\numberwithin{equation}{section}
\newtheorem{Theorem}{Theorem}[section]
\newtheorem{Proposition}[Theorem]{Proposition}
\newtheorem{Lemma}[Theorem]{Lemma}
\newtheorem{Corollary}[Theorem]{Corollary}
\theoremstyle{definition}
\newtheorem{Definition}[Theorem]{Definition}
\newtheorem{Remark}[Theorem]{Remark}
\newcommand{\bTheorem}[1]{
\begin{Theorem} \label{T#1} }
\newcommand{\eT}{\end{Theorem}}
\newcommand{\bProposition}[1]{
\begin{Proposition} \label{P#1}}
\newcommand{\eP}{\end{Proposition}}
\newcommand{\bLemma}[1]{
\begin{Lemma} \label{L#1} }
\newcommand{\eL}{\end{Lemma}}
\newcommand{\bCorollary}[1]{
\begin{Corollary} \label{C#1} }
\newcommand{\eC}{\end{Corollary}}
\newcommand{\bRemark}[1]{
\begin{Remark} \label{R#1} }
\newcommand{\eR}{\end{Remark}}
\newcommand{\bDefinition}[1]{
\begin{Definition} \label{D#1} }
\newcommand{\eD}{\end{Definition}}
\newcommand{\bfA}{\mathbf{A}}
\newcommand{\bfQ}{\mathbf{Q}}
\newcommand{\ds}{\,\mathrm{d}s}
\newcommand{\dt}{\,\mathrm{d}t}
\newcommand{\dx}{\,\mathrm{d}x}
\newcommand{\dxt}{\,\mathrm{d}x\,\mathrm{d}t}
\newcommand{\dxs}{\,\mathrm{d}x\,\mathrm{d}s}
\newcommand{\dif}{\mathrm{d}}
\newcommand{\D}{\mathrm{d}}
\newcommand{\dd}{\mathrm{d}}
\newcommand{\Div}{\mathrm{div}}
\newcommand{\vr}{\varrho}
\newcommand{\bfu}{\mathbf{u}}
\newcommand{\bfw}{\mathbf{w}}
\newcommand{\bfW}{\mathbf{W}}
\newcommand{\bfv}{\mathbf{v}}
\newcommand{\bff}{\mathbf{f}}
\newcommand{\bfV}{\mathbf{V}}
\newcommand{\rr}{\mathbf{r}_t}
\newcommand{\bfq}{\mathbf{q}}
\newcommand{\bfphi}{\boldsymbol{\varphi}}
\newcommand{\bfeta}{\boldsymbol{\eta}}
\newcommand{\bfvarphi}{\boldsymbol{\varphi}}
\newcommand{\bfpsi}{\boldsymbol{\psi}}
\newcommand{\TN}{\mathcal{T}^N}
\newcommand{\mt}{\mathcal{T}^N}
\newcommand{\mf}{\mathfrak{F}}
\newcommand{\prst}{\mathbb P}
\newcommand{\p}{\mathbb P}
\newcommand{\FF}{\mathfrak F}
\newcommand{\pas}{$\mathbb P$-a.s.}
\newcommand{\Grad}{\nabla}
\renewcommand{\div}{{\rm div\,}}
\newcommand{\T}{\mathbb{T}}
\newcommand{\R}{\mathbb{R}}
\newcommand{\N}{\mathbb{N}}
\newcommand\Cbox[2]{%
    \newbox\contentbox%
    \newbox\bkgdbox%
    \setbox\contentbox\hbox to \hsize{%
        \vtop{
            \kern\columnsep
            \hbox to \hsize{%
                \kern\columnsep%
                \advance\hsize by -2\columnsep%
                \setlength{\textwidth}{\hsize}%
                \vbox{
                    \parskip=\baselineskip
                    \parindent=0bp
                    #2
                }%
                \kern\columnsep%
            }%
            \kern\columnsep%
        }%
    }%
    \setbox\bkgdbox\vbox{
        \color{#1}
        \hrule width  \wd\contentbox %
               height \ht\contentbox %
               depth  \dp\contentbox
        \color{black}
    }%
    \wd\bkgdbox=0bp%
    \vbox{\hbox to \hsize{\box\bkgdbox\box\contentbox}}%
    \vskip\baselineskip%
}
\date{}
\begin{document}

\title{Compressible fluids excited by space-dependent transport noise}

\author{Dominic Breit}
\address[D. Breit]{
Institute of Mathematics, TU-Clausthal, Erzstra\ss e1, 38678 Clausthal-Zellerfeld, Germany}
\email{dominic.breit@tu-clausthal.de}

\author{Eduard Feireisl}
\address[E. Feireisl]{Institute of Mathematics AS CR, \v{Z}itn\'a 25, 115 67 Praha 1, Czech Republic}
\email{feireisl@math.cas.cz}
%\thanks{The research of E.F. leading to these results has received funding from
%the Czech Sciences Foundation (GA\v CR), Grant Agreement
%21--02411S. The Institute of Mathematics of the Academy of Sciences of
%the Czech Republic is supported by RVO:67985840. }

\author{Martina Hofmanov\'a}
\address[M. Hofmanov\'a]{Fakult\"at f\"ur Mathematik, Universit\"at Bielefeld, D-33501 Bielefeld, Germany}
\email{hofmanova@math.uni-bielefeld.de}
%\thanks{M.H. has received funding from the European Research Council (ERC) under the European Union's Horizon 2020 research and innovation programme (grant agreement No. 949981). The financial support by the German Science Foundation DFG via the Collaborative Research Center SFB1283 is gratefully acknowledged.}

\author{Piotr B. Mucha}
\address[P.B. Mucha]{Institute of Applied Mathematics and Mechanics, University of Warsaw, ul. Banach 2,
02-097 Warsaw, Poland}
\email{p.mucha@mimuw.edu.pl}
%\thanks{The work of E.Z. was supported by the EPSRC Early Career Fellowship no. EP/V000586/1.}

\begin{abstract}
We study the compressible Navier--Stokes system driven by physically relevant transport noise, where the noise influences both the continuity and momentum equations. 
%Unlike our previous work (Compressible Navier--Stokes system with transport noise, SIAM J. Math. Anal. 54 (2022), 4465--4495), we consider the physically relevant setting where the driving vector fields are space-dependent. \textcolor{green}{EF: Should we say this? It looks like we considered something non--physical in the 
%first paper. We may omit this sentence in abstract.}	 We establish the existence of probabilistically and analytically weak solutions satisfying an energy inequality.
Our approach is based on transforming the system into a partial differential equation with random, time- and space-dependent coefficients. A key challenge arises from the fact that these coefficients are non-differentiable in time, rendering standard compactness arguments for the identification of the pressure inapplicable. To overcome this difficulty, we develop a novel multi-layer approximation scheme and introduce a precise localization strategy with respect to both the sample space and time variable. The limit pressure is then identified via the corresponding effective viscous flux identity.
By means of  stochastic compactness methods, particularly Skorokhod's representation theorem and its generalization by Jakubowski, we ensure the progressive measurability required to return to the original system. 
Our results broaden the applicability of transport noise models in fluid dynamics and offer new insights into the interaction between stochastic effects and compressibility.
%
%
%
%We study the compressible Navier--Stokes system driven by transport noise, where the noise affects both the continuity and momentum equations. Unlike our previous work (Compressible Navier--Stokes system with transport noise, SIAM J. Math. Anal. 54 (2022), 4465--4495), we consider the physically relevant setting where the driving vector fields are space-dependent. We establish the existence of probabilistically and analytically weak solutions satisfying an energy inequality. Our approach relies on transforming the system into a system of partial differential equations with random, time- and space-dependent coefficients. A key challenge arises from the fact that these coefficients are non-differentiable in time, making standard compactness arguments leading to the identification of the pressure inapplicable. To address this, we develop a novel multi-layer approximation scheme and employ a careful localisation strategy with respect to both the sample space and time variable and identify the limit pressure via the effective viscous flux identity. By leveraging stochastic compactness methods, in particular Skorokhod?s representation theorem and its generalization by Jakubowski, we guarantee progressive measurability needed to return to the original system of equations.    Our results extend the applicability of transport noise models in fluid dynamics and provide new insights into the interplay between stochastic effects and compressibility.
\end{abstract}

\subjclass{60H15, 35R60, 76N10,  35Q35}
\keywords{Compressible fluids, stochastic Navier--Stokes system, transport noise}

\maketitle

\date{\today}

\maketitle

%%%OLD
\section{Introduction}

We study the stochastic compressible Navier--Stokes equations, which describe the motion of a compressible viscous fluid. 
To avoid technical problems related to the boundary conditions, we suppose the motion is space periodic 
meaning confined to	an $N$-dimensional flat torus $\TN$ in dimensions $N=2,3$. The system consists of the continuity equation for the fluid density $\varrho:\Omega\times[0,T]\times\TN\to R$ and the momentum equation for the  velocity field $\bfu:\Omega\times[0,T]\times\TN\to R^{N}$ given by
\begin{align}
\partial_t\varrho+\Div (\varrho\bfu)&=\dot{\xi},\label{1.1}\\
\partial_t (\varrho\bfu)+\Div\,\big(\varrho\bfu\otimes\bfu\big)+\nabla p&=\Div\,\mathbb S(\nabla\bfu)+\dot{\bfeta},\label{1.1b}
\end{align}
 supplemented with the initial conditions $(\varrho(0),\varrho\bfu(0))=(\varrho_{0},\bfq_{0})$.
Here $\dot\xi$ and $\dot{\bfeta}$  represent  noise terms that introduce randomness into the system modelled by the variable $\omega\in\Omega$. These noise terms are typically formulated  as time derivatives of a certain Brownian motion on a probability space $(\Omega,\mf,\prst)$, which leads to solutions that are almost surely non-differentiable in time. Consequently, the interpretation of the equations differs from their deterministic counterparts. Additionally, the noise terms  may depend on the solution  itself. In view of the physical interpretation of the equation of continuity \eqref{1.1} as mass conservation, the term $\dot{\xi}$ must 
be in the form of a spatial divergence, such that it complies with the constraint $\int_{\TN}\dot{\xi}\dx=0$.

The stress tensor follows Newton's rheological law
\begin{align}\label{eq:nr}
\mathbb S(\nabla\bfu)=\mu\big(\nabla\bfu+\nabla\bfu^\top \big)+\lambda\Div\bfu\,\mathbb I
\end{align}
with strictly positive viscosity coefficients $\mu$ and $\lambda$. 

The pressure obeys the isentropic pressure law $p=p(\varrho)=a\varrho^\gamma$, where $a > 0$ and $\gamma>1$ stands for the adiabatic exponent.

The deterministic version of these equations, corresponding to  $\dot{\xi}=0$ and $\dot{\bfeta}=0$ or $\dot{\bfeta}= \varrho \bff$ with a given deterministic force $\bff$, has been extensively studied.
The existence of weak solutions satisfying the corresponding energy inequality was first shown by Lions \cite{Li2} under the assumption $\gamma\geq 3N/(N+2)$ and later improved by Feireisl et al. \cite{feireisl1} to the range  $\gamma>N/2$. The borderline 
case  $\gamma = 1$ in two dimensions was solved recently in \cite{PlWe}. Although physically relevant, it seems currently out of reach to further relax this condition. 
The uniqueness of weak solutions remains one of the central open problems in the field. While convex integration techniques have led to non-uniqueness results for weak solutions \cite{BV} in the incompressible setting (though not Leray--Hopf solutions, i.e. not necessarily satisfying the energy inequality), the compressible case has proven significantly more difficult, resisting all attempts so far. Due to recent results of Merle et al. \cite{MRRSz} we know that smooth solutions to the compressible Navier--Stokes system can develop a singular profile.

\bigskip

Introducing stochastic noise into the equations leads to additional mathematical challenges and enriches the range of possible research questions. Depending on its specific form, the noise can have different physical interpretations, which, in turn, dictate the appropriate mathematical formulation.

\subsubsection*{Stochastic forcing} 

One  common way to introduce a stochastic perturbation is in the form of a stochastic forcing given by 
 \begin{align*}
\dot{\xi}=0,\qquad \dot{\bfeta}=\Phi(\varrho,\varrho\bfu)\,\frac{\dd W}{\dt},
\end{align*}
with a (possibly infinite-dimensional) Brownian motion $W$ and an operator $\Phi$ satisfying appropriate growth assumptions. The simplest case is additive noise, where $\Phi(\varrho,\varrho\bfu)=\varrho\Psi$ with $\Psi$ independent of $\varrho$ and $\bfu$. The existence of weak solutions to \eqref{1.1}--\eqref{1.1b} in this case was shown in \cite{feireisl2}, relying  on a semi-deterministic approach. A fully stochastic framework was later developed in
 \cite{BrHo} via the concept of weak martingale solutions, where the probability space itself is part of the solution. This builds the cornerstone of the systematic study in the monograph \cite{BrFeHobook}.

\subsubsection*{Transport noise} 

An alternative form of noise, referred to as transport noise, is given by
\begin{align}\label{eq:transport}
\dot{\xi}=\sum_{k=1}^K\Div(\varrho\bfQ_k) \circ\frac{\dd W_k}{\dt}, \qquad\dot{\bfeta}=\sum_{k=1}^K\Div({\varrho\bfu\otimes\bfQ_k})\circ\frac{\dd W_k}{\dt},
\end{align}
with  given smooth vector fields $\bfQ_k = \bfQ_k(x), x \in \TN$, $k=1,\dots, K$, and the interpretation
$$
[\Div(\varrho\bfu\otimes\bfQ_k)]_{i}=\partial_{x_{j}}(\varrho\bfu^{i}\bfQ_{k,j}),\quad \Div(\varrho\bfQ_k)=\partial_{x_{j}}(\varrho\bfQ_{k,j}).
$$
The stochastic differential is driven by a $K$-dimensional Wiener process $W=(W_{1},\dots,W_{K})$ and is understood in  Stratonovich's sense. Note that under appropriate summability condition, an infinite sum is possible as well.  If the vector fields $\bfQ_k$, $k=1,\dots, K$, are solenoidal,
the noise is energy conservative, ensuring that the energy inequality remains valid pathwise, without the need for taking the expectation as it is the case in the setting of stochastic forcing.

It is worth noting that stochasticity in the  model \eqref{eq:transport} is not an external force but  an intrinsic property of the
system. Unlike systems  with stochastic forcing, whose interpretation as turbulence models may be questionable (see \cite{FeiHof22}), the transport noise in the Navier--Stokes system has a solid physical foundation. The  motivation for considering transport noise is twofold:
\begin{itemize}
\item Holm derived   fluid dynamics models with transport noise from  fundamental  physical principles to account for turbulent effects  (see \cite{HOLM2,HOLM,HOLM1}, particularly \cite{HOLM2} for the compressible case). The determination of  the vector fields $\bfQ_{k}$, $k=1,\dots,K$ is discussed in \cite{Co1,Co2}.
\item Transport noise exhibits  regularisation effects, as shown in \cite{FGP} for the transport equation and more recently in \cite{FL} for the three-dimensional incompressible Navier--Stokes equations.
\end{itemize}

The only known existence result for solutions to \eqref{1.1}--\eqref{eq:transport} is our recent work \cite{BFHZ}, which addresses the special case of constant vector fields $\bfQ_k\in {R}^N$, $k=1,\dots,K$. This restriction significantly simplifies a crucial step in identifying the limit solution. Specifically, the compensated compactness argument leading to Lions' identity,
\begin{align}\label{eq:fluxpsiintro}
\overline{p(\varrho) \varrho } - \overline{p(\varrho)} \varrho = 
(\lambda + 2 \mu) \left( \overline{\varrho {\rm div} {\bf u} } - \varrho  {\rm div} {\bf u} \right),
\end{align}
where bar denotes the weak limits of composed functions, 
remains unaffected by the stochastic terms.
Since the pioneering work \cite{Li2}, this identity has been a basic tool for  proving compactness of the density of approximate solutions. Consequently, if $\bfQ_k\in {R}^N$, $k=1,\dots,K$, are constant, the deterministic approach remains applicable.

In the general case of physically relevant space-dependent vector fields $\bfQ_k$, $k=1,\dots,K$, the noise directly influences the computations.
While this poses no difficulty for It\^o's noise of the stochastic forcing type, as it vanishes in expectation, the situation is markedly different for transport noise \eqref{eq:transport}. Here, the cancellation property no longer holds, making compactness arguments significantly more challenging and far from straightforward.

\smallskip

In this paper, we introduce several new ideas to address this challenge. Our primary approach is a Lagrangian reformulation of \eqref{1.1}--\eqref{eq:transport} using the so-called flow transformation. This method has already proven effective for various problems in stochastic partial differential equations. In the context of fluid dynamics, we particularly highlight \cite{HLP}, where the formalism employed in this work was first introduced.
The core idea is to define a change of coordinates via the stochastic differential equation
\begin{align}\label{eq:flowsdeintro}
\bfphi(t,x) = x - \sum_{k=1}^K \int_0^t \bfQ_k(\bfphi(x,s)) \circ \D W_k.
\end{align}
This equation admits a unique probabilistically strong solution, generating a flow of stochastic diffeomorphisms. If the vector fields $\bfQ_k$, $k=1,\dots,K$, are solenoidal, these diffeomorphisms are measure preserving. Moreover, if the vector fields are smooth, the diffeomorphisms belong to $C^{\infty}$.

Setting $\bfv(t,x):=\bfu(t,\bfphi(t,x))$ and $\eta(t,x):=\varrho(t,\bfphi(t,x))$, the system \eqref{1.1}--\eqref{1.1b} transforms into a system of  partial differential equations with random, time- and space-dependent coefficients
\begin{align}
\partial_t \eta + \Div^{\bfphi} (\eta\bfv)  &=0,\label{1.2a}\\
\partial_t (\eta\bfv) + \Div^{\bfphi}\big(\eta\bfv\otimes\bfv\big)  + \nabla^{\bfphi} p(\eta)  &=\Div^{\bfphi}\mathbb S(\nabla^{\bfphi}\bfv).\label{1.2b}
\end{align}
Here the new differential operators with superscript $\bfphi$ are defined as
\begin{align*}
\Div^{\bfphi} \bfpsi=[\Div(\bfpsi\circ\bfphi^{-1})]\circ\bfphi,\quad \nabla^{\bfphi} \psi=[\nabla(\psi\circ\bfphi^{-1})]\circ\bfphi ,
\end{align*}
with further details provided in  Section~\ref{s:2.3}.

We establish  a one-to-one correspondence between weak solutions of the system \eqref{1.1}--\eqref{1.1b} with \eqref{eq:transport} and weak solutions to the transformed system  \eqref{1.2a}--\eqref{1.2b}, provided the solutions are progressively measurable with respect to a filtration $(\mf_{t})_{t\geq0}$ for which  $W=(W_{1},\dots,W_{K})$ is an $(\mf_{t})_{t\geq0}$-Brownian motion. Notably,  in our construction, this filtration is not the canonical filtration generated by the Brownian motion alone, but rather a joint filtration generated by both the Brownian motion and the solution --  an important consideration in the absence of uniqueness. Progressive measurability is essential for the  definition of the stochastic integral in \eqref{1.1}--\eqref{1.1b} with \eqref{eq:transport}. Consequently, solving  the transformed system \eqref{1.2a}--\eqref{1.2b} using purely  deterministic arguments, as  is often possible in other applications of the flow transformation, is insufficient. Instead, we  employ stochastic compactness techniques, leveraging  Skorokhod's representation theorem, or more precisely,   its generalization by Jakubowski \cite[Section~2.8]{BrFeHobook}. This approach ensures the progressive  measurability of solutions, ultimately allowing us to return to  the original system \eqref{1.1}--\eqref{1.1b}.

To solve the transformed system \eqref{1.2a}--\eqref{1.2b}, we develop a novel multi-layer approximation scheme that incorporates several regularizations. Among these are carefully chosen differential operators that remain untransformed by the flow $\bfphi$. These additional terms are systematically eliminated in successive limit passages. As noted earlier, our approach relies on stochastic compactness. 
A key challenge arises from the limited time regularity of the coefficients in \eqref{1.2a}--\eqref{1.2b}. In particular, the system contains coefficients that depend on $\bfphi$, and thus on both $\bfQ_k$ and $W_k$, for $k=1,\dots,K$. On the positive side, these coefficients are spatially smooth when the vector fields $\bfQ_k$ are smooth. However, they exhibit only H\"older continuity in time, and this regularity is not uniform across the sample space $\Omega$. This lack of uniformity further complicates the compactness argument for the density and necessitates the development of new techniques. In particular, there is a need for an extension of techniques that allow to prove extra integrability of the pressure as well as Lions' effective viscous flux identity for rough operators in time.

We first exploit the fact that, due to the divergence-free property of the vector fields $\bfQ_k$, $k=1,\dots,K$, the noise is energy conservative. Consequently, the standard energy inequality provides uniform bounds with respect to the sample space. This allows us to perform the first limit passage in each approximate system, identifying all limit terms except for the pressure. Indeed, the energy estimates alone do not guarantee the strong convergence of the density. To address this, we require higher integrability of the pressure in conjunction with an analysis of the effective viscous flux.
In the transformed system, the effective viscous flux integral takes the form
\begin{align*}
\int_0^{t}\int_{\mt} \big(p(\eta)-(\lambda+2\mu)\Div^{\bfphi} {\bfv}\big),\eta\dx\ds.
\end{align*}
The corresponding compactness argument is based on the use of operators such as $\nabla^{\bfphi}(-\Delta^{\bfphi})^{-1}$, which are time-dependent and non-differentiable in time due to the stochastic nature of $W$. In particular, unlike in standard deterministic approaches, we are unable to compute $\partial_t \nabla^{\bfphi}(-\Delta^{\bfphi})^{-1}$.

To overcome this difficulty, we employ a random splitting of the time interval into short subintervals, freezing the coefficients in time. Working pathwise, we introduce several random quantities that allow us to localize in time and handle the lack of time regularity in the flow uniformly in $\omega$. Unlike in previous studies on stochastic compressible Navier--Stokes equations, the resulting pressure bounds hold pathwise and are therefore $\omega$-dependent. While this initially yields only a converging subsequence that may depend on $\omega$, we make use of the fact that a candidate limit solution was already obtained almost surely via Skorokhod's representation theorem. Thus, no further subsequence extraction is necessary. This pathwise approach suffices to establish the effective viscous flux identity and to identify the limit pressure as required.

The structure of the paper is as follows. In Section~\ref{M} we formulate the problem, introduce the flow transformation and state our main result, Theorem~\ref{thm:main}. In Section~\ref{s:3}, we introduce the approximate system and establish existence and uniqueness of its solutions. The remainder of the paper is devoted to the successive passages to the limit, completing the proof of Theorem~\ref{thm:main}. Specifically, in Section~\ref{s:4} we perform the vanishing viscosity limit and in Section~\ref{s:5} we remove a regularisation parameter as well as the artificial pressure.

\section{Problem formulation and main results}
\label{M}

\subsection{Stratonovich integration}
Let $(\Omega,\mathfrak F,(\mathfrak F_t)_{t\geq0},\mathbb P)$ be a filtered probability space and let $W=(W_k)_{k=1}^K$ be a collection of independent standard $(\mathfrak F_t)$-Wiener processes.
Set $\mathbb Q=(\bfQ_k)_{k=1}^K$.
We define the Stratonovich integrals in \eqref{eq:transport} by means of the It\^{o}--Stratonovich correction.
First of all we define the stochastic integrals
\begin{align*}
&\int_0^t\Div (\vr \mathbb{Q} ) \,\D W=\sum_{k\in I}\int_0^t\Div (\vr \bfQ_k ) \,\D W_k,\\
&\int_0^t\Div (\varrho\bfu\otimes \mathbb{Q} ) \,\D W=\sum_{k=1}^K\int_0^t\Div (\varrho\bfu\otimes \bfQ_k ) \,\D W_k,
\end{align*}
as It\^{o}-integrals on the Hilbert spaces $W^{-\ell,2}(\TN)$ and $W^{-\ell,2}(\TN,R^N)$, $\ell>N/2+1$, respectively. Indeed, if $\vr$ and $\varrho\bfu$
are $(\mathfrak F_t)$-adapted stochastic processes taking values in 
$C_{\rm{weak}}([0,T];L^{q}(\TN))$ and $C_{\rm{weak}}([0,T];L^{q}(\TN,R^N))$, $q > 1$, respectively, and the vector fields  $\bfQ_{k}$ are bounded, then
the It\^{o}-integrals 
\begin{align*}
&\int_0^t\langle\Div (\vr \mathbb{Q} ),\psi\rangle \,\D W=-\sum_{k=1}^K\int_0^t\int_{\TN}\vr \bfQ_k \cdot\nabla_x\psi\dx \,\D W_k,\quad\psi\in W^{\ell,2}(\TN),\\
&\int_0^t\langle\Div (\varrho\bfu\otimes \mathbb{Q} ),\bfpsi\rangle \,\D W=-\sum_{k=1}^K\int_0^t\int_{\TN}\varrho\bfu\otimes \bfQ_k :\nabla_x\bfpsi\dx \,\D W_k,\quad\bfpsi\in W^{\ell,2}(\TN,R^N),
\end{align*}
are well-defined. The corresponding Stratonovich integrals are now defined via the It\^{o}--Stratonovich correction, that is
\begin{align*}
\int_0^t\int_{\TN}\vr \bfQ_k \cdot\nabla_x\psi\dx \,\circ\D W_k&=\int_0^t\int_{\TN}\vr \bfQ_k \cdot\nabla_x\psi\dx \,\D W_k\\&+\frac{1}{2}\Big\langle\Big\langle\int_{\TN}\vr \bfQ_k \cdot\nabla_x\psi\dx,W_k\Big\rangle\Big\rangle_t,\\
\int_0^t\int_{\TN}\varrho\bfu\otimes \bfQ_k :\nabla_x\bfpsi\dx \,\circ\D W_k&=\int_0^t\int_{\TN}\varrho\bfu\otimes \bfQ_k :\nabla_x\bfpsi\dx \,\D W_k\\&+\frac{1}{2}\Big\langle\Big\langle\int_{\TN}\varrho\bfu\otimes \bfQ_k :\nabla_x\bfpsi\dx,W_k\Big\rangle\Big\rangle_t,
\end{align*}
where $\langle\langle\cdot,\cdot\rangle\rangle_t$ denotes the cross variation. We compute now the cross variations by means of \eqref{1.1}--\eqref{1.1b} with \eqref{eq:transport}. Spelling out only the martingale part, we have
\begin{align*}
\left[\int_{\TN}\vr \bfQ_k \cdot\nabla_x\psi\dx\right](t)&=\dots-\sum_{\ell}\int_0^t\int_{\TN}\vr \bfQ_\ell \cdot\nabla_x(\bfQ_k \cdot\nabla_x\psi)\dx \,\D W_\ell,\\
\left[\int_{\TN}\varrho\bfu\otimes \bfQ_k :\nabla_{x}\bfpsi\dx\right](t)&=\dots-\sum_{\ell}\int_0^t\int_{\TN}\varrho\bfu\cdot (\bfQ_\ell \cdot\nabla_{x}(\bfQ_k\cdot\nabla_{x}\bfpsi))\dx \,\D W_\ell,
\end{align*}
where $\psi\in W^{\ell+1,2}(\TN)$ and $\bfpsi\in W^{\ell+1,2}(\TN,R^N)$.
Here the deterministic terms of the equations with quadratic variation zero are irrelevant and hidden. Plugging the previous considerations together we set
\begin{align*}
&\int_0^t\Div_x (\vr \mathbb{Q} ) \,\circ\D W=\sum_{k=1}^K\int_0^t\Div_x (\vr \bfQ_k ) \,\D W_k+\frac{1}{2}\sum_{k=1}^K\int_0^t\Div_x(\bfQ_k \otimes\bfQ_k\nabla_{x}\varrho) \ds,\\
&\int_0^t\Div_x (\varrho\bfu\otimes \mathbb{Q} ) \,\circ\D W=\sum_{k=1}^K\int_0^t\Div_x (\varrho\bfu\otimes \bfQ_k ) \,\D W_k+\frac{1}{2}\sum_{k=1}^K\int_0^t\Div_x(\bfQ_k\otimes\bfQ_k\nabla_{x}(\varrho\bfu))\ds,
\end{align*}
to be understood in $W^{-\ell-1,2}(\TN)$ and $W^{-\ell-1,2}(\TN,R^N)$, respectively.

%Let $\bas$ be a  probability space with a complete right-continuous filtration $(\mathfrak{F}_t)_{t\geq0}$.
%For the sake of simplicity, we restrict ourselves to the case of a single noise, specifically,
%\begin{equation} \label{M1}
%\vr \vc{G} (\vr, \vr \vu) \D W = \vr \vc{G}(x) \D \beta \quad \mbox{or}\quad \vr \vc{G} (\vr, \vr \vu) \D W =\vr \vu \D \beta,
%\end{equation}
%where $\beta = \beta(t)$ is a standard Wiener process relative to the filtration $(\mathfrak{F}_t)_{t\geq0}$. In particular, we may correctly define the stochastic integral (in It\^{o}'s sense)
%\[
%\int_0^\tau \left( \intTN{ \vr \vc{G}(\vr, \vr \vu) \cdot \bfphi } \right) \D W
%\]
%as soon as the processes
%\begin{equation} \label{MMM1-}
%t  \mapsto \intTN{ \vr \phi }, \ t \mapsto \intTN{ \vr \vu \cdot \bfphi }
%\end{equation}
%are $(\mathfrak{F}_t)$-progressively measurable for any smooth (deterministic) test functions $\phi = \phi(x)$ and $\bfphi = \bfphi(x)$.

\subsection{The concept of solution}

We consider the equations on a  time interval $I=[0,T]$ for some $T>0$. For the sake of simplicity, we restrict ourselves to 
deterministic initial data.

\begin{Definition}[Dissipative martingale solution]\label{MD1}
Let $(\varrho_0,\bfq_0)\in L^\gamma(\mt)\times L^{\frac{2\gamma}{\gamma+1}}(\mt)$ with $\varrho_0\geq0$ a.a. be deterministic initial data.
The quantity  $$\big((\Omega,\mf,(\mf_t)_{t\geq0},\prst),\varrho,\bfu,W\big)$$
is called a {\em dissipative martingale solution} to \eqref{1.1}--\eqref{eq:transport} with initial data $(\varrho_0,\bfq_0)$, provided the following holds:
\begin{enumerate}[(a)]
\item $(\Omega,\mf,(\mf_t)_{t\geq 0},\prst)$ is a stochastic basis with a complete right-continuous filtration;
\item $W$ is a $K$-dimensional $(\mf_t)$-Wiener process;
\item the density $\vr$ is non-negative, belongs to the space
$
C_{\rm{weak}}( I; L^{\gamma}(\mt))$
$ \mathbb{P}\mbox{-a.s.}$, and is $(\mf_t)$-adapted;

\item the momentum $\vr\bfu$
belongs to the space
$
 C_{\rm{weak}}( I; L^{\frac{2\gamma}{\gamma+1}}(\mt,R^{N}))
$
\pas\ and is $(\mf_t)$-adapted;
\item the velocity $\bfu$ belongs to $
 L^2(I; W^{1,2}(\mt,R^N))
$
\pas\ and is $(\mathfrak{F}_{t})$-adapted\footnote{Adaptedness of the velocity  is understood in the sense of random distributions, cf. \cite[Chapter~2.8]{BrFeHobook}.};
%\item $\mathfrak t \geq 0$ is an $(\mathfrak{F}_t)$-stopping time;
\item we have $(\varrho(0),(\varrho\bfu)(0))=(\varrho_0,\bfq_0)$ $\mathbb P$-a.s.;
\item the continuity equation holds in the sense that
\begin{align}\label{eq:condW2}
\begin{aligned}
\int_{\mt}\varrho\psi\dx\bigg|_{s=0}^{s=t}&-
\int_0^t\int_{\mt}\varrho \bfu\cdot\nabla_{x}\psi\dxs\\&=
-\int_0^t\int_{\mt}\varrho \mathbb Q\cdot\nabla_x \psi\,\dx\,\dd W\\
&+\frac{1}{2}\sum_{k=1}^K\int_0^t\int_{\mt}\vr\,\Div_x(\bfQ_k \otimes\bfQ_k\nabla_{x}\psi)\dx \ds
\end{aligned}
\end{align}
$\mathbb P$-a.s.  for all $\psi\in C^\infty(\TN)$;
\item the momentum equation holds in the sense that\begin{align}\label{eq:momdW2}
\begin{aligned}
\int_{\mt} \varrho \bfu\cdot \bfpsi\dx\bigg|_{s=0}^{s=t}& -\int_0^t\int_{\mt}\varrho \bfu\otimes \bfu:\nabla_x \bfpsi\dxs
\\
&+\int_0^t\int_{\mt}\mathbb S(\nabla_x \bfu):\nabla_x \bfpsi \dxs-\int_0^t\int_{\mt}
p(\varrho)\,\Div_x \bfpsi \dxs
\\&=-\int_0^t\int_{\mt}\varrho \bfu\otimes\mathbb Q:\nabla_x \bfpsi\,\dx\,\dif W\\&+\sum_{k=1}^K\int_0^t\int_{\mt}\vr\bfu\cdot\Div_x(\bfQ_k\otimes\bfQ_k\nabla_{x}\bfpsi)\dx \ds
\end{aligned}
\end{align} 
$\mathbb P$-a.s. for all $\bfpsi\in C^\infty(\mt,R^{N})$;
\item the energy inequality is satisfied in the sense that
\begin{align} \label{eq:enedW2}
\begin{aligned}
- \int_I &\partial_t \psi \,
\mathscr E \dt+\int_I\psi\int_{\mt}\mathbb S(\nabla_x \bfu):\nabla_x \bfu\dxt\leq
\psi(0) \mathscr E(0)
\end{aligned}
\end{align}
holds $\mathbb P$-a.s. for any $\psi \in C^\infty_c([0, T))$.
Here, we abbreviated
$$\mathscr E(t)= \int_{\TN}\Big(\frac{1}{2} \varrho(t) | {\bfu}(t) |^2 + P(\varrho(t))\Big)\dx,$$
where the pressure potential is given by $P(\varrho)=\frac{a}{\gamma-1}\varrho^\gamma$.
\end{enumerate}

\end{Definition}

\begin{Remark}

It is worth noting that, unlike the field equations \eqref{eq:condW2}, \eqref{eq:momdW2}, the energy inequality 
\eqref{eq:enedW2} does not contain any stochastic integral yielding pathwise uniform bounds in terms of the initial data.
	
	\end{Remark}

\subsection{Flow transformation}\label{s:2.3}
We consider the flow generated by the noise which is given by
\begin{align}\label{eq:flowsde}
\bfphi(x,t)=x-\sum_{k=1}^K\int_0^t\bfQ_k(\bfphi(x,s))\circ \D W_k.
\end{align}
There exists a unique probabilistically strong solution to the SDE \eqref{eq:flowsde} taking values in the class of measure  preserving (since the $\bfQ_k$ are solenoidal) and smooth (as the $\bfQ_k$ are smooth) diffeomorphisms on the torus $\TN$, see \cite{Ku}. We denote by $\bfphi^{-1}$ the inverse flow.
In particular,  it holds
\begin{align}\label{eq:3010a}
\mathbb{E}[\|\bfphi\|^{2}_{C([0,T];C^{1}(\mt,\TN))}]<\infty
\end{align}
and for a.e. $\omega\in\Omega$ and $\alpha\in(0,1/2)$
\begin{align}\label{eq:3010}
\|\bfphi(\omega)\|_{C^{\alpha}([0,T];C^{2}(\mt,\TN))}+\|\bfphi^{-1}(\omega)\|_{C^{\alpha}([0,T];C^{2}(\mt,\TN))}<\infty,
\end{align}
which are the regularity requirements needed in our analysis. The transformation (\ref{eq:flowsde}) is nothing but the stochastic analogue of the Euler--Lagrange coordinate transformation under the flow $\bfQ_k$. It is particularly important in studies of free boundary problems and regular solutions in the critical framework. However, the flow needs to be sufficiently regular in space in order to control the Jacobian of the transformation. In our case, since $\bfQ_k$ is divergence-free, the quantity is simply equal to one \cite{DanMu,Mu}.

With the flow at hand we reformulate equations \eqref{1.1}--\eqref{1.1b} as random PDEs in terms of $\bfv$ and $\eta$ given by
\begin{align*}
\bfv(t,x)=\bfu(t,\bfphi(t,x)),\quad \eta(t,x)=\varrho(t,\bfphi(t,x)).
\end{align*}
Here the composition affects only the spatial variable.
In the new variables we obtain the equations
\begin{align} \label{i1B}
\partial_t \eta + \Div^{\bfphi} (\eta\bfv)  &=0,\\
\label{i2B}
\partial_t (\eta\bfv) + \Div\big(\eta\bfv\otimes\bfv\big) -\Div^{\bfphi}\mathbb S(\nabla^{\bfphi}\bfv) + \nabla^{\bfphi} p(\eta)  &=0,
\end{align}
where the corresponding (random) differential operators are defined as
\begin{align*}
\Div^{\bfphi} \bfpsi=[\Div(\bfpsi\circ\bfphi^{-1})]\circ\bfphi,\quad \nabla^{\bfphi} \psi=[\nabla(\psi\circ\bfphi^{-1})]\circ\bfphi .
\end{align*}
More specifically, the problem 
	\eqref{i1B}, \eqref{i2B}	
	can be written in the form:
\begin{align} 
	\partial_t \eta + \partial_{x_j} \left( \eta A_{j,k} v_k \right) &= 0, \label{ER1} \\ 	
	\partial_t (\eta v_i) + \partial_{x_j} \left( \eta A_{j,k} v_k v_i \right) + A_{l,i} \frac{ \partial }{x_l} p(\eta) &= 
	\mu \frac{\partial }{\partial x_j} \left( A_{j,l} A_{k,l} \frac{\partial v_i }{\partial x_k} \right) + 
	\lambda A_{l,i} \frac{\partial }{\partial x_l} \frac{\partial }{\partial x_j} \left( A_{j,k} v
	_k \right) 
	\label{ER2} \\
	i &=1, \dots, N \nonumber,
\end{align}
where we have denoted
\[
A_{i,j}(t,x) = \frac{ \partial (\varphi^{-1}_i) }{\partial x_j} (t, \bfphi(t,x)). 
\]

We note that
$$
\partial_{i}\bfv^{j}=\partial_{i}(\bfv^{j}\circ\bfphi^{-1}\circ\bfphi) =\partial_{k}^{\bfphi}\bfv^{j}\,\partial_{i}\bfphi^{k}
$$
with the summation over repeated indices. Consequently
\begin{equation}\label{eq:w12}
\|\nabla \bfv\|_{L^{2}(\mt)} \leq \|\nabla^{\bfphi} \bfv\|_{L^{2}(\mt)} \|\nabla\bfphi\|_{L^{\infty}(\mt)},
\end{equation}
which, in view of \eqref{eq:3010a}, can be used to derive a bound for $\bfv$ in $W^{1,2}(\TN,R^{N})$ based on the energy inequality \eqref{eq:enedWB} below.

In the following definition we formulate a notion of solution to the transformed system \eqref{i1B}--\eqref{i2B} in a way that permits to establish an equivalence with Definition \ref{MD1}, see Lemma~\ref{l:equiv} below.  Note, that it is essential to keep track of the noise even in the random PDEs \eqref{i1B}--\eqref{i2B} in order  to guarantee that the transformed density $\eta$ as well as the transformed momentum $\eta\bfv$ are $(\mf_{t})$-adapted.

\begin{Definition}\label{MD1B}
Let $(\varrho_0,\bfq_0)\in L^\gamma(\mt)\times L^{\frac{2\gamma}{\gamma+1}}(\mt)$ with $\varrho_0\geq0$ a.a. be deterministic initial data.
The quantity  $$\big((\Omega,\mf,(\mf_t)_{t\geq0},\prst),\eta,\bfv,W\big)$$
is called a {\em dissipative martingale solution} to \eqref{i1B}--\eqref{i2B} with initial data $(\varrho_0,\bfq_0)$, provided the following holds:
\begin{enumerate}[(a)]
\item $(\Omega,\mf,(\mf_t)_{t\geq 0},\prst)$ is a stochastic basis with a complete right-continuous filtration;
\item $W$ is a $K$-dimensional $(\mf_t)$-Wiener process;
\item the density $\eta$ belongs to the space
$
C_{\rm{weak}}( I; L^{\gamma}(\mt))$
$ \mathbb{P}\mbox{-a.s.}$, is non-negative and is $(\mf_t)$-adapted;

\item the momentum $\eta\bfv$
belongs to the space
$
 C_{\rm{weak}}( I; L^{\frac{2\gamma}{\gamma+1}}(\mt,R^{N}))
$
\pas\ and is $(\mf_t)$-adapted;
\item the velocity $\bfv$ belongs to $
 L^2(I; W^{1,2}(\mt,R^N))
$
\pas\ and is $(\mathfrak{F}_{t})$-adapted;\footnote{Adaptedness of the velocity  is understood in the sense of random distributions, cf. \cite[Chapter~2.8]{BrFeHobook}.}
%\item $\mathfrak t \geq 0$ is an $(\mathfrak{F}_t)$-stopping time;
\item we have $(\eta(0),(\eta\bfv)(0))=(\varrho_0,\bfq_0)$ $\mathbb P$-a.s.;
\item the continuity equation holds in the sense that
\begin{align*}%\label{eq:condWB}
\begin{aligned}
\int_{\mt}\eta\psi\dx\bigg|_{s=0}^{s=t}&-
\int_0^t\int_{\mt}\eta \bfv\cdot\nabla^{\bfphi}\psi\dxs=0
\end{aligned}
\end{align*}
$\mathbb P$-a.s.  for all $\psi\in C^\infty(\TN)$, where the flow map $\bfphi$ is determined by \eqref{eq:flowsde};
\item the momentum equation holds in the sense that
\begin{align*}%\label{eq:momdWB}
\begin{aligned}
\int_{\mt} \eta \bfv\cdot \bfpsi\dx\bigg|_{s=0}^{s=t} &-\int_0^t\int_{\mt}\eta \bfv\otimes \bfv:\nabla^{\bfphi} \bfpsi\dxs
\\
&+\int_0^t\int_{\mt}\mathbb S(\nabla^{\bfphi} \bfv):\nabla^{\bfphi} \bfpsi \dxs-\int_0^t\int_{\mt}
p(\eta)\,\Div^{\bfphi} \bfpsi \dxs
=0
\end{aligned}
\end{align*} 
$\mathbb P$-a.s. for all $\bfpsi\in C^\infty(\mt,R^{N})$, where the flow map $\bfphi$ is determined by \eqref{eq:flowsde};
\item the energy inequality is satisfied in the sense that
\begin{align} \label{eq:enedWB}
\begin{aligned}
- \int_I &\partial_t \psi \,
\mathscr E^{\bfphi} \dt+\int_I\psi\int_{\mt}\mathbb S(\nabla^{\bfphi} \bfv):\nabla^{\bfphi} \bfv\dxt\leq
\psi(0) \mathscr E^{\bfphi}(0)
\end{aligned}
\end{align}
holds $\mathbb P$-a.s. for any $\psi \in C^\infty_c([0, T))$, where the transformation $\bfphi$ is given by \eqref{eq:flowsde}.
Here, we abbreviated
$$\mathscr E^{\bfphi}(t)= \int_{\TN}\Big(\frac{1}{2} \eta(t) | {\bfv}(t) |^2 + P(\eta(t))\Big)\dx,$$
where the pressure potential is given by $P(\eta)=\frac{a}{\gamma-1}\eta^\gamma$.
\end{enumerate}

\end{Definition}

As mentioned above, the following result permits to switch between solutions to the original stochastic system \eqref{1.1}--\eqref{eq:transport} and the transformed random PDEs  \eqref{i1B}--\eqref{i2B}.

\begin{Lemma}\label{l:equiv}
The quantity  $((\Omega,\mf,(\mf_t)_{t\geq0},\prst),\varrho,\bfu,W)$
is  a  dissipative martingale solution to \eqref{1.1}--\eqref{eq:transport} with initial data $(\varrho_0,\bfq_0)$ if and only if
$((\Omega,\mf,(\mf_t)_{t\geq0},\prst),\eta,\bfv,W))$, where $\eta=\varrho\circ\bfphi$, $\bfv=\bfu\circ\bfphi$ and $\bfphi$ solves \eqref{eq:flowsde}, is  a  dissipative martingale solution to \eqref{i1B}--\eqref{i2B} with initial data $(\varrho_0,\bfq_0)$.
\end{Lemma}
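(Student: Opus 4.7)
The equivalence rests on two ingredients: the smooth, measure-preserving nature of the flow $\bfphi$ generated by \eqref{eq:flowsde}, and a Kunita--It\^o--Wentzell-type change of variables for composing the semimartingales $\varrho$, $\varrho\bfu$ with the random diffeomorphism $\bfphi(t,\cdot)$. The sign in \eqref{eq:flowsde} is chosen precisely so that the Stratonovich contribution produced by $\bfphi$ under the chain rule exactly cancels the transport noise on the right-hand side of \eqref{eq:condW2}--\eqref{eq:momdW2}, leaving the deterministic-looking system \eqref{i1B}--\eqref{i2B}.

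For the forward implication, fix $\psi\in C^\infty(\mt)$ and set $\Psi(t,y):=\psi(\bfphi^{-1}(t,y))$. Differentiating the identity $\bfphi(t,\bfphi^{-1}(t,y))=y$ in the Stratonovich sense and inserting \eqref{eq:flowsde} yields
\[
\circ\,\dd\bfphi^{-1}(t,y)=\nabla\bfphi^{-1}(t,y)\sum_{k=1}^{K}\bfQ_k(y)\circ\dd W_k,
\]
and hence, by the chain rule, $\circ\,\dd\Psi=\sum_k\bfQ_k\cdot\nabla\Psi\circ\dd W_k$. Writing \eqref{eq:condW2} in its strong Stratonovich form (which absorbs the It\^o--Stratonovich correction that appears as the last term in \eqref{eq:condW2}) and applying the Stratonovich product rule to $\langle\varrho(t),\Psi(t)\rangle$, the two stochastic contributions---one coming from the equation for $\varrho$ after integration by parts, the other from $\Psi$---are equal in magnitude and opposite in sign, by the divergence-freeness of each $\bfQ_k$, and cancel. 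What remains is
\[
\langle\varrho(t),\Psi(t)\rangle\Big|_{0}^{t}=\int_0^t\langle\varrho\bfu,\nabla\Psi\rangle\ds.
\]
Since $\bfphi(s,\cdot)$ is measure-preserving, the substitution $y=\bfphi(s,x)$ converts $\langle\varrho,\Psi\rangle$ into $\langle\eta,\psi\rangle$ and $\langle\varrho\bfu,\nabla\Psi\rangle$ into $\langle\eta\bfv,\nabla^{\bfphi}\psi\rangle$, which is precisely item (g) of Definition~\ref{MD1B}. An analogous componentwise computation, run with the vector test $\bfPsi(t,y):=\bfpsi(\bfphi^{-1}(t,y))$ applied to \eqref{eq:momdW2}, together with the pointwise identities $(\nabla\bfu)\circ\bfphi=\nabla^{\bfphi}\bfv$ and $(\nabla p(\varrho))\circ\bfphi=\nabla^{\bfphi}p(\eta)$, yields item (h).

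The energy inequality transforms pathwise: since $\bfphi(t,\cdot)$ is measure-preserving for $\prst$-a.e.\ $\omega$, the substitution $y=\bfphi(t,x)$ gives $\mathscr E(t)=\mathscr E^{\bfphi}(t)$ and $\int_{\mt}\mathbb S(\nabla\bfu):\nabla\bfu\dx=\int_{\mt}\mathbb S(\nabla^{\bfphi}\bfv):\nabla^{\bfphi}\bfv\dx$ for every $t$, so \eqref{eq:enedW2} and \eqref{eq:enedWB} coincide. The path regularity of $\eta=\varrho\circ\bfphi$ and $\eta\bfv$ in the stated weak-$L^{p}$ classes is inherited from that of $\varrho,\varrho\bfu$ through \eqref{eq:3010} together with measure preservation, and $\bfv\in L^2(I;W^{1,2}(\mt,R^N))$ follows from \eqref{eq:w12} and its counterpart with $\bfphi$ replaced by $\bfphi^{-1}$. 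Adaptedness transfers because the probabilistically strong solution $\bfphi$ to \eqref{eq:flowsde} is $(\mf_t)$-adapted. The converse implication runs symmetrically: given $(\eta,\bfv)$ satisfying \eqref{i1B}--\eqref{i2B}, set $\varrho:=\eta\circ\bfphi^{-1}$, $\bfu:=\bfv\circ\bfphi^{-1}$, and repeat the above calculus in reverse; the Stratonovich variation of $\bfphi^{-1}$ is now precisely what generates the transport noise in \eqref{eq:condW2}--\eqref{eq:momdW2}.

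The main technical obstacle is the limited regularity: with $\varrho\in C_{\rm weak}(I;L^\gamma)$ and $\bfu\in L^2(I;W^{1,2})$, the semimartingale composition underlying the Kunita--It\^o--Wentzell identity is not classical. I would therefore perform the above argument after a spatial mollification $\varrho_\varepsilon:=\varrho\ast\chi_\varepsilon$, $(\varrho\bfu)_\varepsilon:=(\varrho\bfu)\ast\chi_\varepsilon$, derive the transformed equation for the mollified quantities, and pass to the limit $\varepsilon\to0$ via DiPerna--Lions-type commutator estimates; the smoothness of the $\bfQ_k$ in $x$ together with the $C^{\alpha}([0,T];C^{2}(\mt,\TN))$ bound \eqref{eq:3010} on $\bfphi$ are exactly what is needed to close these commutator estimates in the present integrability range.
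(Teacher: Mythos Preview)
Your argument is correct and follows essentially the same route as the paper: both apply the Stratonovich product rule to the pairing $\langle\varrho,\psi(\bfphi^{-1})\rangle$ (using the equation for $\psi(\bfphi^{-1})$ from \cite{HLP}), observe that the stochastic contributions cancel, and then invoke measure preservation of $\bfphi$ for the energy and regularity claims. The only minor difference is in handling the low regularity of $\varrho$---the paper works directly in the $\mathcal{D}'$--$C^\infty$ duality, citing the It\^o product rule of \cite[Proposition~2.4.2]{BrFeHobook}, whereas you propose a spatial mollification with commutator estimates; both are valid and standard.
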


\begin{proof}
First, we observe that since the flow $\bfphi$ as well as its inverse $\bfphi^{-1}$ is $(\mf_t)$-adapted, $(\varrho,\bfu)$ is $(\mf_t)$-adapted if and only if $(\eta,\bfv)$ is $(\mf_t)$-adapted.

The equivalence between the respective  weak formulations of the continuity and momentum equations follows from similar arguments as in Proposition A.1 in \cite{HLP}. The difference lies in the fact that unlike \cite{HLP} we work with deterministic test functions. For instance, if $\varrho$ solves the continuity equation in the sense of \eqref{eq:condW2}, then the evolution of  $\eta$ is obtained from the application of It\^o's product rule in the spirit of Proposition 2.4.2 in \cite{BrFeHobook} applied  to
$$
\dif \int_{\T^{N}} \eta \psi \dx = 
\dif \int_{\T^{N}} \varrho \psi (\bfphi^{- 1}) \dx = \int_{\T^{N}} \dif \varrho \psi (\bfphi^{- 1}) \dx+ \int_{\T^{N}}  \varrho \dif\psi (\bfphi^{- 1}) \dx,
$$
for an arbitrary $\psi\in C^{\infty}(\T^{N})$. Specifically, we work with the duality between $\mathcal{D}'$ and $C^{\infty}$ instead of the
inner product in $L^2$ as done in \cite{BrFeHobook}, while  relying on the smoothness of
the flow for this step. Note that there is no cross variation term because we
consider Stratonovich's integrals. Then we shall use the equation satisfied by $\psi(\bfphi^{-1})$ as formulated in \cite{HLP} to see that the stochastic integrals cancel out. 
For the converse implication, namely, to obtain \eqref{eq:condW2} from the equation for $\eta$, we apply  It\^o's product rule to 
$$
\dif \int_{\T^{N}} \varrho \psi\dx = \dif\int_{\T^{N}} \eta \psi(\bfphi)\dx
$$
and use the equation for $\psi(\bfphi)$ to make the stochastic integral appear.

%This is a consequence of a more general result, which can be formulated as follows: consider
%\begin{equation}\label{eq:u}
%\dif U =F(U)\dif t + \mathbb{Q}\cdot\nabla U \dif W,
%\end{equation}
%for a certain possibly unbounded operator $F$. Setting $V=U\circ\bfphi$, we obtain the transformed variant
%$$
%\partial_{t}V= F^{\bfphi}(V),
%$$
%with $F^{\bfphi}(V)=[F(V\circ \bfphi^{-1})]\circ\bfphi$. 

%If $\varrho$ solves the continuity equation in the sense of \eqref{eq:condW2}, then the evolution of  $\eta$ is obtained from the application of It\^o's product rule in the spirit of Proposition 2.4.2 in \cite{BrFeHobook}. Specifically, we work with the duality between $\mathcal{D}$ and $C^{\infty}$ instead of the
%inner product in $L^2$ as done in the book and we rely on the smoothness of
%the flow for this step. Note that there is no cross variation term because we
%consider Stratonovich's integrals. For every $\psi\in C^{\infty}(\T^{N})$, we obtain
%\[ \dif \langle \varrho, \psi (\bfphi^{- 1}) \rangle = \langle \dif \varrho, \psi (\bfphi^{- 1})
%   \rangle + \langle \varrho, \dif \psi (\bfphi^{- 1}) \rangle \]
%\[ = \langle F (\varrho), \psi (X^{- 1}) \rangle - \left\langle {\sigma^k} 
%   \cdummy \nabla u, \varphi (X^{- 1}) \right\rangle \circ d W^k - \langle \varrho,
%   \sigma^k \cdummy \nabla (\psi (X^{- 1})) \rangle \circ d W^k \]
%\[ = \langle F (\varrho), \psi (X^{- 1}) \rangle, \]
%
%--

In view of the measure preserving property of the flow of the diffeomorphism $\bfphi$ and \eqref{eq:w12}, all the remaining claims are immediate.
\end{proof}

\subsection{Main result}

Our main goal of this paper is to show the following result.

\begin{Theorem}\label{thm:main}
Let $\bfQ_k \in C^\infty (\TN, R^N)$, ${\rm div}\bfQ_k = 0$,  $k=1,\dots,K$, and let $\gamma > \frac{N}{2}$, $N= 2,3$ be given.
%Suppose that $\varrho_0\in L^\gamma(\mt)$ with $\varrho_0\geq0$ a.a.  and $\varrho_0^{-1/2}\bfq_0\in L^2 (\mt, R^N)$ 
%are deterministic initial data.
Suppose that $(\varrho_0,\bfq_0)\in L^\gamma(\mt)\times L^{\frac{2\gamma}{\gamma+1}}(\mt)$ with $\varrho_0\geq0$ a.a.  and $\varrho_0^{-1/2}\bfq_0\in L^2(\mt)$ 
are given deterministic initial data. 
Then there exists a dissipative martingale solution to \eqref{i1B}--\eqref{i2B} with the initial condition $(\varrho_{0},\bfq_{0})$ in the sense of Definition \ref{MD1B}. In addition, the continuity equation holds in the renormalised sense, that is we have
\begin{align*}%\label{eq:condrenWB}
\begin{aligned}
\int_{\mt}\theta(\eta)\psi\dx\bigg|_{s=0}^{s=t}&-
\int_0^t\int_{\mt}\theta(\eta) \bfv\cdot\nabla^{\bfphi}\psi\dxs=\int_0^t\int_{\mt}\big(\theta(\eta)-\theta'(\eta)\eta\big)\div^{\bfphi}\bfv\,\psi\dxs
\end{aligned}
\end{align*}
%\begin{align}\label{eq:ren:delta}
%\partial_t\theta(\tilde\eta)+\Div^{\tilde\bfphi}(\theta(\tilde\eta)\tilde\bfv)&=\big(\theta(\tilde\eta)-\theta'(\tilde\eta)\tilde\eta\big)\div^{\tilde\bfphi}\tilde\bfv
%\end{align} 
$\mathbb P$-a.s. for any $\theta\in BC^2([0,\infty))$.
\end{Theorem}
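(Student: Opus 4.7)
The plan is to construct a dissipative martingale solution to the transformed system \eqref{i1B}--\eqref{i2B} via a multi-layer approximation scheme, in the spirit of the Feireisl--Lions construction but adapted to the random, flow-dependent operators $\Div^{\bfphi}$ and $\nabla^{\bfphi}$. On the innermost level I would regularise the continuity equation by an artificial viscosity $\varepsilon\Delta\eta$ built from the \emph{untransformed} Laplacian, so that this correction does not interact with $\bfphi$, augment the pressure by a high-power penalisation $\delta\eta^{\beta}$ with $\beta$ sufficiently large, and solve the coupled system by a Galerkin scheme in $\bfv$ together with the parabolic continuity equation for $\eta$. Since all coefficients are progressively measurable and spatially smooth, an approximate martingale solution is obtained through Jakubowski's version of Skorokhod's theorem; this is essential for preserving the $(\mathfrak{F}_t)$-adaptedness required in Definition~\ref{MD1B}.

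The first genuine limit is $\varepsilon\to 0$ (Section~\ref{s:4}). The divergence-free assumption on the $\bfQ_k$ makes the noise energy conservative, so the energy inequality \eqref{eq:enedWB} holds pathwise and delivers pathwise bounds $\eta\in L^\infty(I;L^\gamma)$, $\sqrt{\eta}\,\bfv\in L^\infty(I;L^2)$, and $\nabla^{\bfphi}\bfv\in L^2(I\times\mt)$, transferred to $\bfv\in L^2(I;W^{1,2})$ by \eqref{eq:w12} and the a.s. control of $\nabla\bfphi$ from \eqref{eq:3010a}. Combined with the additional integrability of $\eta$ inherited from the $\delta\eta^\beta$ penalisation, these estimates allow stochastic compactness and the extraction of a candidate limit almost surely on a new probability space. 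Identifying the limit pressure requires the effective viscous flux identity, which one would usually obtain by testing the momentum equation with a multiplier of the form $\nabla^{\bfphi}(-\Delta^{\bfphi})^{-1}T_k(\eta)$. \textbf{The main obstacle} is that this multiplier depends on $\bfphi$ and is therefore only H\"older-in-time, so $\partial_t\nabla^{\bfphi}(-\Delta^{\bfphi})^{-1}$ is not available. I would circumvent this pathwise by introducing a random partition $0=t_0(\omega)<\dots<t_{M(\omega)}(\omega)=T$ whose mesh is chosen small in terms of the H\"older seminorm of $\bfphi(\omega)$ from \eqref{eq:3010}, freezing $\bfphi$ on each subinterval, reducing the computation to a quasi-deterministic commutator argument, and summing the resulting estimates. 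Because a candidate limit has already been fixed almost surely by Jakubowski's theorem, no further subsequence extraction is needed when assembling the pieces across $\omega$.

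In Section~\ref{s:5} I would pass $\delta\to 0$ and remove the remaining regularisation, which is the hardest step. First, the pressure integrability must be improved pathwise to $L^{\gamma+\theta}(I\times\mt)$ by testing the momentum equation with a Bogovskii-type corrector adapted to the flow, namely a quantity behaving as $\mathcal{B}^{\bfphi}[\eta^\theta-\langle\eta^\theta\rangle]$, whose mapping properties follow from the spatial smoothness of $\bfphi$. With this extra integrability in hand, the pathwise random-partition effective viscous flux argument is repeated to produce
\[
\overline{p(\eta)\eta}-\overline{p(\eta)}\,\eta=(\lambda+2\mu)\bigl(\overline{\eta\,\Div^{\bfphi}\bfv}-\eta\,\Div^{\bfphi}\bfv\bigr).
\]
Combining this identity with the renormalised continuity equation derived via the DiPerna--Lions commutator lemma applied to $\Div^{\bfphi}$ (the commutator errors being controlled pathwise by the spatial smoothness of $\bfphi$), a standard convexity argument then forces $\overline{p(\eta)}=p(\eta)$ and hence strong convergence of the density. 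The renormalised identity stated in Theorem~\ref{thm:main} follows from the same DiPerna--Lions machinery applied to the limit, once strong convergence of $\eta$ is established. Throughout, the filtration is taken to be the one jointly generated by $W$ and by the current candidate solution, so that $(\mathfrak{F}_t)$-adaptedness is preserved in every limit passage, enabling Lemma~\ref{l:equiv} to be applied at the end.
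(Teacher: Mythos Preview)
Your outline is broadly correct and follows the paper's strategy: multi-layer approximation, pathwise energy bounds from the energy-conservative noise, Jakubowski--Skorokhod compactness, a pathwise random time partition with frozen coefficients to recover the effective viscous flux identity, and the DiPerna--Lions renormalisation to close the density argument. Two points deserve correction or elaboration.

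First, the paper's innermost regularisation is not a Galerkin projection but a spatial convolution $[\cdot]_l$ inserted into the momentum equation, so that the approximate system \eqref{sys-apL} is a genuinely parabolic system solved directly by maximal regularity (Theorem~\ref{thm:mainn}). This produces a \emph{three}-parameter scheme $(n,l,\delta)$, and the limit $l\to 0$ (Section~5.1) is a separate, nontrivial step: there the frozen-time argument is replaced by a time mollification $\underline{\tilde\bfphi}$ of the flow combined with Lemma~\ref{ChMZ-lem}, which controls $\|\eta(\bfv-\bfv_\sigma)\|_{L^1}$ uniformly and allows the commutator terms in the effective viscous flux computation to be made small. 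Your sketch collapses this layer into ``remaining regularisation'' without indicating the mechanism; a Galerkin route may well work, but the paper's $[\cdot]_l$ device is specifically designed so that the approximate energy identity \eqref{energy} closes exactly.

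Second, you have the role of the cutoff $T_k$ inverted. In the viscosity limit $n\to\infty$ the artificial pressure $\delta\eta^\Gamma$ is still present, so the pathwise estimate \eqref{47} gives $\eta\in L^{\Gamma+1}_{t,x}$ and the flux identity can be written directly with $\eta$ (see \eqref{eq:fluxpsi}); no truncation is needed there. It is precisely in the final step $\delta\to 0$ that the density integrability drops to $L^{\gamma+\Theta}$ with $\Theta\le\frac{2}{N}\gamma-1$, which for $\gamma$ near $N/2$ does \emph{not} suffice to give meaning to $\overline{p(\eta)\eta}$. The paper therefore works with $T_k(\eta)$ in \eqref{eq:flux'}, proves the oscillation defect bound \eqref{eq.amplosc''}, and only then passes $k\to\infty$. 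Writing the Lions identity with bare $\eta$ at that stage, as you do, would be a genuine gap.
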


\subsection{Transformed div-curl lemma}

We record the following version of the div-curl lemma in the transformed coordinates. It is a natural counterpart of \cite[Lemma 3.4]{feireisl1} and relies on the fact that for two operators $A,B$ it holds $(AB)^{\bfphi}=A^{\bfphi}B^{\bfphi}$ as well as $(A^{-1})^{\bfphi}=(A^{\bfphi})^{-1}$.

\begin{Lemma}\label{lem:divcurl}
Let $p,q,r\in(1,\infty)$ such that $\frac{1}{p}+\frac{1}{q}=\frac{1}{r}<1$. Suppose that
$(\bfu_\varepsilon)$ and $(\bfv_\varepsilon)$ are sequences which converge weakly in 
$L^p(\mt, R^N)$ and $L^q(\mt, R^N)$ to $\bfu$ and $\bfv$, respectively. We
 Suppose that $\bfphi,\bfphi_\varepsilon:\mt\rightarrow\mt$ are smooth and measure preserving and invertible functions with $\bfphi_\varepsilon\rightarrow\bfphi$ uniformly in $\mt$.
 Then we have
\begin{align*}
\bfu_\varepsilon\cdot\nabla^{\bfvarphi_\varepsilon}(-\Delta^{\bfvarphi_\varepsilon})^{-1}\Div^{\bfvarphi_\varepsilon}[\bfv_\varepsilon]&-\bfv_\varepsilon\cdot\nabla^{\bfvarphi_\varepsilon}(-\Delta^{\bfvarphi_\varepsilon})^{-1}\Div^{\bfvarphi_\varepsilon}[\bfu_\varepsilon]\\
&\rightarrow \bfu\cdot\nabla^{\bfvarphi}(-\Delta^{\bfvarphi})^{-1}\Div^{\bfvarphi}[\bfv]-\bfv\cdot\nabla^{\bfvarphi}(-\Delta^{\bfvarphi})^{-1}\Div^{\bfvarphi}[\bfu]
\end{align*}
weakly in $L^r(\mt)$, at least for a subsequence.
\end{Lemma}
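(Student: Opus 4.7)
The natural strategy is to reduce to the classical Euclidean div-curl lemma \cite[Lemma 3.4]{feireisl1} by pushing the sequences through the flow $\bfphi_\varepsilon^{-1}$, applying the known result, and then pulling back. Using the identity $(AB)^{\bfphi}=A^{\bfphi}B^{\bfphi}$ together with $(A^{-1})^{\bfphi}=(A^{\bfphi})^{-1}$ that is already recorded before the statement, one sees that
$$
\nabla^{\bfphi_\varepsilon}(-\Delta^{\bfphi_\varepsilon})^{-1}\Div^{\bfphi_\varepsilon}[\bfv_\varepsilon]
=\big[\nabla(-\Delta)^{-1}\Div\tilde{\bfv}_\varepsilon\big]\circ\bfphi_\varepsilon,
\qquad
\tilde{\bfv}_\varepsilon:=\bfv_\varepsilon\circ\bfphi_\varepsilon^{-1},
$$
and similarly for $\bfu_\varepsilon$. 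Writing also $\bfu_\varepsilon=\tilde{\bfu}_\varepsilon\circ\bfphi_\varepsilon$ with $\tilde{\bfu}_\varepsilon:=\bfu_\varepsilon\circ\bfphi_\varepsilon^{-1}$, the expression whose weak limit we want is
$w_\varepsilon\circ\bfphi_\varepsilon$, where
$$
w_\varepsilon:=\tilde{\bfu}_\varepsilon\cdot\nabla(-\Delta)^{-1}\Div\tilde{\bfv}_\varepsilon\;-\;\tilde{\bfv}_\varepsilon\cdot\nabla(-\Delta)^{-1}\Div\tilde{\bfu}_\varepsilon.
$$

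Next I would verify that the tilded sequences satisfy the hypotheses of the classical div-curl lemma. Since $\bfphi_\varepsilon$ is measure preserving, the change of variables $y=\bfphi_\varepsilon^{-1}(x)$ yields $\|\tilde{\bfu}_\varepsilon\|_{L^p}=\|\bfu_\varepsilon\|_{L^p}$, so the tilded sequences are bounded in their respective Lebesgue spaces. To identify weak limits, test against an arbitrary $\psi\in L^{p'}(\mt, R^N)$:
$$
\int_{\mt}\tilde{\bfu}_\varepsilon\cdot\psi\dx=\int_{\mt}\bfu_\varepsilon\cdot(\psi\circ\bfphi_\varepsilon)\dx.
$$
The key subordinate claim is $\psi\circ\bfphi_\varepsilon\to\psi\circ\bfphi$ strongly in $L^{p'}$. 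For $\psi\in C(\mt, R^N)$ this is immediate from the uniform convergence $\bfphi_\varepsilon\to\bfphi$; for general $\psi\in L^{p'}$ one approximates by continuous functions and uses the isometric identity $\|\chi\circ\bfphi_\varepsilon\|_{L^{p'}}=\|\chi\|_{L^{p'}}$ (again by measure preservation) to pass to the limit. Combined with $\bfu_\varepsilon\rightharpoonup\bfu$ in $L^p$, this gives $\tilde{\bfu}_\varepsilon\rightharpoonup\tilde{\bfu}:=\bfu\circ\bfphi^{-1}$ weakly in $L^p$, and analogously for $\tilde{\bfv}_\varepsilon$.

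Now the classical div-curl lemma \cite[Lemma 3.4]{feireisl1} applies to $(\tilde{\bfu}_\varepsilon,\tilde{\bfv}_\varepsilon)$ and yields, up to a subsequence,
$$
w_\varepsilon\rightharpoonup w:=\tilde{\bfu}\cdot\nabla(-\Delta)^{-1}\Div\tilde{\bfv}-\tilde{\bfv}\cdot\nabla(-\Delta)^{-1}\Div\tilde{\bfu}
\quad\text{weakly in }L^r(\mt).
$$
It remains to return to the original coordinates, i.e.\ to pass to the limit in $w_\varepsilon\circ\bfphi_\varepsilon$. Testing against $\psi\in L^{r'}$,
$$
\int_{\mt}(w_\varepsilon\circ\bfphi_\varepsilon)\psi\dx=\int_{\mt}w_\varepsilon\,(\psi\circ\bfphi_\varepsilon^{-1})\dx,
$$
and by the same density-plus-isometry argument as before one checks that $\psi\circ\bfphi_\varepsilon^{-1}\to\psi\circ\bfphi^{-1}$ strongly in $L^{r'}$. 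The weak convergence of $w_\varepsilon$ then delivers $w_\varepsilon\circ\bfphi_\varepsilon\rightharpoonup w\circ\bfphi$ in $L^r$, which after unwinding the identities for the transformed operators is precisely the claim.

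The main obstacle is the strong convergence $\psi\circ\bfphi_\varepsilon^{-1}\to\psi\circ\bfphi^{-1}$ in $L^{r'}$, since uniform convergence of $\bfphi_\varepsilon\to\bfphi$ does not a priori imply uniform convergence of the inverses. In the setting of the paper this is handled via the smoothness and equi-continuity of $\bfphi_\varepsilon$ (so that Arzel\`a--Ascoli provides $\bfphi_\varepsilon^{-1}\to\bfphi^{-1}$ uniformly along a subsequence), together with the fact that the composition operator is an isometry on $L^{r'}$ under measure preservation, permitting the reduction to continuous $\psi$ by density. Beyond this single technical point, the proof is a transparent book-keeping exercise built on the algebraic identities for operators with superscript $\bfphi$.
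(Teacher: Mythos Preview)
Your approach is essentially the paper's: push through $\bfphi_\varepsilon^{-1}$, apply the classical lemma \cite[Lemma 3.4]{feireisl1}, and pull back. The one difference worth noting is that the paper tests only against smooth $\bfpsi\in\mathcal D(\TN)$ (so $\bfpsi\circ\bfphi_\varepsilon\to\bfpsi\circ\bfphi$ is immediate and uniqueness of weak limits in $L^p$ versus $\mathcal D'$ finishes the identification) and phrases the reduction as an \emph{equivalence}: since the forward step ``$f_\varepsilon\rightharpoonup f\Rightarrow f_\varepsilon\circ\bfphi_\varepsilon^{-1}\rightharpoonup f\circ\bfphi^{-1}$'' uses only $\bfphi_\varepsilon\to\bfphi$, applying it once more to any weak subsequential limit of $w_\varepsilon\circ\bfphi_\varepsilon$ identifies that limit as $w\circ\bfphi$ without ever invoking $\bfphi_\varepsilon^{-1}\to\bfphi^{-1}$, thereby dissolving what you flagged as the main obstacle.
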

\begin{proof}
We introduce the sequences 
$\overline\bfu_\varepsilon:=\bfu_\varepsilon\circ\bfphi^{-1}_\varepsilon$ and $\overline\bfv_\varepsilon:=\bfv_\varepsilon
\circ\bfphi^{-1}_\varepsilon$. Clearly they converge weakly in $L^p(\mt)$ and $L^q(\mt)$ too, at least after taking a subsequence. This follows from the fact that $\bfphi_\varepsilon$ is measure-preserving.  
 We must show that the limits coincide with $\overline\bfu:=\bfu\circ\bfphi^{-1}$ and $\overline\bfv:=\bfv\circ\bfphi^{-1}$, respectively.
For $\bfpsi\in\mathcal D(\mt)$ we have
\begin{align*}
\int_{\mt}\overline\bfu_\varepsilon\cdot\bfpsi\dx&=\int_{\mt}\bfu_{\varepsilon}\cdot\bfpsi\circ\bfphi_{\varepsilon}\dx\longrightarrow \int_{\mt}\bfu\cdot\bfpsi\circ\bfphi\dx=\int_{\mt}\overline\bfu\cdot\bfpsi\dx
\end{align*}
and similarly for $\overline\bfv_\varepsilon$.
Since weak limits in $L^p(\mt)$ (respectively $L^q(\mt)$) and in $\mathcal D'(\mt)$ must coincide
the desired convergence follows.
Hence the claim is equivalent to
\begin{align*}
\overline\bfu_\varepsilon\cdot\nabla(-\Delta)^{-1}\Div[\overline\bfv_\varepsilon]&-\overline\bfv_\varepsilon\cdot\nabla(-\Delta)^{-1}\Div[\overline\bfu_\varepsilon]\\
&\rightarrow \overline\bfu\cdot\nabla(-\Delta)^{-1}\Div[\overline\bfv]-\overline\bfv\cdot\nabla(-\Delta)^{-1}\Div[\overline\bfu]
\end{align*}
weakly in $L^r(\mt)$.
This is exactly the original version of \cite[Lemma 3.4]{feireisl1}.
\end{proof}

%\textcolor{green}{EF: Should we move this to Appendix?}
The rest of the paper is devoted to the proof of Theorem \ref{thm:main}.

\section{The approximate system}\label{s:3}

To construct the solutions we are required to pass through an approximation scheme. Inspired by the theory of deterministic compressible Navier--Stokes equations, we propose the following approximate system.
 Given $l>0$ and $n\in\N$, we define
\begin{equation}\label{sys-apL}
    \begin{array}{l}
       \displaystyle    \partial_t \eta + \div(\bfA^\top_{\bfphi} \eta \bfv ) = 
       \frac{1}{n}\Delta \eta, \ \mbox{where} \ \bfA^\top_{\bfphi} = \Grad \bfphi^{-1}. \\
  \displaystyle  \partial_t \Big(\Big(\frac{1}{n}+[\eta]_l \Big) \bfv \Big) +
    \div\big([\bfA_{\bfphi}^\top\eta \bfv ]_l \otimes \bfv \big) 
    -\frac{1}{2n} [\nabla \eta]_l \nabla \bfv 
    = \div^{\bfphi} \mathbb{S}(\nabla^{\bfphi} \bfv) -\nabla^{\bfphi} p_\delta(\eta),   
    \end{array}
\end{equation}
where $[f]_l=\kappa_l \ast f$ denotes a spatial regularization by means of a convolution with a family of 
	regularizing kernels $\kappa_l$ for $l>0$, meaning
\begin{equation}
[\eta]_l (x) =   \int_{\TN} \kappa_l(y-x) \eta(y)\,\dd y.
\end{equation}
The symbol $p_\delta$ denotes a pressure regularization,
\[
{p_\delta (\eta) = a \eta^\gamma + \delta \eta^2 + \delta \eta^\Gamma, \ \delta > 0,\ 4 \leq \Gamma < 6.}
\]
As $\bfQ_k$ are solenoidal, the transformation $\bfphi$ is volume preserving, and we have
\begin{equation}\label{A-phi}
{\div^{\bfphi} \cdot = \div (\bfA^\top_{\bfphi} \cdot).}  
\end{equation}
The way the regularisation through $[\cdot]_l$ enters equations \eqref{sys-apL} is carefully chosen to ensure energy estimates as we shall see below in \eqref{energy}.

The system (\ref{sys-apL}) is supplemented with approximate initial data 
\begin{equation}\label{in-da-n}
 \eta^{(l)}_0,\bfv^{(l)}_0 \in W^{2-2/p,p}(\mt) \mbox{ \ with \ }  \eta^{(l)}_0 >0,
\end{equation}
where $p> \frac{N+2}{2}$. The initial data are chosen so that $\eta^{(l)}_0 \to \eta_0$ in $L^\Gamma(\mt)$
and $\sqrt{\eta^{(l)}_0} \bfv^{(l)}_0 \to \sqrt{\eta_0} \bfv_0$ in $L^2(\mt, R^N)$.

\medskip 

The idea is to use smoothness of $\bfphi$ in the spatial variable and solve the problem \eqref{sys-apL} pathwise in a semi--deterministic way.

Given a stochastic basis $(\Omega,\mf,(\mf_t)_{t\geq 0},\prst)$ with a complete right-continuous filtration, and a $K$-dimensional $(\mf_t)$-Wiener process $W$, we fix the transformation $\bfphi$ via \eqref{eq:flowsde}. 
Following \cite[Lemma 16]{ChMZ}, we observe that \eqref{sys-apL}, \eqref{in-da-n} 
is a standard parabolic system with coefficients smooth in space and H\" older continuous in time $\prst$-a.s. Accordingly, the problem
admits a local (strong) solution $\eta, \bfv$ unique in the class 
\begin{equation} \label{class}
	\eta, \bfv \in L^p(0,T_{\rm max}; W^{2,p}(\mt) ), \ \partial_t \eta, \partial_t \bfv \in L^p(0,T_{\rm max}; L^{p}(\mt) )  \mbox{ \ as long as \ } p > \frac{N+2}{2}
\end{equation}	 
$\prst$-a.s. Note that the lower bound imposed on the exponent $p$ guarantees boundedness of the approximate density $\eta$ in $L^\infty (\TN \times (0,T_{\rm max}))$. To show $T_{\rm max} = T$, it is enough to establish suitable {\it a priori} bounds independent of $T_{\rm max}$. These follow from the standard 
energy balance obtained via multiplying the second equation in \eqref{sys-apL} by $\bfv$:
\begin{align}
	\frac{1}{2} &\frac{\dd}{\dt} \int_{\mt} \left( \Big(\frac{1}{n} +[\eta]_l \Big) |\bfv|^2 + \frac{\delta}{\Gamma-1}\eta^\Gamma+\delta\eta^2+\frac{a}{\gamma-1}\eta^\gamma  \right) \dx + \int_{\mt} \mathbb{S}(\nabla^{\bfphi} \bfv):\nabla^{\bfphi} \bfv \dx \nonumber \\ &+ \frac{1}{n} \int_{\mt} 
\left( a\gamma \eta^{\gamma - 2} + \delta \Gamma \eta^{\Gamma - 2} + \delta \right)	|\nabla \eta|^2 \dx=0.
\nonumber	
\end{align}

Moreover, we may test $(\ref{sys-apL})_1$ by $\eta$ obtaining
\begin{align*}
	\frac{1}{2}\frac{\dd}{\dt} \int_{\mt} \eta^2 \dx +
	\frac{1}{n}\int_{\mt} |\nabla \eta|^2 \dx&\leq 
	\bigg|\int_{\mt} \eta^2 \div^{\bfphi} \bfv \dx\bigg| \\&\leq \|\eta\|^2_{L^{4}(\mt)}
	\|\nabla^{\bfphi}\bfv\|_{L^{2}(\mt)} \leq C(\delta, \varrho_0,\bfv_0).
\end{align*}

The above estimates are strong enough to guarantee global existence and uniqueness for the system \eqref{sys-apL}, \eqref{in-da-n}, see \cite[Section 2]{ChMZ} for details.
Summarizing, we record the following result.

\begin{Theorem}\label{thm:mainn}
Let $n > 0$, $l > 0$, $\delta > 0$ be fixed. Let the transformation $\bfphi$ be determined by \eqref{eq:flowsde}, where 
$W$ is a K-dimensional $(\mf_t)$-Wiener process defined on a stochastic basis  $(\Omega,\mf,(\mf_t)_{t\geq 0},\prst)$. 
Suppose the initial data $\eta^l_0$, $\bfv^l_0$ belong to the regularity class \eqref{in-da-n}, with $p > \frac{N+2}{2}$.
Then problem \eqref{sys-apL}, \eqref{in-da-n} admits a strong solution $(\eta, \bfv)$ unique in the class \eqref{class} $\prst$-a.s.

In addition the following holds true:
\begin{enumerate}[{\rm (a)}]
\item the density $\eta$ is strictly positive and both $\eta$ and $\bfv$ are $(\mf_t)$-adapted;
\item the energy balance
\begin{align}
	\frac{1}{2} &\frac{\dd}{\dt} \int_{\mt} \left( \Big(\frac{1}{n} +[\eta]_l\Big) |\bfv|^2 + \frac{\delta}{\Gamma-1}\eta^\Gamma+\delta\eta^2+\frac{a}{\gamma-1}\eta^\gamma  \right) \dx + \int_{\mt} \mathbb{S}(\nabla^{\bfphi} \bfv):\nabla^{\bfphi} \bfv \dx \nonumber \\ &+ \frac{1}{n} \int_{\mt} 
	\left( a\gamma \eta^{\gamma - 2} + \delta \Gamma \eta^{\Gamma - 2} + \delta \right)	|\nabla \eta|^2 \dx=0
	\label{energy}
\end{align}
holds $\prst$-a.s.;
\item in particular, we have uniform bounds independent of the approximation parameters:
\begin{align}
\sup_{t \in (0,T)} \| \eta (t, \cdot) \|_{L^\gamma (\TN)} &\leq c(\vr_0, \bfv_0) \\ 
\sup_{t \in (0,T)} \|  [\eta]_l  \bfv (t, \cdot) \|_{L^{\frac{2 \gamma}{\gamma + 1}}(\TN, R^N)} &\leq c(\vr_0, \bfv_0) 	
\end{align}	
$\prst$-a.s.
\end{enumerate}

\end{Theorem}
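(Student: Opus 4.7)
The plan is to solve \eqref{sys-apL}, \eqref{in-da-n} pathwise for a.e.\ $\omega\in\Omega$. By \eqref{eq:3010} the flow $\bfphi$ and its inverse belong to $C^\alpha_tC^2_x$ for some $\alpha\in(0,1/2)$, so the matrix $\bfA_\bfphi$ inherits the same regularity and is, in particular, smooth and invertible in $x$. System \eqref{sys-apL} is therefore a coupled parabolic system whose principal parts are $-\tfrac{1}{n}\Delta\eta$ in the continuity equation and the uniformly elliptic $-\Div^\bfphi\mathbb S(\nabla^\bfphi\bfv)$ in the momentum equation, the remaining couplings being smoothed in $x$ by the mollification $[\cdot]_l$. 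For initial data in \eqref{in-da-n} with $p>(N+2)/2$, I would invoke the local existence and uniqueness result \cite[Lemma~16]{ChMZ} (or run a Banach fixed-point argument in maximal $L^p$-regularity spaces obtained by linearizing around $\eta_0^{(l)}$) to produce a unique local strong solution $(\eta,\bfv)$ on a maximal interval $[0,T_{\max}(\omega))$ in the class \eqref{class}; the embedding $W^{2,p}\hookrightarrow L^\infty$ keeps $\eta$ bounded along its existence interval. Strict positivity $\eta>0$ follows from the parabolic maximum principle applied to the continuity equation, viewed as a linear equation for $\eta$ with smooth bounded coefficients once $\bfv$ is known.

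I would then derive the pathwise energy balance \eqref{energy} by testing the momentum equation by $\bfv$ and the continuity equation by $P_\delta'(\eta)$. The viscous and pressure terms are handled by the duality $(\Div^\bfphi)^{\ast}=-\nabla^\bfphi$ and by
\begin{align*}
\int_\mt P_\delta'(\eta)\,\Div^\bfphi(\eta\bfv)\dx=\int_\mt p_\delta(\eta)\,\Div^\bfphi\bfv\dx,
\end{align*}
both of which follow from $\bfphi$ being measure preserving; these ensure cancellation of the pressure contributions between the two equations and the appearance of the potential $P_\delta$. The delicate point is the convective term: testing $\div([\bfA_\bfphi^\top\eta\bfv]_l\otimes\bfv)$ by $\bfv$ produces $\tfrac12\int_\mt[\div(\bfA_\bfphi^\top\eta\bfv)]_l|\bfv|^2\dx$, and substituting $\div(\bfA_\bfphi^\top\eta\bfv)=-\partial_t\eta+\tfrac{1}{n}\Delta\eta$ from the continuity equation leaves a residual proportional to $\int_\mt\Delta[\eta]_l|\bfv|^2\dx$ after the $\partial_t[\eta]_l$-contribution has been absorbed into $\tfrac12\tfrac{\dd}{\dd t}\int_\mt(\tfrac{1}{n}+[\eta]_l)|\bfv|^2\dx$. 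The regularization $-\tfrac{1}{2n}[\nabla\eta]_l\nabla\bfv$ in \eqref{sys-apL} is inserted precisely so that, after integration by parts, its contribution against $\bfv$ cancels this residual, producing the dissipation $\int_\mt\mathbb S(\nabla^\bfphi\bfv):\nabla^\bfphi\bfv\dx$ and the $\tfrac{1}{n}$-weighted density-gradient dissipation displayed in \eqref{energy}.

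Global existence $T_{\max}=T$ and the uniform bounds in (c) then follow from \eqref{energy} combined with the auxiliary $L^2_tH^1_x$-bound on $\eta$ obtained by testing the continuity equation by $\eta$ (as displayed just before the statement): these give pathwise bounds on $\eta$ in $L^\infty_tL^\Gamma_x$, on $\bfv$ in $L^2_tW^{1,2}_x$, and on $\sqrt{\tfrac{1}{n}+[\eta]_l}\bfv$ in $L^\infty_tL^2_x$, all uniform in $T_{\max}$; combined with the parabolic framework of \cite{ChMZ}, they preclude any finite-time blow-up in the class \eqref{class}. Finally, $(\mf_t)$-adaptedness follows from the fact that $\bfphi$ is $(\mf_t)$-adapted as the probabilistically strong solution of \eqref{eq:flowsde}, and by pathwise uniqueness the solution $(\eta,\bfv)$ restricted to $[0,t]$ is a measurable function of $\bfphi|_{[0,t]}$ and of the deterministic initial data, hence $(\mf_t)$-adapted. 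I expect the main technical difficulty to be the careful bookkeeping in the closure of the energy identity: it is this identity that dictates the precise form of each of the three regularizations ($\tfrac{1}{n}\Delta\eta$, the added mass $\tfrac{1}{n}$ in the momentum equation, and $-\tfrac{1}{2n}[\nabla\eta]_l\nabla\bfv$) in \eqref{sys-apL}, and it must be carried out pathwise with coefficients $\bfA_\bfphi$ that are only H\"older continuous in time.
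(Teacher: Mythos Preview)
Your proposal is correct and follows essentially the same approach as the paper: pathwise local existence via \cite[Lemma~16]{ChMZ}, the energy balance \eqref{energy} obtained by testing the momentum equation with $\bfv$ (together with the continuity equation to produce the pressure potential and the gradient-dissipation terms), the auxiliary $L^2_tH^1_x$-bound on $\eta$ from testing the continuity equation by $\eta$, and finally the appeal to \cite[Section~2]{ChMZ} to rule out finite-time blow-up. Your treatment of the cancellation in the convective term and of $(\mf_t)$-adaptedness is in fact more explicit than what the paper records.
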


\section{Vanishing viscosity limit}
\label{s:4}

In this section we let $n\rightarrow\infty$ in system \eqref{sys-apL} to obtain
\begin{align} \label{i1Bl}
\partial_t \eta + \Div^{\bfphi} (\eta\bfv)  &=0,\\
\label{i2Bl}
\partial_t ([\eta]_l\bfv) + \Div \big([A^\top_{\bfphi} \eta\bfv]_l\otimes\bfv\big) -\Div^{\bfphi}\mathbb S(\nabla^{\bfphi} \bfv) + \nabla^{\bfphi} p_\delta(\eta)  &=0.
\end{align}
The solution to \eqref{i1Bl}--\eqref{i2Bl}, supplemented with the initial data
\begin{equation*} %\label{ID}
(\eta (0), ([\eta]_l \bfv) (0)) = (\vr_0, \bfq_0)
\end{equation*}
is understood in the following sense:

\begin{Definition}\label{MD1Bl}
Let $(\varrho_0,\bfq_0)\in L^\Gamma(\mt)\times L^{\frac{2\Gamma}{\Gamma+1}}(\mt)$ with $\varrho_0\geq 0$ a.a.,
$\frac{|\bfq_0|^2}{\vr_0} \in L^1(\TN)$.
The quantity  $$\big((\Omega,\mf,(\mf_t)_{t\geq0},\prst),\eta,\bfv,W)$$
is called a {\em dissipative martingale solution} to \eqref{i1Bl}--\eqref{i2Bl} with initial data $(\varrho_0,\bfq_0)$, provided the following holds:
\begin{enumerate}[(a)]
\item $(\Omega,\mf,(\mf_t)_{t\geq 0},\prst)$ is a stochastic basis with a complete right-continuous filtration;
\item $W$ is a $K$-dimensional $(\mf_t)$-Wiener process;
\item the density $\eta$ belongs to the space
$
C_{\rm{weak}}( I; L^{\Gamma}(\mt))$
$ \mathbb{P}\mbox{-a.s.}$, is non-negative and $(\mf_t)$-adapted;

\item the approximate momentum $[\eta]_l \bfv$
belongs to the space
$
 C_{\rm{weak}}( I; L^{\frac{2\Gamma}{\Gamma+1}}(\mt,R^{N}))
$
\pas\ and is $(\mf_t)$-adapted;
\item the velocity $\bfv$ belongs to $
L^2(I; W^{1,2} (\mt,R^N))
$
\pas\ and is $(\mathfrak{F}_{t})$-adapted;\footnote{Adaptedness of the velocity  is understood in the sense of random distributions, cf. \cite[Chapter~2.8]{BrFeHobook}.}
%\item $\mathfrak t \geq 0$ is an $(\mathfrak{F}_t)$-stopping time;
\item we have $(\eta(0),([\eta]_l \bfv)(0))=(\varrho_0,\bfq_0)$ $\mathbb P$-a.s.;
\item the continuity equation holds in the sense that
\begin{align*}%\label{eq:condWB}
\begin{aligned}
\int_{\mt}\eta\psi\dx\bigg|_{s=0}^{s=t}&-
\int_0^t\int_{\mt}\eta \bfv\cdot\nabla^{\bfphi}\psi\dxs=0
\end{aligned}
\end{align*}
$\mathbb P$-a.s.  for all $\psi\in C^\infty(\TN)$, where the flow map $\bfphi$ is determined by \eqref{eq:flowsde};
\item the momentum equation holds in the sense that
\begin{align*}%\label{eq:momdWB}
\begin{aligned}
\int_{\mt} [\eta]_l \bfv&\cdot \bfpsi\dx\bigg|_{s=0}^{s=t} -\int_0^t\int_{\mt}[\bfA_{\bfphi}^\top\eta \bfv]_l\otimes \bfv:\nabla \bfpsi\dxs
\\
&+\int_0^t\int_{\mt}\mathbb S(\nabla^{\bfphi} \bfv):\nabla^{\bfphi} \bfpsi \dxs-\int_0^t\int_{\mt}
p_\delta(\eta)\,\Div^{\bfphi} \bfpsi \dxs
=0
\end{aligned}
\end{align*} 
$\mathbb P$-a.s. for all $\bfpsi\in C^\infty(\mt,R^{N})$, where the flow map $\bfphi$ is determined by \eqref{eq:flowsde};
\item the energy inequality is satisfied in the sense that
\begin{align} \label{eq:enedW}
\begin{aligned}
- \int_I &\partial_t \psi \,
\mathscr E^{\bfphi}_{l,\delta} \dt+\int_I\psi\int_{\mt}\mathbb S(\nabla^{\bfphi} \bfv):\nabla^{\bfphi} \bfv\dxt\leq
\psi(0) \mathscr E_{l,\delta}^{\bfphi}(0)
\end{aligned}
\end{align}
holds $\mathbb P$-a.s. for any $\psi \in C^\infty_c([0, T))$, where the flow map $\bfphi$ is determined by \eqref{eq:flowsde}.
Here, we abbreviated
$$\mathscr E^{\bfphi}_{l,\delta}(t)= \int_{\mt}\Big(\frac{1}{2} [\eta(t)]_l | {\bfv}(t) |^2 + P_\delta(\eta(t))\Big)\dx,$$
where the pressure potential is given by $P_\delta(\eta)=\frac{\delta}{\Gamma-1}\eta^\Gamma+\delta\eta^2+\frac{a}{\gamma-1}\eta^\gamma$.
\end{enumerate}

\end{Definition}
We are going to prove the following result.

\begin{Theorem}\label{thm:mainl}
Suppose that $(\varrho_0,\bfq_0)\in L^\Gamma(\mt)\times L^{\frac{2\Gamma}{\Gamma+1}}(\mt)$ with $\varrho_0\geq0$ a.a.  and $\varrho_0^{-1/2}\bfq_0\in L^2(\mt)$ 
are given deterministic initial data. Then there exists a dissipative martingale solution to \eqref{i1Bl}--\eqref{i2Bl} with the initial condition $(\varrho_{0},\bfq_{0})$ in the sense of Definition \ref{MD1Bl}. In addition, the continuity equation holds in the renormalised sense, that is we have
\begin{align}\label{eq:condrenlWB}
\begin{aligned}
\int_{\mt}\theta(\eta)\psi\dx\bigg|_{s=0}^{s=t}&-
\int_0^t\int_{\mt}\theta(\eta) \bfv\cdot\nabla^{\bfphi}\psi\dxs=\int_0^t\int_{\mt}\big(\theta(\eta)-\theta'(\eta)\eta\big)\div^{\bfphi}\bfv\,\psi\dxs
\end{aligned}
\end{align}
%\begin{align}\label{eq:ren:delta}
%\partial_t\theta(\tilde\eta)+\Div^{\tilde\bfphi}(\theta(\tilde\eta)\tilde\bfv)&=\big(\theta(\tilde\eta)-\theta'(\tilde\eta)\tilde\eta\big)\div^{\tilde\bfphi}\tilde\bfv
%\end{align} 
$\mathbb P$-a.s. for any $\theta\in BC^2([0,\infty))$.
\end{Theorem}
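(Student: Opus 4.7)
The plan is to pass to the limit $n\to\infty$ in the approximate system \eqref{sys-apL}, starting from the pathwise strong solutions $(\eta_n,\bfv_n)$ furnished by Theorem \ref{thm:mainn}. First, I would collect uniform $\prst$-a.s.\ and expected estimates from the energy balance \eqref{energy} combined with the flow regularity \eqref{eq:3010a}--\eqref{eq:w12}: $\eta_n$ is bounded in $L^\infty(I;L^\Gamma(\mt))$, $\sqrt{[\eta_n]_l}\,\bfv_n$ in $L^\infty(I;L^2)$, $\bfv_n$ in $L^2(I;W^{1,2}(\mt,R^N))$ via \eqref{eq:w12}, and $\tfrac{1}{\sqrt{n}}\nabla\eta_n$ in $L^2(I\times\mt)$. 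A Bogovskii-type corrector built on $\eta_n^{\theta}-\langle\eta_n^{\theta}\rangle$, tested in the momentum equation of \eqref{sys-apL}, upgrades the pressure integrability to $p_\delta(\eta_n)\in L^{1+\theta}(I\times\mt)$ for some small $\theta>0$. Time-compactness for $[\eta_n]_l\bfv_n$ is obtained from its own evolution and the spatial smoothing $[\cdot]_l$, yielding a uniform bound on $\partial_t([\eta_n]_l\bfv_n)$ in a negative-order Sobolev space.

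Next, stochastic compactness. The joint laws of $(\eta_n,\,[\eta_n]_l\bfv_n,\,\bfv_n,\,\nabla^{\bfphi}\bfv_n,\,p_\delta(\eta_n),\,\bfphi,\,W)$ on appropriate path spaces ($C_{\mathrm{weak}}$ for the density and momentum, weak $L^2$ for the gradient, weak $L^{1+\theta}$ for the pressure, and $C$ for the flow and the Brownian motion) are tight. Jakubowski's generalization of Skorokhod's representation theorem (\cite[Chapter 2.8]{BrFeHobook}) delivers a new probability space carrying $(\tilde\eta_n,\tilde\bfv_n,\tilde\bfphi_n,\tilde W_n)$ with the original joint laws and converging $\tilde\prst$-a.s.\ to $(\tilde\eta,\tilde\bfv,\tilde\bfphi,\tilde W)$ in the respective topologies. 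Equipping the new space with the joint filtration generated by these limits and $\tilde W$, one checks that $\tilde W$ is an $(\tilde\mf_t)$-Brownian motion, that $\tilde\bfphi$ solves \eqref{eq:flowsde} driven by $\tilde W$, and that \eqref{i1Bl}--\eqref{i2Bl} together with \eqref{eq:enedW} transfer to the limit, the latter by lower semicontinuity.

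The main obstacle is identifying the nonlinear pressure limit, i.e.\ showing $\overline{p_\delta(\tilde\eta)}=p_\delta(\tilde\eta)$. The strategy is the transformed effective viscous flux identity
\[
\overline{p_\delta(\tilde\eta)\,\tilde\eta}-\overline{p_\delta(\tilde\eta)}\,\tilde\eta = (\lambda+2\mu)\bigl(\overline{\tilde\eta\,\Div^{\tilde\bfphi}\tilde\bfv}-\tilde\eta\,\Div^{\tilde\bfphi}\tilde\bfv\bigr),
\]
which, via the monotonicity of $p_\delta$, forces strong convergence $\tilde\eta_n\to\tilde\eta$ in $L^1(I\times\mt)$ and hence in $L^p$ for any $p<\Gamma$. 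To derive the identity one tests the momentum equations for $(\tilde\eta_n,\tilde\bfv_n)$ and for the limit pair against $\psi\,\nabla^{\tilde\bfphi_n}(-\Delta^{\tilde\bfphi_n})^{-1}(\tilde\eta_n-\langle\tilde\eta_n\rangle)$ with $\psi\in C^\infty_c([0,T))$. The principal difficulty, flagged in the introduction, is that $\nabla^{\tilde\bfphi_n}(-\Delta^{\tilde\bfphi_n})^{-1}$ is only $\alpha$-H\"older in time for $\alpha<1/2$ with non-uniform modulus in $\omega$, so the usual computation of its time derivative is unavailable. I would handle this pathwise: for each $\omega$, introduce a random partition $0=t_0<t_1<\dots<t_M=T$ of mesh $\delta_\omega$ small enough, in view of \eqref{eq:3010}, to control the flow oscillation on every $[t_j,t_{j+1}]$; freeze the operator at $t_j$ inside the subinterval and absorb the error in a remainder of order $\delta_\omega^\alpha$ that vanishes upon refinement. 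The transformed div-curl Lemma \ref{lem:divcurl} then governs the genuinely nonlinear passage in $n\to\infty$. Because $(\tilde\eta,\tilde\bfv,\tilde\bfphi)$ is already fixed $\tilde\prst$-a.s.\ from the Skorokhod step, no further subsequence extraction is needed and the pathwise, $\omega$-dependent estimates suffice.

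Once strong convergence of the density is available, the nonlinear terms in \eqref{i2Bl} — both $p_\delta(\tilde\eta_n)$ and $[\tilde\bfA^\top_{\tilde\bfphi_n}\tilde\eta_n\tilde\bfv_n]_l\otimes\tilde\bfv_n$ — pass to the limit, and the resulting object satisfies Definition \ref{MD1Bl}. For the renormalized continuity equation \eqref{eq:condrenlWB} I would argue directly at the limit level: $\tilde\eta\in L^\infty(I;L^\Gamma(\mt))\subset L^\infty(I;L^2(\mt))$ (since $\Gamma\geq 4$) and $\Div^{\tilde\bfphi}\tilde\bfv\in L^2(I\times\mt)$, while the smoothness of $\tilde\bfphi$ ensures that the standard mollification commutator with $\Div^{\tilde\bfphi}$ behaves as in the Euclidean case. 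The DiPerna--Lions commutator lemma applied to $\partial_t\tilde\eta+\Div^{\tilde\bfphi}(\tilde\eta\tilde\bfv)=0$ then yields \eqref{eq:condrenlWB} as an equality for every $\theta\in BC^2([0,\infty))$, completing the verification of all items in Definition \ref{MD1Bl}.
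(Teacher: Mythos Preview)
Your proposal is correct and follows essentially the same strategy as the paper: uniform energy bounds, Jakubowski--Skorokhod compactness, pathwise pressure estimates via a random time-partition with frozen-in-time operators, the effective viscous flux identity, and monotonicity of $p_\delta$ to force strong convergence of the density.

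Two minor deviations are worth noting. First, at this particular limit ($n\to\infty$ with $l>0$ fixed) the paper observes that the spatial mollification $[\,\cdot\,]_l$ already provides enough compactness in the convective terms that the transformed div--curl Lemma~\ref{lem:divcurl} is \emph{not} needed; it is only invoked later at the $\delta\to0$ step, where no smoothing is available. Your plan to use it here is not wrong, just heavier than necessary. Second, for the renormalised continuity equation the paper returns to the original variables $(\tilde\varrho,\tilde\bfu)$, applies the DiPerna--Lions procedure in the stochastic setting of \cite{BFHZ}, and transforms back, whereas you propose to run the commutator argument directly with $\Div^{\tilde\bfphi}$; both work thanks to the spatial smoothness of $\tilde\bfphi$ and $\Gamma\geq 4$. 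Finally, the paper's freezing argument is not a refinement-to-zero procedure: one fixes $\varkappa=\varkappa(L(\tilde\omega))$ small enough so that the frozen-vs-unfrozen discrepancy can be \emph{absorbed} into the left-hand side, and the partition is chosen once accordingly---no further limit in the mesh is taken.
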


Given initial data $(\varrho_0,\bfq_0)\in L^\Gamma(\mt)\times L^{\frac{2\Gamma}{\Gamma+1}}(\mt)$ with $\varrho_0\geq0$ a.a. and $\bfq_0$ such that $\varrho_0^{-1/2}\bfq_0\in L^2(\mt)$ we can regularize them to obtain a sequence $(\varrho_0^n,\bfu_0^n)$ of class $C^2(\mt)$ such that $\varrho_0^n >0$ and
\begin{align*}
\int_{\mt}\bigg(P_\delta(\varrho_0^n)+\frac{1}{2}\frac{|\bfq_0^n|}{\varrho_0^n}\bigg)\dx\rightarrow \int_{\mt}\bigg(P_\delta(\varrho_0)+\frac{1}{2}\frac{|\bfq_0|}{\varrho_0}\bigg)\dx
\end{align*}
as $n\rightarrow \infty$.
By Theorem \ref{thm:mainn}, for any fixed $n\in \N$, there is a strong solution $(\eta_n,\bfv_n)$ to system \eqref{sys-apL}. 
They are defined on a stochastic basis $(\Omega,\FF,(\FF_t)_{t\geq0},\p)$ with associated Wiener process $W$. 	
On account of the energy balance \eqref{energy} we obtain immediately the deterministic bounds 
\begin{align}\label{eq:enn}
\begin{aligned}
\sup_{I}\|\eta_n\|_{L^\Gamma(\mt)}^\Gamma
%+\sup_{I}\|\eta_n\|_{L^\gamma(\mt)}^\gamma
+\sup_{I}\left\| \left( \frac{1}{\sqrt{n}} + \sqrt{[\eta_n]_l} \right)  \bfv_n\right\|^2_{L^2(\mt; R^N)}&\leq\,C(\varrho_0,\bfq_0)\\
\frac{1}{n}\int_I\Big(\big\|\nabla|\eta_n|^{\frac{\Gamma}{2}}\big\|^2_{L^2(\mt)}\dt+\big\|\nabla\eta_n\big\|^2_{L^2(\mt)}\Big)\dt
%+\frac{1}{n}\int_I\|\nabla_x^{\bfphi}|\eta_n|^{\frac{\gamma}{2}}\|^2_{L^2(\mt)}\dt
+\int_I\|\nabla^{\bfphi}\bfv_n\|^2_{L^2(\mt)}\dt&\leq\,C(\varrho_0,\bfq_0),
\end{aligned}
\end{align}
which are uniform with respect to $n$.

\subsection{Stochastic compactness}

We introduce the pathspace
\[
\mathfrak{X} = \mathfrak{X}_{\eta}\times \mathfrak{X}_{\eta \bfv} \times \mathfrak{X}_{\bfv} \times \mathfrak{X}_p 
\times \mathfrak{X}_{\bfphi}\times\mathfrak{X}_{W} \times \mathfrak{X}_{\nu},
\]
where 
\begin{align*}
\mathfrak{X}_{\eta} = C_{\rm{weak}}\big([0,T]; L^\Gamma(\mt)\big)&,\quad 
%\mathcal X_{\eta(\cdot\wedge \mathfrak t_L)}&= \big(L^{\Gamma + 1}(0,T;L^{\Gamma + 1}(\mt)),w\big)^{\N},\\
\mathfrak{X}_{\eta \bfv} = C_{\rm{weak}}([0,T]; L^{\frac{2\Gamma}{\Gamma + 1}}(\mt,R^N)),\\
\mathfrak{X}_{\bfv} = \left(L^{2}(0,T; W^{1,2}(\mt,R^N)),w\right)&,\quad 
\mathfrak{X}_p = (L^\infty(0,T;  \mathcal{M}^+ (\mt)),w^*), \\ 
\mathfrak{X}_{\bfphi}= C^{\alpha}([0,T];C^{2}(\mt,\mt))&,\quad 
\mathfrak{X}_{W} = C([0,T];R^{K}),\\
\mathfrak{X}_{\nu} = (L^\infty((0,T) \times \mt&; {\rm Prob}( R^{1+N+N^2})),w^*),
\end{align*}
for some $\alpha\in (0,1/2)$.
%
%with the following laws:
%\begin{equation*}
%\begin{cases}
%\mu_{{\vec{m}^h}} \ \text{ is the law of }\ \vec{m}^h(\cdot\wedge\mathfrak t) \ \text{on} \   C([0,T];W^{-p-4,2}(\T))\cap C_w ([0,T];L^{\frac{2\gamma}{\gamma +1}}(\T)),\\
%\mu_{\varrho^h} \ \text{ is the law of }\ \varrho^h(\cdot,\wedge\mathfrak t) \ \text{on} \  C([0,T];W^{-p-4,2}(\mathcal O))\cap C_w([0,T]; L^{\gamma}(\T)),\\
%\mu_{ S^h} \ \text{ is the law of }\ S^h(\cdot\wedge\mathfrak t) \ \text{on} \  C([0,T];W^{-p-4,2})\cap C_w([0,T]; L^{\gamma}(\T)),\\
%\mu_{ \mathbf{e}^h} \ \text{ is the law of }\ \mathbf{e}^h(\cdot\wedge\mathfrak t) \ \text{on} \  L^2([0,T];W^{-p-4,2}),\\
%\mu_W \ \text{ is the law of } \ W \ \text{on} \ C([0,T],\UU_0), \\ 
%\mu_{\mathfrak t} \ \text{ is the law of } \ \mathfrak t \ \text{on} \ \R.  
%\end{cases}
%\end{equation*}
%In addition, let $\mu_{\mathrm{C}^h}, \mu_{\mathrm{P}^h} $ and $\mu_{\mathrm{Q}^h}$ denote the laws of
%Let $\bfr_T$ be the restriction operator which restricts measurable functions (or space-time distributions) defined on $(0,\infty)$ to $(0,T)$. 
 Similarly to \cite[Section~4.5]{BrFeHobook}
it follows that the
family of joint laws
\[
\left\{\mathcal{L} \left[ \eta_n,
[\eta_n]_l \bfv_n,\bfv_n, p_\delta (\eta_n) , \bfphi, {W}, \delta_{[\eta_n, \bfv_n,\nabla\bfv_n]}\right] ;\,n\in\mathbb N\right\}
\]
is tight on $\mathfrak X$. More precisely, using \eqref{eq:w12}, \eqref{eq:3010a} and \eqref{eq:enn} we deduce
\begin{equation} \label{new}
\mathbb{E}\left[\int_{I}\|\nabla \bfv_{n}\|_{L^{2}}^{2}\dt\right]\leq\mathbb{E}\left[\|\bfphi\|_{C([0,T];C^{1}(\TN,\TN))}^{2}\int_{I}\|\nabla^{\bfphi} \bfv_{n}\|_{L^{2}}^{2}\dt\right] \leq C
\end{equation}
uniformly in $n$, which implies the desired tightness of $\nabla \bfv_{n}  
\ \mbox{in}\ (L^2((0,T) \times \TN, R^{N \times N}$ ),w) , $n\in\N$. 
This bound, together with the first estimate in \eqref{eq:enn}, and 
a generalized version of Korn's inequality (see e.g.    \cite[Chapter 11, Theorem 11.23]{FeNo6A}), yields tightness of 
$\bfv_n$ in $\mathfrak{X}_v$.
Tightness of the other quantities follows the arguments from \cite[Section~4.5]{BrFeHobook}.

In view of Jakubowski's version of Skorokhod's representation theorem \cite{jakubow} (see also  \cite[Section 2.8]{BrFeHobook} for property c), we have the following result.

\begin{Proposition} \label{skorokhod}
There exists a complete probability space  $(\Tilde{\Omega}, \Tilde{\mathfrak F},\Tilde{\p})$ with $\mathfrak{X}$-valued random variables  $[\tilde\eta_n,
[\tilde\eta_n]_l \tilde\bfv_n,\tilde\bfv_n,  p_\delta( \tilde \eta_n ) , \tilde\bfphi_{n}, \tilde{W}_{n}, \tilde\nu_n]$, $n\in\N$, and $[\tilde\eta,
[\tilde\eta]_l \tilde\bfv,\tilde\bfv,  \overline{p_\delta(\tilde \eta)} , \tilde\bfphi, \tilde{W}, \tilde\nu]$
%\begin{align*}
%&\left[\tilde\eta_n,
%\tilde\eta_n \tilde\bfv_n,\tilde\bfv_n,  \tilde{W}^{n}, \tilde\nu_n\right],\,\, n\in\N,\quad \left[\right],
%\end{align*} 
such that
\begin{itemize}
    \item [(a)] For all $n \in \N$ the laws of
    $[\tilde\eta_n,
[\tilde\eta_n]_l \tilde\bfv_n,\tilde\bfv_n, p_\delta( \tilde \eta_n ) , \tilde\bfphi_{n}, \tilde{W}_{n}, \tilde\nu_{n}]$ \\ and $[ \eta_n
,[\eta_n]_l \bfv_n,\bfv_n,  p_\delta( \tilde \eta_n ) , \bfphi, {W}, \delta_{[\eta_n, \bfv_n,\nabla\bfv_n]}]$ on $\mathfrak{X}$ coincide;
%    \item[(b)] The law of $$(\Tilde{\varrho},\Tilde{\vec{m}}, \Tilde{ S},\Tilde{\vec U},\Tilde{P},\Tilde{C},\Tilde{Q}, \Tilde{W})$$ is a Radon measure on $(\mathfrak{X},\mathscr{B}_{\mathfrak{X}});$
    \item[(b)] The convergence 
$[\tilde\eta_n,[\tilde\eta_n]_l \tilde\bfv_n,\tilde\bfv_n,  p_\delta( \tilde \eta_n ) , \tilde\bfphi_{n}, \tilde{W}_{n},  \tilde\nu_{n}]\to [\tilde\eta,
[\tilde\eta ]_l \tilde\bfv,\tilde\bfv, \overline{p_\delta(\tilde \eta)} ,  \tilde\bfphi, \tilde{W},  \tilde\nu]$ holds true  in the topology of $\mathfrak{X}$, specifically, 
    \begin{equation}\label{49}
        \begin{cases}
%        \Tilde{\varrho}_{0,h_m} \to \Tilde{\varrho}_0 \,\, \text{in}\, L^{\gamma}(\T), \\
%        \Tilde{\varrho}_{0,h_m}\Tilde{\vec{u}}_{0,h_m} \to \Tilde{\vec{m}}_{0} \, \, \text{in}\, \, L^{\frac{2\gamma}{\gamma+1}}(\T),\\
%        \Tilde{ S}_{0,h_m} \to \Tilde{S_0} \,\, \text{in }\,\, C([0,T];W^{-4,2})\cap C_w(0,T; L^{\gamma}(\T)),\\
         %\color{red} \Tilde{ S}_{0,h_m} \to \overline{\Tilde{S_0}} \,\,
        %\text{in} \,\, L_{w^*}^{\infty}(\T;\mathcal{P}(A))\\
        \Tilde{\eta}_n \to \Tilde{\eta} \,\, \text{in }\,\,  C_{\rm{weak}}\big([0,T]; L^\Gamma(\mt)\big),\\
%       \Tilde{\varsigma}_{n,L} \rightharpoonup \Tilde{\varsigma}_L \,\, \text{in }\,\,L^{\Gamma+1}((0,T)\times \mt),\quad \text{for all }L\in\N,\\
  [\Tilde{\eta}_n ]_l\Tilde{\bfv}_n \to [\Tilde{\eta}]_l \Tilde{\bfv}  \,\, \text{in }\,\,  C_{\rm{weak}}\big([0,T]; L^{2\frac{\Gamma}{\Gamma+1}}(\mt)\big),\\
   \Tilde{\bfv}_n \rightharpoonup \Tilde{\bfv} \,\, \text{in }\,\,  L^2\big(0,T;W^{1,2}(\mt)),\\
   p_\delta( \tilde \eta_n ) \to  \overline{p_\delta(\tilde \eta)} \ \mbox{in}\ 
   \left( L^\infty(0,T; \mathcal{M}(\TN)), w^* \right) \\
   \tilde\bfphi_{n}\to\tilde\bfphi\,\, \text{in} \,\,  C^{\alpha}([0,T];C^{2}(\TN,\TN)),\\
         \Tilde{W}_{n} \to \Tilde{W} \,\, \text{in} \,\,  C([0,T];R^{K}),\\
% \tilde{\mathfrak t}^n_L\rightarrow  \tilde{\mathfrak t}_L\,\,\text{in}\,\,\R,\quad \text{for all }L\in\N,\\
 \tilde\nu_{n}\to \tilde\nu \,\, \text{in} \,\,(L^\infty((0,T) \times \mt; {\rm Prob}( R^{1+N+N^2})),w^*),
        \end{cases}
    \end{equation}
    $\Tilde{\p}$-a.s.;
    \item[(c)]  For any Carath\'eodory function $H =H(t,x,\eta,\bfw, \bfW)$ where $(t,x)\in (0,T)\times\TN$, $ (\eta,\bfw,\bfW)\in R^{1+N+N^2}$, satisfying for some $q_1,q_2>0$ the growth condition 
    \[
    H(t,x,\eta,\bfv, \bfW)\lesssim 1 +|\eta|^{q_1}+|\bfw|^{q_2}+|\bfW|^{q_2}
    \]
    uniformly in $(t,x)$, we denote by $\overline{H(\tilde\eta,\tilde{\bfv}, \nabla\tilde{\bfv})}(t,x) =\langle \tilde\nu(t,x),H \rangle$. Then
    \[
    H(\tilde\eta_{n},\tilde{\bfv}_n, \nabla\tilde{\bfv}_n) \rightharpoonup \overline{H(\tilde\eta,\tilde{\bfv}, \nabla\tilde{\bfv})}\quad\text{in}\,\,L^{k}((0,T)\times\TN)
    \]
    holds $\tilde{\p}$-a.s. as  $n \to \infty$ provided $1 < k\leq\frac{{\Gamma}}{q_1}\wedge \frac{2}{q_2}$;
    \item[(d)] $\tilde\bfphi_{n}$, $n\in\N$, and $\tilde\bfphi$ are flows of measure preserving $C^{\infty}$-diffeomorhisms on $\TN$, $\tilde\bfphi_{n}$ is associated to $\tilde{W}_{n}$ via \eqref{eq:flowsde} and similarly $\tilde\bfphi$ solves \eqref{eq:flowsde} with $\tilde W$. Furthermore, the inverse flows satisfy
    $$
    \tilde\bfphi_{n}^{-1}\to \tilde\bfphi^{-1}  \,\, \text{in} \,\,  C^{\alpha}([0,T];C^{2}(\TN,\TN))
    $$
    $\Tilde{\p}$-a.s. 
  
\end{itemize}
\end{Proposition}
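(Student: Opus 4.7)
The plan is to apply Jakubowski's extension of the Skorokhod representation theorem \cite{jakubow} to the family of joint laws
\[
\mathcal{L}\left[\eta_n,[\eta_n]_l\bfv_n,\bfv_n,p_\delta(\eta_n),\bfphi,W,\delta_{[\eta_n,\bfv_n,\nabla\bfv_n]}\right],\qquad n\in\N,
\]
after verifying tightness on the quasi-Polish space $\mathfrak{X}$. The equality of laws in (a) and the a.s.\ convergences in (b) are then immediate consequences of Jakubowski's theorem. The remaining work consists of checking (c) and (d).

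For the tightness step, I would use the uniform estimates in \eqref{eq:enn} together with \eqref{new}. The $L^\infty(0,T;L^\Gamma(\mt))$ bound on $\eta_n$ combined with the continuity equation (which gives a uniform estimate on $\partial_t\eta_n$ in a negative Sobolev space) yields tightness in $\mathfrak{X}_\eta$ by an Aubin--Lions--type argument; analogously the momentum equation produces tightness of $[\eta_n]_l\bfv_n$ in $\mathfrak{X}_{\eta\bfv}$. Tightness of $\bfv_n$ in the weak topology $\mathfrak{X}_{\bfv}$ follows from \eqref{new} via Chebyshev's inequality, the pressure $p_\delta(\eta_n)$ is tight in $\mathfrak{X}_p$ by the uniform $L^\infty(L^{\Gamma/\gamma})$ bound, and the Young measures $\delta_{[\eta_n,\bfv_n,\nabla\bfv_n]}$ are tight in $\mathfrak{X}_\nu$ since their supports are contained in balls of controlled radius in light of the energy estimates. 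The flow $\bfphi$ is $n$-independent with law tight on $\mathfrak{X}_\bfphi$ by \eqref{eq:3010}, and $W$ is a Wiener process, hence tight in $\mathfrak{X}_W$. These are the arguments already carried out in \cite[Section~4.5]{BrFeHobook}, to which I would refer for the routine details.

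For property (c), the key observation is that the convergence $\tilde\nu_n\to\tilde\nu$ in the weak-$*$ topology of $L^\infty((0,T)\times\mt;{\rm Prob}(R^{1+N+N^2}))$ together with the equi-integrability provided by the uniform $L^\Gamma$--$L^2$ bounds (transported to the new probability space via equality of laws) permits to pass to the limit against any Carath\'eodory test function $H$ with the stated growth: the family $H(\tilde\eta_n,\tilde\bfv_n,\nabla\tilde\bfv_n)$ is uniformly bounded in $L^k$ by H\"older's inequality, hence weakly precompact, and any weak limit is identified with $\langle\tilde\nu,H\rangle$ via the standard Young measure duality. This is a direct transcription of the corresponding argument in \cite[Section~2.8]{BrFeHobook}.

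The main obstacle is (d), the identification of the SDE satisfied by $(\tilde\bfphi_n,\tilde W_n)$ and by $(\tilde\bfphi,\tilde W)$ on the new probability space. For the approximate pair, the joint law of $(\bfphi,W)$ coincides with that of $(\tilde\bfphi_n,\tilde W_n)$; since the property of being a (Stratonovich) strong solution of \eqref{eq:flowsde} can be characterised via vanishing of certain functionals of the joint trajectories (this is the standard martingale identification technique), equality of laws transfers it to the tilded pair, and measure preservation together with the $C^\infty$-diffeomorphism property pass through as pointwise properties on pathspace. To obtain the limit SDE for $\tilde\bfphi$ I would pass to the limit in the It\^o formulation of \eqref{eq:flowsde}, using the uniform convergence $\tilde\bfphi_n\to\tilde\bfphi$ in $\mathfrak{X}_\bfphi$ and $\tilde W_n\to\tilde W$ in $\mathfrak{X}_W$ and relying on the smoothness of the $\bfQ_k$ to control the It\^o--Stratonovich correction; the convergence of the stochastic integral $\int_0^t\bfQ_k(\tilde\bfphi_n(\cdot,s))\dd\tilde W_n^k$ is handled by the now classical argument of Debussche--Glatt-Holtz--Temam (see \cite[Chapter~2.6]{BrFeHobook}). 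The a.s.\ convergence of the inverse flows $\tilde\bfphi_n^{-1}\to\tilde\bfphi^{-1}$ in $C^\alpha([0,T];C^2(\TN,\TN))$ then follows from the smoothness and uniform convergence of $\tilde\bfphi_n$ by the implicit function theorem together with the uniform $C^2$ bound.
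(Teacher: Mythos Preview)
Your proposal is correct and follows essentially the same approach as the paper: the paper does not give a formal proof of this proposition at all, but simply states that it follows from Jakubowski's version of the Skorokhod representation theorem \cite{jakubow}, referring to \cite[Section~2.8]{BrFeHobook} for property (c) and to \cite[Section~4.5]{BrFeHobook} for the tightness arguments. Your write-up is in fact more detailed than the paper's treatment, in particular for item (d), which the paper does not address explicitly; your martingale identification argument for the SDE on the new probability space and the passage to the limit via the convergence lemma for stochastic integrals is the standard way to justify this point.
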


To guarantee adaptedness of random variables and to ensure that the stochastic integral is well-defined on the new probability space we introduce filtrations for correct measurability. 
Let $\tilde{\FF}_t$ and $\tilde{\FF}_{t}^{n}$ be the $\tilde{\p}$-augmented filtration of the corresponding random variables from Proposition \ref{skorokhod}, i.e.
\begin{align*}
\tilde{\mathfrak F}_t &= \sigma \bigg(\sigma[\rr\tilde\varrho]\cup\sigma_t[\tilde\bfv]\cup\sigma[\rr\tilde W]\cup \{ \mathcal{N} \in \tilde{\mathfrak F};\tilde{\p}(\mathcal{N})=0\}\bigg), t\geq 0,\\
\tilde{\mathfrak F}_{t}^{n}&=\sigma \bigg(\sigma[\rr\tilde\varrho_n]\cup\sigma_t[\tilde\bfv_n]\cup\sigma[\rr\tilde W^n]\cup\{ \mathcal{N} \in \tilde{\mathfrak F};\tilde{\p}(\mathcal{N})=0\}\bigg), t\geq 0.
\end{align*}
Here $\rr$ denotes the restriction operator to the interval $[0,t]$ on the path space and $\sigma_t$ denotes the history of a random distribution\footnote{For a  random distribution $\bfV$, the family of $\sigma$-fields $(\sigma_{t}[\bfV])_{t\geq 0}$  given by
 \begin{equation*}
     \sigma_t[\bfV]:= \bigcap_{s>t}\sigma\left(\bigcup_{\varphi\in C_c^{\infty}(Q;R^N)}\{\langle \bfV, \varphi \rangle <1 \}\cup \{\mathcal N\in \FF, \p(\mathcal N)=0\} \right)
 \end{equation*}
 is called the history of $\bfV$. In fact, any random distribution is adapted to its history, see \cite[Chap. 2.2]{BrFeHobook}.}.

Since
\eqref{sys-apL} is a random PDE and no stochastic integral appears one easily checks that all quantities are measurable on the paths space. Hence we conclude
that \eqref{sys-apL} continues to holds on the new probability space and the same applies to the energy balance \eqref{energy}. In particular, we have
\begin{align*}
\int_{\mt} [\tilde\eta_n]_l \tilde\bfv_n&\cdot \bfpsi\dx\bigg|_{s=0}^{s=t} -\int_0^{t}\int_{\mt}[\bfA^\top_{\tilde\bfphi_n}\tilde\eta_n \tilde\bfv_n]_l\otimes \tilde\bfv_n:\nabla \bfpsi\dxs
\\
&+\int_0^{t}\int_{\mt}\mathbb S(\nabla^{\tilde\bfphi_n} \tilde\bfv_n):\nabla^{\tilde\bfphi_n} \bfpsi \dxs-\int_0^{t}\int_{\mt}
p_\delta(\tilde\eta_n)\,\Div^{\tilde\bfphi_n} \bfpsi \dxs\\
&=\frac{1}{2n}\int_0^{t}\int_{\mt}[\nabla\tilde\eta_n]_l\nabla\tilde\bfv_n\cdot\bfpsi\dxs
\end{align*} 
$\Tilde{\mathbb P}$-a.s. for all $\bfpsi\in C^\infty(\mt,
R^{N})$. 
By virtue of the convergences \eqref{49}, we may pass to the limit in all terms 
 obtaining
\begin{align}\label{eq:momdWB}
\begin{aligned}
\int_{\mt} [\tilde\eta]_l \tilde\bfv&\cdot \bfpsi\dx\bigg|_{s=0}^{s=t} -\int_0^{t}\int_{\mt}[\bfA_{\tilde\bfphi}^\top\tilde\eta \tilde\bfv]_l\otimes \tilde\bfv:\nabla \bfpsi\dxs
\\
&+\int_0^{t}\int_{\mt}\mathbb S(\nabla^{\tilde\bfphi} \tilde\bfv):\nabla^{\tilde\bfphi} \bfpsi \dxs- \int_0^{t} \left( \int_{\mt}
\Div^{\tilde\bfphi}\bfpsi\, {\rm d} \overline{p_\delta(\tilde\eta)} \right) \ds
=0
\end{aligned}
\end{align} 
$\Tilde{\mathbb P}$-a.s. for all $\bfpsi\in C^\infty(\mt,R^{N})$. Here the function $\tilde{\bfphi}$ is related to $\tilde W$ from Proposition \ref{skorokhod} in the same way the function ${\bfphi}$ is related to the original Wiener process $W$. 
We recall that we have only the bound $p_{\delta}(\tilde\eta_{n})$ in $L^\infty(0,T; L^1(\TN))$ hence the limit $\overline{p_\delta(\tilde\eta)}$ 
must be interpreted as a (non-negative) Radon measure in the spatial variable. In what follows, we establish with the help of a pressure estimate that it has the desired structure $\overline{p_\delta(\tilde\eta)}=p_{\delta}(\tilde\eta)$. 

%By Proposition \ref{skorokhod} (c) we have $\tilde{\mathbb P}$-a.s.\footnote{wrong stopping time?}
%\begin{align*}
%p_\delta(\tilde\eta_n)\rightharpoonup \overline{p_\delta(\tilde\varsigma_L)}\quad\text{in}\quad L^1((0,T\wedge\tilde{\mathfrak t}_L)\times\mt)
%\end{align*}
%for any $L\in\N$ with some limit function $\overline{p_\delta(\tilde\varsigma_L)}$.
%We clearly have for $M>L$ that $\overline{p_\delta(\tilde\varsigma_L)}=\overline{p_\delta(\tilde\varsigma_M)}$ 
%in $(0,T\wedge \tilde{\mathfrak t}_L)$ such that
%the limit function $\overline{p_\delta(\tilde\eta)}$ is well-defined in $(0,T)\times\mt$ using that $\tilde{\mathfrak t}_L\rightarrow\infty$ in probability as $L\rightarrow\infty$.
%

\subsection{Pressure estimates}\label{sec:pressure}

In order to obtain higher integrability of the pressure, we need a modification of the standard method based on  freezing  the coefficients 
depending on the test functions
	in time. We apply this method pathwise, meaning, throughout this section we always work with a fixed $\tilde\omega\in\tilde\Omega$. More precisely, we fix $\tilde\omega\in\tilde\Omega$ such that the convergences in \eqref{49} holds true. It is important to note that many of the below defined objects are random, i.e. they depend on $\tilde\omega$. We always specify this fact in the corresponding definitions, but for notational convenience we do not always spell out this dependence in subsequent computations.

Define
$$
L=L(\tilde\omega):=\|\tilde\bfphi(\tilde\omega)\|_{C^{\alpha}([0,T],C^{2}(\TN,\TN))}\vee\|\tilde\bfphi^{-1}(\tilde\omega)\|_{C^{\alpha}([0,T],C^{2}(\TN,\TN))}.
$$
Since $\tilde\bfphi_{n}(\tilde\omega)\to \tilde
\bfphi(\tilde\omega)$ and $\tilde\bfphi^{-1}_{n}(\tilde\omega)\to \tilde
\bfphi^{-1}(\tilde\omega)$ in $C^{\alpha}([0,T],C^{2}(\TN,\TN))$, there exists $n_{0}=n_{0}(\tilde\omega)\in\N$ so that for all $n\geq n_{0}(\tilde\omega)$ it holds
$$
\|\tilde\bfphi_{n}(\tilde\omega)\|_{C^{\alpha}([0,T],C^{2}(\TN,\TN))}\vee\|\tilde\bfphi_{n}^{-1}(\tilde\omega)\|_{C^{\alpha}([0,T],C^{2}(\TN,\TN))}\leq L(\tilde\omega)+1.
$$
Next, we find a (random) partition of the time interval $[0,T]$ with the following property: for a given $\varkappa=\varkappa(\tilde\omega) >0$\footnote{Eventually, we will choose $\varkappa$ depending on $L$.} we  find $M=M(\tilde\omega)=M(L(\tilde\omega),\varkappa(\tilde\omega))\in\N$ and
\begin{equation}\label{times}
    0=t_{0}<t_{1}=t_1(\tilde\omega)<t_{2}=t_2(\tilde\omega)<\cdots<t_M=T
\end{equation}
such that for all $i\in\{0,\dots,M(\tilde\omega)-1\}$ and all $n\geq n_{0}(\tilde\omega)$
\begin{equation}
    \|\bfA_{\tilde\bfphi_{{n}}({\tilde\omega},t)}-\bfA_{\tilde\bfphi_{{n}}({\tilde\omega},t_i(\tilde\omega))}\|_{L^\infty(\mt)} \leq \varkappa(\tilde\omega) \mbox{ \ \ for all \ \ }  t \in (t_i(\tilde\omega),t_{i+1}(\tilde\omega)).%,\quad t\leq \mathfrak t_L.
\end{equation}
Finally, for each $i$ we define $\tilde\Phi_{i,n}=\tilde\Phi_{i,n}(\tilde\omega)$ as a solution to
\begin{equation}\label{Bog}
    \div^{{\tilde\bfphi_{n}}(\tilde\omega,t_i(\tilde\omega))} \tilde\Phi_{i,n}(\tilde\omega) = \tilde\eta_n(\tilde\omega) - m_0,
\end{equation}
where $m_0:= \int_{\mt}\varrho_0\dx$. Specifically, we set $\tilde\Phi_{i,n}=-\nabla^{\tilde\bfphi_{n}(t_i)} (-\Delta^{\tilde\bfphi_{n}(t_i)})^{-1}(\tilde\eta_n-m_0)$. The freezing of time in the operator in (\ref{Bog}) is needed in order to be able to differentiate in time (note that the coefficients arising from $\tilde\bfphi_n$ via \eqref{eq:flowsde} are not differentiable in time).

In the sequel, we continue working with our fixed $\tilde\omega$ only, but we no longer write it down explicitly.
The standard elliptic theory yields
\begin{align}\label{eq:2311}
%\begin{aligned}
\|\nabla\tilde\Phi_{i,n}\|_{L^p(\mt, R^N)}&\leq\,c(L)\|\tilde\eta_n\|_{L^p(\mt)}
%,\quad p<N,\\
%\|\Phi_{i,n}\|_{L^{\infty}(\mt)}&\leq\,c(L)\big(\|\eta_n\|_{L^p(\mt)}+1\big),\quad p>N,
%\end{aligned}
\end{align}
uniformly in $n$ for $t \in (t_i,t_{i+1})$, provided $\tilde\eta(t,\cdot)\in L^p(\mt)$. 

Next, we compute
\begin{align} \label{ppp} 
\int_{t_i}^{t_{i+1}}\int_{\mt}p_\delta(\tilde\eta_n)(\tilde\eta_n-m_0)\dxs&=\int_{t_i}^{t_{i+1}}\int_{\mt}p_\delta(\tilde\eta_n)\div^{\tilde\bfphi_{n}(t_i)}\tilde \Phi_{i,n} \dxs\\ \nonumber
&=\int_{t_i}^{t_{i+1}}\int_{\mt}p_\delta(\tilde\eta_n)\div^{\tilde\bfphi_{n}} \tilde\Phi_{i,n} \dxs\\ \nonumber &+\int_{t_i}^{t_{i+1}}\int_{\mt}p_\delta(\tilde\eta_n)\big(\div^{\tilde\bfphi_{n}(t_i)}-\div^{\tilde\bfphi_{n}}\big) \tilde\Phi_{i,n} \dxs.
\end{align}
The second term on the right hand side is controlled by
\begin{align*}
\sup_{(t_i,t_{i+1})}\|\bfA_{\tilde\bfphi_{n}}-\bfA_{\tilde\bfphi_{n}(t_i)}\|_{L^\infty(\mt)}&\int_{t_i}^{t_{i+1}}\|p_\delta(\tilde\eta_n)\|_{L^{\frac{\Gamma+1}{\Gamma}}(\mt)}\|\nabla \tilde\Phi_{i,n} \|_{L^{\Gamma+1}(\mt)}\ds\\
&\leq\,\varkappa c(L)  \int_{t_i}^{t_{i+1}} \|\tilde\eta_n\|_{L^{\Gamma+1}(\mt)}^{\Gamma+1}\ds.
\end{align*}
Since $L>0$ is fixed, we may choose $\varkappa$  small enough so  that this term can be absorbed by the left--hand side of \eqref{ppp}. 
The first term on the right-hand side of \eqref{ppp} can be controlled by means of the momentum equation:
\begin{align}\label{eq:2711}
\int_{t_i}^{t_{i+1}}\int_{\mt}
p_\delta(\tilde\eta_n)\,\Div^{\tilde\bfphi_{n}} \tilde\Phi_{i,n} \dxs
&=\int_{\mt}\big(\tfrac{1}{n}+ [\tilde\eta_n]_l\big) \tilde\bfv_n\cdot \tilde\Phi_{i,n}\dx\bigg|_{s=t_i}^{s=t_{i+1}}\\\nonumber
&+\int_{t_i}^{t_{i+1}}\int_{\mt}\big(\tfrac{1}{n}+ [\tilde\eta_n]_l\big) \tilde\bfv_n\cdot\partial_t\tilde\Phi_{i,n}\dxt\\
& -\int_{t_i}^{t_{i+1}}\int_{\mt}[\bfA^\top_{\tilde\bfphi_{n}}\tilde\eta_n \tilde\bfv_n]_l\otimes \tilde\bfv_n:\nabla\tilde\Phi_{i,n}\dxs\nonumber
\\
&+\int_{t_i}^{t_{i+1}}\int_{\mt}\mathbb S(\nabla^{\tilde\bfphi_{n}} \tilde\bfv_n):\nabla^{\tilde\bfphi_{n}} \tilde\Phi_{i,n} \dxs\nonumber\\
&-\tfrac{1}{2n}\int_{t_i}^{t_{i+1}}\int_{\mt}[\nabla\tilde\eta_n]_l\nabla\tilde\bfv_n\cdot\tilde\Phi_{i,n} \dxs\nonumber\\
&=:(\tt{I})^i+\dots+(\tt{V})^i.\nonumber
\end{align} 
%\begin{multline}\label{II-3}
% \int_{t_i}^{t_{i+1}} \int     \partial_t ((\frac{1}{n}+[\eta]_l ) v) \Phi dxdt =\\
% \left. \int     ((\frac{1}{n}+[\eta]_l ) v) \Phi dx\right|^{t_{i+1}}_{t_i} -  \int_{t_i}^{t_{i+1}} \int      ((\frac{1}{n}+[\eta]_l ) v) \partial_t \Phi\,  dx dt.
%\end{multline}
By \eqref{eq:2311} and Poincar\' e's inequality we have 
\begin{align*}
(\tt{I})^i
%&=\int_{\mt}\big(\tfrac{1}{n}+ [\eta_n]_l\big) \bfv_n\cdot \Phi_{i,n}\dx\bigg|_{s=0}^{s=T\wedge\mathfrak t_L}\\
&\leq \sup_{(t_{i},t_{i+1})}\frac{1}{\sqrt{n}} \big\| \tfrac{1}{\sqrt{n}}\tilde\bfv_n\big\|_{L^2(\mt)}
    \|\tilde\Phi_{i,n}\|_{L^2(\mt)}\\
    &+ \sup_{(t_{i},t_{i+1})}\| \sqrt{[\tilde\eta_n]_l} \tilde\bfv_n\|_{L^2(\mt)} \|\sqrt{[\tilde\eta_n]_l}\|_{L^4(\mt)}\|\tilde\Phi_{i,n}\|_{L^4(\mt)}\\
&\leq\,c(L) \sup_{(t_{i},t_{i+1})}\frac{1}{\sqrt{n}} \big\| \tfrac{1}{\sqrt{n}}\tilde\bfv_n\big\|_{L^2(\mt)}
   \|\tilde\eta_n\|_{L^2(\mt)}\\&+ c(L)\sup_{(t_{i},t_{i+1})}\| \sqrt{[\tilde\eta_n]_l} \tilde\bfv_n\|_{L^2(\mt)} \|\tilde\eta_n\|_{L^2(\mt)}^{\frac{1}{2}}\|\tilde\eta_n\|_{L^4(\mt)}.
\end{align*}
The right-hand side is uniformly bounded by \eqref{eq:enn} since $\Gamma\geq 4$. Here, we have used that \eqref{eq:enn}
as well as \eqref{new} hold on the new probability space due to Proposition  \ref{skorokhod}.

For the term $(\tt II)^i$ we note that 
\begin{align}
\begin{aligned}
   \partial_t \tilde\Phi_{i,n}&=-\nabla^{\tilde\bfphi_{n}(t_i)} (-\Delta^{\tilde\bfphi_{n}(t_i)})^{-1}\partial_t \tilde\eta_n\\
   &= \nabla^{\tilde\bfphi_{n}(t_i)} (-\Delta^{\tilde\bfphi_{n}(t_i)})^{-1}\div^{\tilde\bfphi_{n}}(\tilde\eta_n \tilde \bfv_n) - \tfrac{1}{n}
    \nabla^{\tilde\bfphi_{n}(t_i)} (-\Delta^{\tilde\bfphi_{n}(t_i)})^{-1}\Delta \tilde\eta_n
\end{aligned}
\end{align}
in $(t_i,t_{i+1})$ using \eqref{sys-apL}. By elliptic theory we have
\begin{align*}
   \|\partial_t \tilde\Phi_{i,n}\|_{L^2(\mt)}\leq\,c(L)\|\tilde\eta_n  \tilde\bfv_n\|_{L^2(\mt)}+\,c(L)\big\|\tfrac{1}{n}\nabla\tilde\eta_n\big\|_{L^2(\mt)}
\end{align*}
in $(t_i,t_{i+1})$.
Thanks to the choice of large $\Gamma$ we are ensured that $\tilde\eta_n \in L^\infty_t L^3_x$ uniformly by \eqref{eq:enn}, so we deduce that
\begin{equation}
   \tilde \eta_n \tilde\bfv_n \in L^2(I;L^2(\mt))
    \mbox{ \ \ and \ \ }
    \frac{1}{\sqrt{n}} \nabla \tilde\eta_n\in L^2(I;L^2(\mt)) \;\;(\mbox{uniformly in $n$})
\end{equation}
thus    $\partial_t \tilde\Phi_{i,n} \in L^2(t_i,t_{i+1};L^2(\TN))$ uniformly as well.

Similarly, we have
$(\frac{1}{n}+[\tilde\eta_n]_l ) \tilde\bfv_n \in L^2(I;L^2(\mt))
   $ uniformly such that $({\tt II})^i$ is uniformly bounded.% by \eqref{eq:2311} (with $p=2$).

We proceed by
\begin{align*} 
({\tt III})^i& = 
    - \int_{t_i}^{t_{i+1}} 
    \int_{\TN}  [\bfA^\top_{\tilde\bfphi_n} \tilde\eta_n\tilde \bfv_n]_l \otimes \tilde\bfv_n :\nabla\tilde\Phi_{i,n} \dx\ds
    \\ 
    &\leq \,c(L) \|\tilde\bfv_n\|^2_{L^2(t_i,t_{i+1};L^6(\mt))}
    \|\tilde\eta_n\|^2_{L^\infty(t_i,t_{i+1};L^{3}(\mt))} 
\end{align*}
using again \eqref{eq:2311} (with $p=3$). This is again uniformly bounded by \eqref{eq:enn} because of the Sobolev embedding 
$W^{1,2} \hookrightarrow L^6$, $N \leq 3$.  We can argue similarly for $({\tt IV})^i$.

Next, since $\Gamma > N$, and in view of \eqref{eq:w12}, we can bound
\begin{align*}({\tt V})^i&
    \leq \frac{1}{\sqrt{n}}\int_{t_i}^{t_{i+1}} \big\|\tfrac{1}{\sqrt{n}} \nabla\tilde\eta_n\big\|_{L^2(\mt)} \|\nabla\tilde\bfv_n\|_{L^2(\mt)} \|\tilde\Phi_{i,n}\|_{L^\infty(\mt)}\ds\\
   &\leq \frac{c(L)}{\sqrt{n}}\int_{t_i}^{t_{i+1}} \big\|\tfrac{1}{\sqrt{n}} \nabla\tilde\eta_n\big\|_{L^2(\mt)} \|\nabla^{\tilde\bfphi_{n}}\tilde\bfv_n\|_{L^2(\mt)} \|\tilde\eta_{n}\|_{L^\Gamma(\mt)}\ds
\end{align*}
using a generalized version of Korn's inequality (see e.g.    \cite[Chapter 11, Theorem 11.23]{FeNo6A}) and  \eqref{eq:2311} (with $p=\Gamma$).  This is again uniformly bounded by \eqref{eq:enn}.

Summing up the previous estimates and using $\eta^\Gamma \leq\,\delta^{-1} p_\delta(\eta)$ we obtain
\begin{equation}
    \int_{t_i}^{t_{i+1}} \int_{\mt} \tilde\eta_n^{\Gamma+1} \dx\ds \leq \,c(L,\mathbb Q,\varrho_0,\bfq_0)
\end{equation}
uniformly in $n$ and $l$.
Hence summing over $i$ we get
\begin{equation}\label{47}
    \int_0^{T} \int_{\mt} \tilde\eta_n^{\Gamma+1}\dx\ds \leq \,c(L,\mathbb Q,\varrho_0,\bfq_0).
\end{equation}
Note that this bound is independent of $n$ and $l$,
 but strongly depends on $L$ and $\delta$ as well as on the random element $\tilde\omega$.

%\smallskip 
%
%All together, we conclude
%
%
%\medskip
%
%{\bf Lemma xx} {\it Let $T>0$. Let $l,n >0$, then solution to the approximative system (\ref{sys-apL}) satisfies the below bound
%    \begin{equation}
%    \sup_{t\in (0,T)} \int \left( (\frac{1}{n} +[\eta]_l) v^2 + \eta^\gamma \right)  dx + 
%    \int_0^T \int (|\nabla_X v|^2 + \frac{1}{n}|\nabla \eta|^2 +\eta^{\Gamma+1}) dx dt \leq C(T,initial\,data)
%\end{equation}
%where the RHS depends only on $\delta$ from (\ref{f7}).
%}

\subsection{Strong convergence of the density}
\label{subsec:strong}

Throughout this section, without specifying it explicitly, we continue working with the $\tilde\omega$ fixed in Section~\ref{sec:pressure} and we also make use of the time points $t_{i}(\tilde\omega)$, $i=1,\dots,M(\tilde\omega)$ defined there.

First observe that the uniform bound \eqref{47} yields
\begin{align}\label{eq:ll}
	p_\delta(\tilde\eta_n(\tilde\omega))\rightharpoonup \overline{p_\delta(\tilde\eta(\tilde\omega))}\quad\text{in}\quad L^1((0,T)\times\mt).
\end{align}
In particular, the limit is an $L^{1}$-function, not only a Radon measure. This is a direct consequence of  the fundamental theorem of Young measures  similar to Proposition~\ref{skorokhod} but applied only to $\tilde\omega$ and based on the improved integrability provided by the pressure estimate \eqref{47}. More precisely, this approach yields a subsequence (possibly depending on $\tilde\omega$) and the convergence to some limit
characterised by the corresponding Young measure. However, this Young measure must coincide with the evaluation $\tilde\nu(\tilde\omega)$ of the limit Young measure obtained in Proposition~\ref{skorokhod} and hence is independent of the subsequence. In other words,  we obtain \eqref{eq:ll} without passing to an $\tilde\omega$-dependent subsequence.

In order to conclude  the proof of Theorem~\ref{thm:mainl} we shall show that 
$\overline{p_\delta(\tilde\eta)}=p_\delta(\tilde\eta)$ 
in \eqref{eq:momdWB}.
The key is the analysis of the effective viscous flux, which is in our setting formally given by
\begin{equation*}
(\lambda + 2 \mu) \Div^{\tilde\bfphi}\tilde\bfv-p_\delta(\tilde\eta)=\tilde G
\end{equation*}
with
\begin{align} 
%G&:=\Div^{\tilde\bfphi}\tilde\bfv-p_\delta(\tilde\eta)\nonumber\\
\label{G1} \tilde G:&= (-\Delta^{\tilde\bfphi})^{-1}  \div^{\tilde\bfphi}\left( \partial_t \big((\tfrac{1}{n}+[\tilde\eta]_l ) \tilde\bfv\big) +
    \div([\bfA^\top_{\tilde\bfphi}\tilde\eta \tilde\bfv]_l \otimes \tilde\bfv ) 
    -\tfrac{1}{2n} [\nabla \tilde\eta]_l \nabla \tilde\bfv \right)\\
\label{G2}&= 
    (-\Delta^{\tilde\bfphi})^{-1}  \div^{\tilde\bfphi} \Big(  \big(\tfrac{1}{n}+[\tilde\eta]_l \big)  \partial_t \tilde\bfv +
    [\bfA_{\tilde\bfphi}^\top\tilde\eta \tilde\bfv]_l \cdot \nabla\tilde \bfv +\tfrac{1}{n} [\Delta \tilde\eta]_l \tilde\bfv 
    -\tfrac{1}{2n} [\nabla \tilde\eta]_l \nabla \tilde\bfv \Big),
\end{align}
where the equality in the second line follows form the equation of continuity.

We aim to prove
\begin{align}\label{eq:fluxpsi}
\begin{aligned}
\int_0^{\tau} \int_{\mt} &\big(p_\delta(\tilde\eta_n)-(\lambda+2\mu)\Div^{\tilde\bfphi_n} \tilde{\bfv}_n\big)\,\tilde\eta_n\dxt\\&\longrightarrow \int_0^{\tau}\int_{\mt} \big( \overline{p_\delta(\tilde\eta)}-(\lambda+2\mu)\Div^{\tilde\bfphi} \tilde{\bfv}\big)\,\tilde\eta\dxt
\end{aligned}
\end{align}
as $n\rightarrow\infty$ for any fixed $0 < \tau \leq T$. The proof is based on rearranging the terms in \eqref{eq:2711} and passing to the limit in
the corresponding terms on the right-hand side. Without loss of generality, we may suppose that $t_1 = 0$, $t_M = \tau$.

First, we shall show for $i\in\{0,\dots,M-1\}$ that
\begin{align}\label{eq:2711a}
\lim_{n\rightarrow\infty}\int_{\mt}\big(\tfrac{1}{n}+ [\tilde\eta_n]_l\big) \tilde\bfv_n\cdot \tilde\Phi_{i,n}\dx\bigg|_{s=t_i}^{s=t_{i+1}}&=\int_{\mt}[\tilde\eta]_l \tilde\bfv\cdot \tilde\Phi_{i}\dx\bigg|_{s=t_i}^{s=t_{i+1}}\\\lim_{n\rightarrow\infty}\int_{t_i}^{t_{i+1}}\int_{\mt}\Big(\big(\tfrac{1}{n}+ [\Tilde\eta_n]_l\big) \tilde\bfv_n\cdot\partial_t\tilde\Phi_{i,n}&+[\bfA^\top_{\tilde\bfphi_n}\tilde\eta_n \tilde\bfv_n]_l\otimes \tilde\bfv_n:\nabla\tilde\Phi_{i,n}\Big)\dxt\nonumber\\
=\int_{t_i}^{t_{i+1}}\int_{\mt}\Big([\tilde\eta]_l \tilde\bfv\cdot\partial_t\tilde\Phi_{i}&+[\bfA^\top_{\tilde\bfphi}\tilde\eta \tilde\bfv]_l\otimes \tilde\bfv:\nabla\tilde\Phi_{i}\Big)\dxt,\label{eq:2711b}\\
% \lim_{n\rightarrow\infty}\int_{t_i\wedge \mathfrak t_L}^{t_{i+1}\wedge \mathfrak t_L}\int_{\mt}[\tilde\eta_n \tilde\bfv_n]_l\otimes \tilde\bfv_n:\nabla^{\bfphi}\tilde\Phi_{i,n}\dxs&=
%\int_{t_i\wedge \mathfrak t_L}^{t_{i+1}\wedge \mathfrak t_L}\int_{\mt}[\tilde\eta \tilde\bfv]_l\otimes \tilde\bfv:\nabla^{\bfphi}\tilde\Phi_{i}\dxs\label{eq:2711c}
%\\
\lim_{n\rightarrow\infty}\tfrac{1}{2n}\int_{t_i}^{t_{i+1}}\int_{\mt}[\nabla\tilde\eta_n]_l\nabla\tilde\bfv_n\cdot\tilde\Phi_{i,n} \dxs&=0,\label{eq:2711d}
\end{align} 
where $\tilde\Phi_i=\nabla^{\tilde\bfphi(t_i)}(-\Delta^{\tilde\bfphi(t_i)})^{-1}(\tilde\eta-m_0)$.
The convergences  \eqref{eq:2711a} and  \eqref{eq:2711d} follow directly from Proposition \ref{skorokhod}.
To see \eqref{eq:2711b}, we compute
\begin{align}
\int_{t_i}^{t_{i+1}}&\int_{\mt}\Big([\tilde\eta_n]_l \tilde\bfv_n\cdot\partial_t\tilde\Phi_{i,n}+[\bfA^\top_{\tilde\bfphi_n}\tilde\eta_n \tilde\bfv_n]_l\otimes \tilde\bfv_n:\nabla\tilde\Phi_{i,n}\Big)\dxs\nonumber\\
&=-\int_{t_i}^{t_{i+1}}\int_{\mt}[\tilde\eta_n]_l \tilde\bfv_n\cdot\nabla^{\tilde\bfphi_n(t_i)}(-\Delta^{\tilde\bfphi_n(t_i)})^{-1}\Div^{\tilde\bfphi_n}(\tilde\eta_n\tilde\bfv_n)\dxs\nonumber\\
&+\tfrac{1}{n}\int_{t_i}^{t_{i+1}}\int_{\mt}[\tilde\eta_n]_l \tilde\bfv_n\cdot\nabla^{\tilde\bfphi_n(t_i)}(-\Delta^{\tilde\bfphi_n(t_i)})^{-1}\Delta\tilde\eta_n\dxs\nonumber\\
&+\int_{t_i}^{t_{i+1}}\int_{\mt}[\bfA^\top_{\tilde\bfphi_n}\tilde\eta_n \tilde\bfv_n]_l\otimes \tilde\bfv_n:\nabla\nabla^{\tilde\bfphi_n(t_i)}(-\Delta^{\tilde\bfphi_n(t_i)})^{-1}\tilde\eta_n\dxs\label{eq:comm}
%&=-\int_{t_i}^{t_{i+1}}\int_{\mt}[\tilde\eta_n]_l \tilde\bfv_n\cdot\nabla^{\tilde\bfphi_n(t_i)}(-\Delta^{\tilde\bfphi_n(t_i)})^{-1}\Div^{\tilde\bfphi_n(t_i)}(\tilde\eta_n\tilde\bfv_n)\dxs\nonumber\\
%&+\int_{t_i}^{t_{i+1}}\int_{\mt}[\tilde\eta_n \tilde\bfv_n]_l\otimes \tilde\bfv_n:\nabla^{\tilde\bfphi(t_i)}\nabla^{\tilde\bfphi_n(t_i)}(-\Delta^{\tilde\bfphi_n(t_i)})^{-1}\tilde\eta_n\dxt\nonumber\\
%&+\int_{t_i}^{t_{i+1}}\int_{\mt}[\tilde\eta_n]_l \tilde\bfv_n\cdot\nabla^{\tilde\bfphi_n(t_i)}(-\Delta^{\tilde\bfphi_n(t_i)})^{-1}(\Div^{\tilde\bfphi_n(t_i)}-\Div^{\tilde\bfphi_n})(\tilde\eta_n\tilde\bfv_n)\dxs\nonumber\\
%&+\int_{t_i}^{t_{i+1}}\int_{\mt}[\tilde\eta_n \tilde\bfv_n]_l\otimes \tilde\bfv_n:\big(\nabla^{\tilde\bfphi_n}-\nabla^{\tilde\bfphi_n(t_i)}\big)\nabla^{\tilde\bfphi_n(t_i)}(-\Delta^{\tilde\bfphi_n(t_i)})^{-1}\tilde\eta_n\dxt.
\end{align}
The last two terms are arbitrarily small by \eqref{times}, while the regularising effect of $[\cdot]_l$ for fixed $l$ allows to pass to the limit in the first two terms as a consequence of  Proposition \ref{skorokhod} (Lemma \ref{lem:divcurl} is not needed at this stage). 

Noticing that
\begin{align*}
&\int_{t_i}^{t_{i+1}}\int_{\mt}p_\delta(\tilde\eta_n)\big(\div^{\tilde\bfphi_n(t_i)}-\div^{\tilde\bfphi_n}\big) \tilde\Phi_{i,n} \dxs\\\text{and}\,\, &\int_{t_i}^{t_{i+1}}\int_{\mt}\mathbb S(\nabla^{\tilde\bfphi_n} \tilde\bfv_n):\big(\nabla^{\tilde\bfphi_n}-\nabla^{\tilde\bfphi_n(t_i)}\big) \tilde\Phi_{i,n} \dxs,
\end{align*}
as well as
\begin{align*}
&\int_{t_i}^{t_{i+1}}\int_{\mt}p_\delta(\tilde\eta)\big(\div^{\tilde\bfphi(t_i)}-\div^{\tilde\bfphi}\big) \tilde\Phi_{i} \dxs\\\,\,\text{and}\,\,&\int_{t_i}^{t_{i+1}}\int_{\mt}\mathbb S(\nabla^{\tilde\bfphi} \tilde\bfv):\big(\nabla^{\tilde\bfphi}-\nabla^{\tilde\bfphi(t_i)}\big) \tilde\Phi_{i} \dxs
\end{align*}
are arbitrarily small by \eqref{times} (choosing $\varkappa$ accordingly) we obtain \eqref{eq:fluxpsi} after summation with respect to $i$.

In order to proceed we need to derive the renormalized equation of continuity. 
%\textcolor{blue}{Ed: 
%Here, I propose to say: Given the specific form of the equation of continuity \eqref{ER1} and integrability of $\tilde{\eta}$ we derive the renormalized equations of continuity by the standard regularization argument...Dominic: I agree that this can be done. However, the differential operator this not commute with the regularisaiton. So, one should say a little bit more. because of this I found the approach below easier.}	
For that purpose it is useful to return to the original variables $\tilde\varrho:=\tilde\eta\circ\tilde\bfphi^{{-1}}$ and
$\tilde\bfu:=\tilde\bfv\circ\tilde\bfphi^{{-1}}$ satisfying
\begin{align*}
\dd\tilde\varrho+\Div(\tilde\varrho\tilde\bfu)\dt&=\Div\big(\tilde\varrho\mathbb Q\big)\circ \dd \tilde W
\end{align*} 
 in the sense of distributions.
Arguing as in \cite{BFHZ} we can derive the renormalized version of the continuity equation
\begin{align*}
\dd\theta(\tilde\varrho)+\Div(\theta(\tilde\varrho)\tilde\bfu)\dt&=(\theta(\tilde\varrho)-\theta'(\tilde\varrho)\tilde\varrho)\div\tilde\bfu\dt+\Div\big(\theta(\tilde\varrho)\mathbb Q\big)\circ \dd \tilde W
\end{align*} 
for any $\theta\in BC^2([0,\infty))$ in the sense of distributions.
Transforming back we obtain
\begin{align}\label{eq:ren:delta}
\partial_t\theta(\tilde\eta)+\Div^{\tilde\bfphi}(\theta(\tilde\eta)\tilde\bfv)&=\big(\theta(\tilde\eta)-\theta'(\tilde\eta)\tilde\eta\big)\div^{\tilde\bfphi}\tilde\bfv
\end{align} 
for any $\theta\in BC^2([0,\infty))$ in the sense of distributions.

By the monotonicity of the mapping $\varrho\mapsto p_\delta(\varrho)$, we deduce 
from \eqref{eq:fluxpsi} and the convergences stated in Proposition \ref{skorokhod} that
\begin{align*}%\label{eq:fluxpsib}
(\lambda+2\mu)\liminf_{n\rightarrow\infty}& \int_0^{\tau} \int_{\mt} \big(\Div^{\tilde\bfphi_n} \tilde\bfv_n\,\tilde\eta_n -\Div^{\tilde\bfphi} \tilde\bfv\,\Tilde\eta\big)\dxt\\
=&\liminf_{n\rightarrow\infty}\int_0^{\tau} \int_{\mt} \big(p_\delta(\tilde\eta_n)- \overline{p_\delta(\tilde\eta)}\big)\big(\tilde\eta_n-\tilde\eta\big)\dxt\geq 0
\end{align*}
for any $0 < \tau \leq T$.
We conclude that
%\begin{align}\label{8.12}
%\overline{\Div^{\tilde\bfphi} \tilde\bfv\,\tilde\eta}\geq \Div^{\tilde\bfphi} \tilde\bfv\,\tilde\eta \quad\text{a.e. in }\quad I\times\mt,
%\end{align}
%where
%\begin{align*}
%\Div^{\tilde\bfphi_n} \tilde\bfv_n\,\tilde\eta_n\rightharpoonup \overline{\Div^{\tilde\bfphi} \tilde\bfv\,\tilde\eta}\quad\text{in}\quad L^1(I\times\mt),
%\end{align*}
\begin{equation} \label{8.12}
\liminf_{n\rightarrow\infty} \int_0^{\tau} \int_{\mt} \Div^{\tilde\bfphi_n} \tilde\bfv_n\,\tilde\eta_n \dxt	 \geq \int_0^{\tau} \int_{\mt} \Div^{\tilde\bfphi} \tilde\bfv\,\Tilde\eta \dxt,\ 0 < \tau \leq T.	
\end{equation}	

Now, we compute both integrals in
\eqref{8.12} by means of the corresponding continuity equations. Since $(\tilde\bfv_n,\tilde\eta_n)$ is a strong solution to
\eqref{sys-apL}$_1$ (which can be shown by parabolic maximum regularity theory) it is also a renormalized solution and we have
\begin{align*}
\partial_t\theta(\tilde\eta_n)+\Div^{\tilde\bfphi_n}(\theta(\tilde\eta_n)\tilde\bfv_n)&=(\theta(\tilde\eta_n)-\theta'(\tilde\eta_n)\tilde\eta_n)\Div^{\tilde\bfphi_n}\tilde\bfv_n+\tfrac{1}{n}\Delta\theta(\tilde\eta_n)-\tfrac{1}{n}\theta''(\tilde\eta_n)|\nabla\tilde\eta_n|^2
\end{align*} 
for any $\theta\in BC^2([0,\infty))$.
Choosing $\theta(z)=z\ln z$ and integrating in space-time we gain
\begin{align}\label{8.15}
\int_0^{\tau} \int_{\mt}\Div^{\tilde\bfphi_n} \tilde\bfv_{n}\,\tilde\eta_{n}\dxs \leq\int_{\mt}\varrho_0\ln(\varrho_0)\dx
-\int_{\mt}\tilde\eta_n(\tau)\ln(\tilde\eta_n(\tau)\dx
\end{align}
for any $0 < \tau \leq T$.
Similarly, equation \eqref{eq:ren:delta} yields 
\begin{align}\label{8.14}
\int_0^{\tau}  \int_{\mt}\Div^{\tilde\bfphi} \tilde\bfv\,\tilde\eta\dxs=\int_{\mt}\varrho_0\ln(\varrho_0)\dx
-\int_{\mt}\tilde\eta(\tau)\ln(\Tilde\eta(\tau))\dx.
\end{align}
Combining \eqref{8.12}--\eqref{8.14} we may infer that
% and using that
%\begin{align*}
%\lim_{n\rightarrow\infty}\bigg|\int_0^{t}\int_{\mt}\Div^{\tilde\bfphi_n} \tilde\bfv_n\,\tilde\eta_n\dxs-\int_0^{t}\int_{\mt}\Div^{\tilde\bfphi} \tilde\bfv_n\,\tilde\eta_n\dxs\bigg|=0
%\end{align*}
%$\tilde{\mathbb P}$-a-s. by \eqref{49}$_5$ shows
\begin{align*}
\limsup_{n\rightarrow\infty}\int_{\mt}\tilde\eta_n(t)\ln(\tilde\eta_n(t))\dx\leq \int_{\mt}\eta(t)\ln(\eta(t))\dx
\end{align*}
for any $t\in I$.
This gives the desired strong convergence $\tilde\eta_{n}\rightarrow\tilde\eta$ in $L^1(I\times\mt)$ by strict convexity of $z\mapsto z\ln z$. Consequently, we conclude $\overline{p_{\delta}(\tilde\eta)}=p_\delta(\tilde\eta)$ and the proof of Theorem \ref{thm:mainl} is complete.

\section{Proof of Theorem \ref{thm:main}}
\label{s:5}

The goal of this section is to remove the additional regularisation parameters $l$ and $\delta$ and thus conclude the proof of Theorem \ref{thm:main}.

\subsection{The limit $l\rightarrow0$}
The aim is to pass to the limit $l\rightarrow0$ in \eqref{i1Bl} obtaining the system
\begin{align} \label{i1Bdelta}
\partial_t \eta + \Div^{\bfphi} (\eta\bfv)  &=0,\\
\label{i2Bdelta}
\partial_t (\eta\bfv) + \Div \big(\bfA^\top_{\bfphi} \eta\bfv\otimes\bfv\big) -\Div^{\bfphi}\mathbb S(\nabla^{\bfphi} \bfv) + \nabla^{\bfphi} p_\delta(\eta)  &=0.
\end{align}
The solution to \eqref{i1Bdelta}--\eqref{i2Bdelta} is understood exactly as in Definition \ref{MD1B}
replacing $p$ by $p_\delta$. We aim to prove the following result.

\begin{Theorem}\label{thm:maindelta}
%Suppose that $\varrho_0\in L^\Gamma(\mt)$ and $\varrho_0^{-1/2}\bfq_0\in L^2(\mt)$ are deterministic initial data.
Suppose that $(\varrho_0,\bfq_0)\in L^\Gamma(\mt)\times L^{\frac{2\Gamma}{\Gamma+1}}(\mt)$ with $\varrho_0\geq0$ a.a.  and $\varrho_0^{-1/2}\bfq_0\in L^2(\mt)$ 
are given deterministic initial data. 
 Then there is a dissipative martingale solution to \eqref{i1Bdelta}--\eqref{i2Bdelta} with the initial condition $(\varrho_{0},\bfq_{0})$. In addition, the continuity equation holds in the renormalised sense, that is we have
\begin{align*}%\label{eq:condrendeltaWB}
\begin{aligned}
\int_{\mt}\theta(\eta)\psi\dx\bigg|_{s=0}^{s=t}&-
\int_0^t\int_{\mt}\theta(\eta) \bfv\cdot\nabla^{\bfphi}\psi\dxs=\int_0^t\int_{\mt}\big(\theta(\eta)-\theta'(\eta)\eta\big)\div^{\bfphi}\bfv\,\psi\dxs
\end{aligned}
\end{align*}
%\begin{align}\label{eq:ren:delta}
%\partial_t\theta(\tilde\eta)+\Div^{\tilde\bfphi}(\theta(\tilde\eta)\tilde\bfv)&=\big(\theta(\tilde\eta)-\theta'(\tilde\eta)\tilde\eta\big)\div^{\tilde\bfphi}\tilde\bfv
%\end{align} 
$\mathbb P$-a.s. for any $\theta\in BC^2([0,\infty))$.
\end{Theorem}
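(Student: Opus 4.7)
The plan is to mirror, with straightforward modifications, the scheme developed in Section~\ref{s:4} for the vanishing viscosity limit $n\to\infty$. I would start from the family of solutions $(\eta_l,\bfv_l)$ to \eqref{i1Bl}--\eqref{i2Bl} provided by Theorem~\ref{thm:mainl} on a common stochastic basis carrying a Wiener process $W$ and the associated flow $\bfphi$ from \eqref{eq:flowsde}. The energy inequality \eqref{eq:enedW} yields $l$-uniform pathwise bounds: $\eta_l\in L^\infty(0,T;L^\Gamma(\mt))$, $\sqrt{[\eta_l]_l}\bfv_l\in L^\infty(0,T;L^2)$, and, via \eqref{eq:w12} together with \eqref{eq:3010}, $\bfv_l\in L^2(0,T;W^{1,2}(\mt))$. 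The pressure estimate from Section~\ref{sec:pressure} transfers verbatim: using a random partition $0=t_0<\dots<t_M=T$ adapted to the time oscillation of $\bfA_\bfphi$, the test function $\Phi_{i,l}=-\nabla^{\bfphi(t_i)}(-\Delta^{\bfphi(t_i)})^{-1}(\eta_l-m_0)$, and the reorganization \eqref{eq:2711}, one obtains an $\tilde\omega$-dependent bound $\|\eta_l\|_{L^{\Gamma+1}((0,T)\times\mt)}\leq c(L,\mathbb Q,\varrho_0,\bfq_0)$ uniform in $l$; in fact the argument simplifies, since all terms containing $\tfrac{1}{n}$ drop out.

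With these bounds in hand I would next invoke the Jakubowski--Skorokhod theorem in complete analogy with Proposition~\ref{skorokhod}, applied to
\[
[\eta_l, [\eta_l]_l\bfv_l, \bfv_l, p_\delta(\eta_l), \bfphi, W, \delta_{[\eta_l,\bfv_l,\nabla\bfv_l]}],
\]
to produce, on a new probability space, copies $(\tilde\eta_l,\tilde\bfv_l,\tilde\bfphi_l,\tilde W_l)\to(\tilde\eta,\tilde\bfv,\tilde\bfphi,\tilde W)$ converging $\tilde{\mathbb P}$-a.s. Passage to the limit in the linear terms is immediate, and convolution-dependent terms are handled by the elementary commutator identity
\[
\int_\mt [\tilde\eta_l]_l\tilde\bfv_l\cdot\bfpsi\dx=\int_\mt\tilde\eta_l\tilde\bfv_l\cdot[\bfpsi]_l\dx\longrightarrow\int_\mt\tilde\eta\tilde\bfv\cdot\bfpsi\dx
\]
for $\bfpsi\in C^\infty(\mt,R^N)$, and analogously for $[\bfA^\top_{\tilde\bfphi_l}\tilde\eta_l\tilde\bfv_l]_l\otimes\tilde\bfv_l$, where the spatial smoothness of $\tilde\bfphi_l$ secured by Proposition~\ref{skorokhod}(d) provides the necessary uniformity. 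This yields a weak form of \eqref{i2Bdelta} with the pressure term replaced by a limit $\overline{p_\delta(\tilde\eta)}\in L^1(I\times\mt)$.

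The main obstacle, as in the previous limit, is the identification $\overline{p_\delta(\tilde\eta)}=p_\delta(\tilde\eta)$. I would reproduce the effective viscous flux analysis of Subsection~\ref{subsec:strong} with the frozen-coefficient fields $\tilde\Phi_{i,l}=-\nabla^{\tilde\bfphi_l(t_i)}(-\Delta^{\tilde\bfphi_l(t_i)})^{-1}(\tilde\eta_l-m_0)$, which by \eqref{eq:2311} are uniformly bounded in the relevant spaces. Summation over the intervals $(t_i,t_{i+1})$ leads, after absorbing freezing errors via the choice of $\varkappa$, to
\[
\lim_{l\to 0}\int_0^\tau\!\!\int_\mt\big(p_\delta(\tilde\eta_l)-(\lambda+2\mu)\Div^{\tilde\bfphi_l}\tilde\bfv_l\big)\tilde\eta_l\dxt=\int_0^\tau\!\!\int_\mt\big(\overline{p_\delta(\tilde\eta)}-(\lambda+2\mu)\Div^{\tilde\bfphi}\tilde\bfv\big)\tilde\eta\dxt,
\]
the compensated-compactness step relying on the transformed div-curl lemma, Lemma~\ref{lem:divcurl}, applied on the subsequence provided by Skorokhod (compare \eqref{eq:comm}). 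To close the argument I need the renormalized continuity equation for the limit $(\tilde\eta,\tilde\bfv)$: since $\tilde\bfv\in L^2_tW^{1,2}_x$, $\tilde\eta\in L^\infty_tL^\Gamma_x$ with $\Gamma\geq 4$, I would follow the recipe of \cite{BFHZ} used in Section~\ref{subsec:strong}: transform back to the Eulerian variables $(\tilde\varrho,\tilde\bfu)$, establish the renormalized \emph{stochastic} continuity equation there via the DiPerna--Lions commutator lemma, and transform forward to obtain \eqref{eq:condrenlWB} for $\tilde\eta$. Testing the renormalized equations for $\tilde\eta_l$ and $\tilde\eta$ by $\theta(z)=z\ln z$ and combining with the flux identity and monotonicity of $p_\delta$ as in \eqref{8.15}--\eqref{8.14} yields $\tilde\eta_l\to\tilde\eta$ in $L^1(I\times\mt)$, hence $\overline{p_\delta(\tilde\eta)}=p_\delta(\tilde\eta)$, which completes the proof of Theorem~\ref{thm:maindelta}.
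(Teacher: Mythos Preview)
Your overall architecture is correct: the energy bounds, Jakubowski--Skorokhod construction, and the pathwise pressure estimate transfer from Section~\ref{s:4} without essential change, and the renormalised continuity equation for the limit can indeed be obtained as you describe. The genuine difficulty, however, lies precisely where you wave it through: the effective viscous flux identity.

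Your plan is to reproduce the frozen--coefficient argument of Subsection~\ref{subsec:strong} with $\tilde\Phi_{i,l}=-\nabla^{\tilde\bfphi_l(t_i)}(-\Delta^{\tilde\bfphi_l(t_i)})^{-1}(\tilde\eta_l-m_0)$ and to close via Lemma~\ref{lem:divcurl}. But recall that in \eqref{eq:comm} the passage to the limit was not based on the div--curl lemma at all; the paper says explicitly that Lemma~\ref{lem:divcurl} is not needed there, because the convolution $[\,\cdot\,]_l$ at \emph{fixed} $l$ supplies the missing compactness. When $l\to 0$ this crutch disappears, and the commutator structure required by Lemma~\ref{lem:divcurl} is broken by the very regularisation you are removing: the critical pairing now involves three distinct ``momenta'', namely $[\tilde\eta_l]_l\tilde\bfv_l$ from $\partial_t$ in the momentum equation, $\tilde\eta_l\tilde\bfv_l$ from $\partial_t\tilde\eta_l$ in the continuity equation, and $[\bfA^\top_{\tilde\bfphi_l}\tilde\eta_l\tilde\bfv_l]_l$ from the convective term. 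These do not line up into the symmetric form $\bfu_l\cdot\nabla^{\bfvarphi}(-\Delta^{\bfvarphi})^{-1}\Div^{\bfvarphi}\bfw_l-\bfw_l\cdot\nabla^{\bfvarphi}(-\Delta^{\bfvarphi})^{-1}\Div^{\bfvarphi}\bfu_l$, and the differences such as $[\tilde\eta_l]_l\tilde\bfv_l-\tilde\eta_l\tilde\bfv_l$ cannot be shown to vanish strongly without the very strong convergence of $\tilde\eta_l$ you are trying to establish. By contrast, in the $\delta\to 0$ limit of Section~5.2 the equations are already the clean system \eqref{i1Bdelta}--\eqref{i2Bdelta}, so the div--curl route is available there.

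The paper's proof circumvents this obstruction by a genuinely different mechanism: it rewrites the effective viscous flux through the material--derivative form $\tilde G_{l_n}=(-\Delta^{\tilde\bfphi_{l_n}})^{-1}\Div^{\tilde\bfphi_{l_n}}\big([\tilde\eta_{l_n}]_{l_n}\partial_t\tilde\bfv_{l_n}+[\bfA^\top_{\tilde\bfphi_{l_n}}\tilde\eta_{l_n}\tilde\bfv_{l_n}]_{l_n}\cdot\nabla\tilde\bfv_{l_n}\big)$, mollifies $\tilde\bfphi_{l_n}$ in time so that the operator is differentiable, and then replaces $\tilde\bfv_{l_n}$ by its space--time mollification $(\tilde\bfv_{l_n})_\sigma$. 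The key input is Lemma~\ref{ChMZ-lem}, which controls $\|\,[\tilde\eta_{l_n}]_{l_n}(\tilde\bfv_{l_n}-(\tilde\bfv_{l_n})_\sigma)\,\|_{L^1}$ uniformly in $l_n$; this converts the problematic products into a main part $\underline{\tilde G}^0_{l_n}$ with spatial compactness (so that $T_k(\tilde\eta_{l_n})\,\underline{\tilde G}^0_{l_n}$ passes to the limit directly) plus remainders of order $C_{\sigma'}k\sigma^\theta$ that are made $O(k^{-1})$ by choosing $\sigma'\approx k^{-2}$ and $\sigma$ small. The argument is then closed via the $L_k/T_k$ renormalisation and $k\to\infty$, not by the $z\ln z$ identity alone. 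In short, the missing idea in your sketch is a substitute for the div--curl cancellation that survives the asymmetric $[\,\cdot\,]_l$--regularisation; the paper supplies it through Lemma~\ref{ChMZ-lem} and the double mollification in $\sigma,\sigma'$.
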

\begin{proof}
%Except for the effective viscous flux identity,
The beginning of the proof is rather similar to the proof of Theorem \ref{thm:mainl} performed in the previous section. Thus we only sketch the main ideas. By Theorem  \ref{thm:mainl} we obtain a sequence of martingale solutions $$\big((\Omega,\mf,(\mf^l_t)_{t\geq0},\prst),\eta_l,\bfv_l,W_{l})$$
to  \eqref{i1Bl}--\eqref{i2Bl}. Note that one can assume that the probability space $(\Omega,\mf,\prst)$ is independent of $l$.
\begin{itemize}
\item First of all we can use the energy inequality \eqref{eq:enedW}
to deduce
\begin{align*}
\sup_{I}\|\eta_l\|_{L^\Gamma(\mt)}^\Gamma
+\sup_{I}\big\|\sqrt{[\eta_l]_l}\bfv_l\big\|^2_{L^2(\mt)}+\int_I\|\nabla^{\bfphi_l}\bfv_l\|^2_{L^2(\mt)}\dt&\leq\,C(\varrho_0,\bfq_0)
\end{align*}
uniformly in the sample space and uniformly in $l$. 
\item As in Proposition \ref{skorokhod} we obtain the tightness of the  corresponding laws, where the path space did not change. In particular, there is a nullsequence $(l_n)$ and a sequence of random variables with the same law such that
\begin{equation}\label{49a}
        \begin{cases}
%        \Tilde{\varrho}_{0,h_m} \to \Tilde{\varrho}_0 \,\, \text{in}\, L^{\gamma}(\T), \\
%        \Tilde{\varrho}_{0,h_m}\Tilde{\vec{u}}_{0,h_m} \to \Tilde{\vec{m}}_{0} \, \, \text{in}\, \, L^{\frac{2\gamma}{\gamma+1}}(\T),\\
%        \Tilde{ S}_{0,h_m} \to \Tilde{S_0} \,\, \text{in }\,\, C([0,T];W^{-4,2})\cap C_w(0,T; L^{\gamma}(\T)),\\
         %\color{red} \Tilde{ S}_{0,h_m} \to \overline{\Tilde{S_0}} \,\,
        %\text{in} \,\, L_{w^*}^{\infty}(\T;\mathcal{P}(A))\\
        \Tilde{\eta}_{l_n} \to \Tilde{\eta} \,\, \text{in }\,\,  C_{\rm{weak}}\big([0,T]; L^\Gamma(\mt)\big),\\
   [\Tilde{\eta}_{l_n}]_{l_n}  \Tilde{\bfv}_{l_n} \to \Tilde{\eta}\Tilde{\bfv} \,\, \text{in }\,\,  C_{\rm{weak}}\big([0,T]; L^{2\frac{\Gamma}{\Gamma+1}}(\mt)\big),\\
   \Tilde{\bfv}_{l_n} \rightharpoonup \Tilde{\bfv} \,\, \text{in }\,\,  L^2\big(0,T;W^{1,2}(\mt)),\\
     p_\delta( \tilde \eta_{l_n} ) \to  \overline{p_\delta(\tilde \eta)} \ \mbox{in}\ 
   \left( L^\infty(0,T; \mathcal{M}(\TN)), w^* \right) \\
         \tilde\bfphi_{l_{n}}\to\tilde\bfphi\,\, \text{in} \,\,  C^{\alpha}([0,T];C^{2}(\TN,\TN),\\
         \Tilde{W}_{l_{n}} \to \Tilde{W} \,\, \text{in} \,\,  C([0,T];R^{K}),\\
         \tilde\nu_{l_{n}}\to\tilde\nu\,\, \text{in} \,\, (L^{\infty}((0,T)\times\TN;{\rm Prob}(R^{1+N+N^{2}})),w^{*})
        \end{cases}
    \end{equation}
    $\Tilde{\p}$-a.s. as well as
    \[
    H( \Tilde\eta_{l_{n}} ,\tilde{\bfv}_{l_n}, \nabla\tilde{\bfv}_{l_n}) \rightharpoonup \overline{H(   \tilde\eta,\tilde{\bfv}, \nabla\tilde{\bfv})}\quad\text{in}\,\,L^{k}((0,T)\times\TN)
    \]
    holds $\tilde{\p}$-a.s. as  $n \to \infty$ for all $1 < k\leq\frac{\Gamma}{q_1}\wedge \frac{2}{q_2}$. 
    Here, we have used the fact that the limits of $\tilde \eta_{l_n}$ and $[\tilde \eta_{l_n}]_{l_n}$ in $C_{\rm{weak}}\big([0,T]; L^\Gamma(\mt)\big)$ 
    coincide.

    Finally, define again filtrations $(\tilde\mf^{l_n}_t)_{t\geq0}$ and $(\tilde\mf_t)_{t\geq0}$ to ensure the correct measurability.
\item Also the pressure estimate can be performed via the same strategy as in the previous section obtaining
\begin{equation}\label{49'}
    \int_0^{T} \int_{\mt} \tilde\eta_{l_n}^{\Gamma+1} \dx\ds \leq \,c(L,\mathbb Q,\varrho_0,\bfq_0),
\end{equation}
where the right hand side depends on $\omega$.
\end{itemize}

\medskip 
Now we must prove that the limit object
$$\big((\tilde\Omega,\tilde\mf,(\tilde\mf_t)_{t\geq0},\tilde\prst),\tilde\eta,\tilde\bfv,\tilde W)$$
solves the momentum equation (as for the continuity equation and the energy inequality this follows from the convergences above). The key is again  analysis of the effective viscous flux identity,
%\begin{align}\label{l22}
%\int_0^{\tau} \int_{\mt} &\big(p_\delta(\tilde\eta_{l_n})-(\lambda+2\mu)\Div^{\tilde\bfphi_{l_n}} \tilde{\bfv}_{l_n}\big)\,\tilde\eta_{l_n}\dxt\\&\longrightarrow \int_0^{\tau} \int_{\mt} \big( \overline{p_\delta(\tilde\eta)}-(\lambda+2\mu)\Div^{\tilde\bfphi} \tilde{\bfv}\big)\,\tilde\eta\dxt,\ 0 < \tau \leq T \nonumber
%\end{align}
%$\tilde{\mathbb P}$-a.s. as $n\rightarrow\infty$, 
which will performed in the remaining part of this subsection. The limit in the momentum equation follows similarly to the last section using standard properties of the convolution.

%To prove the  identity \eqref{l22}, let us consider the limit \( l \to 0 \) with large \( \Gamma \) and the augmented pressure 
%$p_\delta$. 
As in the last section we argue pathwise, i.e. the following computations (and constants) depend on $\tilde\omega\in\tilde\Omega$ which is chosen such that the convergences in \eqref{49a} hold.  
 The effective viscous flux at this stage is given by
\begin{align}\label{G1'}
\begin{aligned}
\tilde G_{l_n}&:=  (\lambda+2\mu)\div^{\tilde \bfphi_{l_n}} \tilde\bfv_{l_n} - p_\delta(\tilde\eta_{l_n}) \\&= (-\Delta^{\tilde\bfphi_{l_n}})^{-1} \div^{\tilde\bfphi_{l_n}} \left( \partial_t ([\tilde\eta]_{l_n} \tilde\bfv_{l_n}) + \div 
([\bfA^T_{\tilde\bfphi_{l_n}} \tilde\eta_{l_n} \tilde\bfv_{l_n}]_{l_n} \otimes \tilde\bfv_{l_n}) \right),\\
\tilde G&:=  (\lambda+2\mu)\div^{\tilde \bfphi} \tilde\bfv - \overline{p_{\delta}(\tilde\eta)} \\&= (-\Delta^{\tilde\bfphi})^{-1} \div^{\tilde\bfphi} \left( \partial_t (\tilde\eta \tilde\bfv) + \div 
([\bfA^T_{\tilde\bfphi} \tilde\eta \tilde\bfv \otimes \tilde\bfv) \right).
\end{aligned}
\end{align}
Alternatively, this can be expressed as
\begin{align}\label{G2'}
\begin{aligned}
  \tilde G_{l_n}&= (-\Delta^{\tilde\bfphi_{l_n}}_{l_n})^{-1} \div^{\tilde\bfphi_{l_n}} \left( [\tilde\eta_{l_n}]_{l_n} \partial_t \tilde\bfv_{l_n} + [\bfA^T_{\tilde\bfphi_{l_n}} \tilde\eta_{l_n} \tilde\bfv_{l_n}]_{l_n} \nabla \tilde\bfv_{l_n} \right),\\
  \tilde G&= (-\Delta^{\tilde\bfphi})^{-1} \div^{\tilde\bfphi} \left( \tilde\eta \partial_t \tilde\bfv + \bfA^\top_{\tilde\bfphi} \tilde\eta \tilde\bfv \nabla \tilde\bfv \right).
\end{aligned}
\end{align}
Here the terms in brackets can be interpreted in negative Sobolev spaces as a consequence of the corresponding momentum equation. By maximal regularity theory for elliptic equations this is sufficient to verify that the expressions for $\tilde G_{l_n}$ and  $\tilde G_{l_n}$ above are indeed measurable functions.
%
%\textcolor{green}{EF: Here, one should somehow discuss a git the meaning of $\bfv_t$...Dominic: like that?}
%
%\textcolor{red}
{Above we shall mention about the definition of $\tilde\eta \partial_t \tilde\bfv$.
It shall be treated as a distribution meant in the sense $\tilde\eta \partial_t \tilde\bfv= 
\partial_t( \tilde\eta  \tilde\bfv) - (\partial_t \tilde\eta)  \tilde\bfv$. In case of low integrability of the density the last definition is just formal, and it is shall be meant as in the definition below Lemma \ref{ChMZ-lem}, which bases on the regularisation of $\tilde \bfv$.}

Now, we introduce the cut-off function
\begin{align}\label{eq:Tk'}
	T_k(z):=k\,T\Big(\frac{z}{k}\Big),\quad z\in\R,\,\, k\in\mathbb N,
\end{align}
where $T$ is a smooth concave function on $\R$ such that $T(z)=z$ for $z\leq 1$ and $T(z)=2$ for $z\geq3$.
We consider the approximate continuity equation in the renormalised sense, cf. \eqref{eq:condrenlWB}, with 
\begin{align}\label{eq:Lk}
\theta(z)=L_k(z) = z \int_1^z \frac{T_k(r)}{r^2} \ {\rm d}r 
\end{align}
for some $k\in\N$.
Passing to the limit we obtain
\begin{equation}\label{aa'}
  \partial_t(\overline{L_k(\tilde\eta)}) + \div^{\tilde\bfphi} \big( \overline{L_k(\tilde\eta)} \tilde\bfv \big) +\overline{T_k(\tilde\eta) p_\delta(\tilde\eta)} = \overline{T_k(\tilde\eta) \tilde G}
\end{equation}
in the sense of distributions.

We perform the same steps for the limit equation and utilize the properties of renormalised solutions. The renormalised formulation can be derived as in \eqref{eq:ren:delta}, specifically going back to the original variables and applying the regularization procedure 
of DiPerna and Lions. This allows us to conclude
\begin{equation}\label{aa''}
  \partial_t (L_k(\tilde\eta)) + \div^{\tilde\bfphi} (L_k (\tilde\eta)\tilde\bfv) + \overline{T_k(\tilde\eta)}\, \overline{p_\delta(\tilde\eta)} = (\overline{T_k(\tilde\eta)} - T_k(\tilde\eta)) \div^{\tilde\bfphi} \tilde\bfv + \overline{T_k (\tilde\eta)} \tilde G.
\end{equation}

As we shall see in \eqref{58} below, comparing these two identities yields the desired strong convergence of \( \eta \).  At this stage, \( \eta \in L^{\Gamma+1}_{t,x} \) by \eqref{49'}, and thus we observe
\begin{equation}\label{l24}
  \int_0^\tau  \int_{\mt} (\overline{T_k(\tilde\eta)} - T_k ( \tilde\eta) )\div^{\tilde\bfphi} \tilde\bfv \dx \dt \to 0 \quad \text{as} \quad k \to \infty,\ 0 < \tau \leq T.
\end{equation}

The crucial step is to show
\begin{equation}\label{l25}
    \overline{T_k(\tilde\eta)\tilde G}-\overline{T_k(\tilde\eta)} \tilde G \to 0 \quad \text{as} \quad k \to \infty.
\end{equation}

%This is the main focus of our study. The limit (\ref{l22}) follows from (\ref{l24}) and (\ref{l25}). \textcolor{blue}{Ed: I am a bit confused here. In fact we don't need \eqref{l22}, right?}

The key problem is the low regularity of the operator $(-\Delta^{\tilde\bfphi_{l_n}})^{-1}  \div^{\tilde\bfphi_{l_n}}$
with respect to the time variable. Fortunately, thanks to (\ref{eq:3010}), we may approximate ${\tilde\bfphi}_{l_n}(t)$ by its mollification $\underline{\tilde\bfphi}_{l_n}(t)$ with respect to the time variable.
% aspect is the operator $(-\Delta^{\tilde\bfphi_{l_n}})^{-1}  \div^{\tilde\bfphi_{l_n}}$,
%which, in our case, cannot be composed with time differentiation. However, thanks to (\ref{eq:3010}), we are allowed to mollify ${\tilde\bfphi}_{l_n}(t)$ with respect to time
%and consider a smooth-in-time field $\underline{\tilde\bfphi}_{l_n}(t)$.
 We then replace
\begin{equation*}
     (-\Delta^{\tilde\bfphi_{l_n}})^{-1}  \div^{\tilde\bfphi_{l_n}} \quad \text{with} \quad (-\Delta^{\underline{\tilde\bfphi}_{l_n}})^{-1}  \div^{\underline{\tilde\bfphi}_{l_n}} 
\end{equation*}
so that, given $\sigma' > 0$, the matrices $\bfA_{\tilde{\bfphi}_{l_n}}$ and $\bfA_{\underline{\tilde\bfphi}_{l_n}}$ defined by (\ref{A-phi}) satisfy the estimate
\begin{equation*}
    \|\bfA_{\tilde{\bfphi}_{l_n}}-\bfA_{\underline{\tilde\bfphi}_{l_n}}\|_{L^\infty((0,T)\times\mt)} \leq \sigma',
\end{equation*}
recall \eqref{eq:3010}.
The matrix $\bfA_{\tilde{\bfphi}_{l_n}}$ is H\"older continuous in time, while
$\bfA_{\underline{\tilde\bfphi}_{l_n}}$ is smoothed out by convolution with a parameter proportional to a power of $\sigma'$. 

Now, introducing $\underline{\tilde G}_{l_n}$ as $\tilde G_{l_n}$
with $\underline{\tilde\bfphi}_{l_n}$ instead of ${\tilde\bfphi}_{l_n}$ in (\ref{G1'}) and (\ref{G2'}), we obtain uniformly in $n$
\begin{equation*}
    \|\tilde G_{l_n}-\underline{\tilde G}_{l_n}\|_{L^1(\mt \times (0,T))}\leq C\sigma'
\end{equation*}
as a consequence of \eqref{49a} and elliptic estimates as in \eqref{eq:2311}.
%Since $k$ is fixed, we can choose $\sigma'$ in terms of $k$.
Hence we obtain the $k$-dependent estimate
\begin{equation*}
    \int_0^T \int_{\mt} |T_k (\tilde\eta_{l_n}) (\tilde G_{l_n}-\underline{\tilde G}_{l_n})| \dx\dt \leq 
    C k \sigma'.
\end{equation*}
In general, taking $\sigma' \approx k^{-2}$ ensures that the right-hand side is of order $k^{-1}$. Thus
\eqref{l25} will follow from
\begin{equation*}%\label{l25'}
    \overline{T_k(\tilde\eta)\underline{\tilde G}}-\overline{T_k(\tilde\eta)} \underline{\tilde G} \to 0 \quad \text{as} \quad k \to \infty.
\end{equation*}

Hence, we need to proceed with the main parts of $\underline{\tilde G}_{l_n}$.
The first step is to show that
\begin{equation}\label{v-sigma}
    \|[\tilde\eta_{l_n} ]_{l_n}  (\tilde\bfv_{l_n} 
   - (\tilde\bfv_{l_n} )_\sigma)\|_{L^1((0,T)\times {\mt} )} \leq \varkappa(\sigma),
\end{equation}
where $(v_{l_n} )_\sigma$ is a regularization of $\tilde\bfv_{l_n} $ by convolution in time and space with a family of regularizing kernels, and
%in time and space and 
$\varkappa(\sigma)\rightarrow0$ as $\sigma\rightarrow0$. We remind the reader that $\sigma$ is different from $\sigma'$ used for the regularisation of $\tilde\bfphi$ above. 
In \eqref{v-sigma} the right-hand side does not depend on $k, \sigma',$ or $n$. The
desired estimate \eqref{v-sigma} follows from the following result which is proved in \cite[Lemma 12]{ChMZ} combined with \eqref{49a}and the balance of momentum and continuity, respectively.

\begin{Lemma}\label{ChMZ-lem}
	Let $\pi_\sigma$ denote the time and space mollifier with parameter $\sigma>0$. 
	Let $\eta \in L^\infty_tL^\gamma_x$, $\bfv\in L^2_t W^{1,2}_x$, 
	$\partial_t (\eta \bfv ) \in L^{q}_tW^{-1,q}_x$ and $\partial_t \eta   \in L^{q}_tW^{-1,q}_x$  with $ q>1 $.
	Then there exists a positive constant $\theta$ depending on $ \gamma $ and $ q $, such that
	\begin{equation}
		\|\eta |\bfv-\pi_\sigma\ast \bfv|\|_{L^1((0,T)\times\mt)}\leq C \sigma^\theta.
	\end{equation}
The constant $C$ depends on the norms of the quantities in the function spaces mentioned before.
\end{Lemma}
\smallskip  

Thanks to (\ref{v-sigma}) we are able to replace $\tilde\bfv_{l_n}$ by $(\tilde\bfv_{l_n})_\sigma$.
In order to do so we decompose $\underline{\tilde G}_{l_n}$ into the sum of
 \begin{align*}
  \underline{\tilde G}^0_{l_n}&:=  (-\Delta^{\underline {\tilde\bfphi}_{l_n} })^{-1}  \div^{\underline {\tilde \bfphi}_{l_n}} \left(  [\tilde\eta_{l_n} ]_{l_n}    \partial_t (\tilde\bfv_{l_n} )_\sigma +
    [\bfA^T{\underline {\tilde\bfphi}_{l_n} }\tilde\eta_{l_n}  \tilde\bfv_{l_n} ]_{l_n}  \cdot \nabla (\tilde\bfv_{l_n} )_\sigma \right),\\
      \underline{\tilde G}^1_{l_n}&:=(-\Delta^{\underline {\tilde\bfphi}_{l_n}})^{-1}  \div^{\underline {\tilde\bfphi}_{l_n} } \partial_t \left( [\tilde\eta_{l_n}]_{l_n} (\tilde\bfv_{l_n}-(\tilde\bfv_{l_n})_\sigma) \right),\\
  \underline{\tilde G}^2_{l_n}&:=(-\Delta^{\underline {\tilde\bfphi}_{l_n}})^{-1}  \div^{\underline {\tilde\bfphi}_{l_n} } \div (    [\bfA^T_{\underline {\tilde\bfphi}_{l_n} }\tilde\eta_{l_n}  \tilde\bfv_{l_n} ]_{l_n}  \otimes(\tilde\bfv_{l_n}-(\tilde\bfv_{l_n})_\sigma)),
\end{align*}
and similarly for $\underline{\tilde G}$.
We observe that $\underline{\tilde G}_{l_n}^0$ has compactness in space (as a consequence of \eqref{49a} and because of the mollification) and $T_k(\tilde \eta_{l_n})$ in time with negative values in space (because of the renormalised continuity equation), so as $n \to 0$ we get
\begin{equation*}
    \overline{T_k(\tilde \eta) \underline{\tilde G}^0}=\overline{T_k(\tilde \eta)} \underline{\tilde G}^0.
\end{equation*}
Then studying $T_k(\tilde\eta_{l_n} ) \underline{\tilde G}^1_{l_n}$ we have 
\begin{align*}
    \int_0^T \int_{\mt}& (-\Delta^{\underline {\tilde\bfphi}_{l_n}  })^{-1}  \div^{\underline {\tilde\bfphi}_{l_n} }  \partial_t \left( [\tilde\eta_{l_n} ]_{l_n}  ) (\tilde\bfv_{l_n} -(\tilde\bfv_{l_n} )_\sigma \right) T_k(\tilde\eta_{l_n} )  \phi \dx\dt\\&=
    \int_0^T \int_{\mt}  \left(  [\tilde\eta_{l_n} ]_{l_n}  (\tilde\bfv_{l_n} -(\tilde\bfv_{l_n} )_\sigma) \right) \cdot\partial_t (-\Delta^{\underline {\tilde\bfphi}_{l_n}  })^{-1}  \nabla^{\underline {\tilde\bfphi}_{l_n}  } (T_k(\tilde\eta_{l_n} ) \phi) \dx\dt
\end{align*}
for any test function $\phi\in \mathcal{D}( 0,T)$.
Due to \eqref{49a} we have (since $\underline {\tilde\bfphi_{l_n} } $ is smooth in time)
\begin{equation*}
    \partial_t (-\Delta_{\underline {\tilde\bfphi}_{l_n}  })^{-1}  \div_{\underline {\tilde\bfphi}_{l_n}  } T_k (\tilde\eta _{l_n}) 
    \in L^2(0,T;L^6(\mt))
\end{equation*}
uniformly in $n$ with a bound depending on $\sigma'$ and (linearly) on $k$, i.e.,
\begin{equation*}
    \|    \partial_t (-\Delta_{\underline {\tilde\bfphi}_{l_n}  })^{-1}  \div_{\underline {\tilde\bfphi}_{l_n}  } T_k (\tilde\eta _{l_n}) 
    \|_{L^2(0,T;L^6(\mt))} \leq C_{\sigma'}k,
\end{equation*}
with a constant $C_{\sigma'}$ that blows up as $\sigma'\rightarrow0$.
 This is a consequence of the renormalised continuity equation \eqref{eq:condrenlWB}, elliptic theory as in \eqref{eq:2311} and Sobolev's embedding $W^{1,2}_x\hookrightarrow L^6_x$. In addition, we have $[\tilde\eta_{l_n}]_{l_n}  (\tilde\bfv_{l_n}-(\tilde\bfv_{l_n})_\sigma) \in L^{\infty}_tL^{\frac{2\Gamma}{\Gamma+1}}_x$ uniformly in $n$ and $\sigma$
by \eqref{49a}. Thus, in combination with \eqref{v-sigma}, we conclude that $[\tilde\eta_{l_n}]_{l_n}  (\tilde\bfv_{l_n}-(\tilde\bfv_{l_n})_\sigma)$
has a small norm in $L^2_tL^{6/5}_x$ in terms of  $\sigma$.
%It means for given $\sigma'$ small, the right-hand side is expected to be large. 
Hence, reducing the value of $\theta$ as the case may be,
\begin{equation}\label{eq:0702}
    \bigg|\int_0^T \int_{\mt} T_k(\tilde\eta_{l_n}) \underline{\tilde G}_{l_n}^1 \phi \dx\dt\bigg| \leq C_{\sigma'}k\sigma^\theta.
\end{equation}
Given $k$, we fix $\sigma' \approx k^{-2}$
and choose $\sigma$ 
so small (depending on $k$) that the right-hand side is of order $k^{-1}$.
Finally, we have by \eqref{49a} (reducing possibly again the value of $\theta$)
\begin{align*}
 \int_0^T\Big\|\underline{\tilde G}^2_{l_n}\Big\|_{L^{\frac{4\Gamma}{3\Gamma+1}}(\mt)}\dt&\leq\,c\int_0^T\Big\|   [\bfA^T_{\underline {\tilde\bfphi}_{l_n} }\tilde\eta_{l_n}  \tilde\bfv_{l_n} ]_{l_n}  \otimes(\tilde\bfv_{l_n}-(\tilde\bfv_{l_n})_\sigma)\Big\|_{L^{\frac{4\Gamma}{3\Gamma+1}}(\mt)}\dt\\
&\leq\,c\int_0^T\|\tilde\eta_{l_n}  \tilde\bfv_{l_n}\|_{L^{\frac{2\Gamma}{\Gamma+1}}}\|\tilde\bfv_{l_n}-(\tilde\bfv_{l_n})_\sigma\|_{L^{\frac{4\Gamma}{\Gamma-1}}(\mt)}\dt\\
&\leq\,c\sigma^\theta\int_0^T\|\tilde\bfv_{l_n}\|_{W^{1,2}(\mt)}\dt\leq\,c\sigma^\theta
\end{align*}
using standard properties of the regularisation.
Note that we used $\frac{4\Gamma}{3\Gamma+1}>1$ and
$\frac{4\Gamma}{\Gamma-1}<6$. This allows us to prove a counterpart of \eqref{eq:0702} for $\underline{\tilde G}_{l_n}^2$.
Hence returning to \eqref{aa'} and \eqref{aa''} we obtain
\begin{equation*}
    \frac{\dd}{\dt}\int_{\mt} 
   \Big( \overline{(\tilde\eta \ln (\tilde\eta))_k } - (\tilde\eta \ln (\tilde\eta))_k \Big)\dx + \int_{\mt}\Big( \overline{p(\eta)\,T_k(\tilde\eta)} - \overline{p(\tilde\eta)}\, \overline{T_k(\tilde\eta)}\Big) \dx \leq \,Ck^{-1}.
\end{equation*}
%by (\ref{l24}) and (\ref{l25}).
%Then thanks to the sufficient integrablity of the density, it implies that $\overline{\tilde\eta \ln(\tilde\eta)} = \tilde\eta \ln (\tilde\eta) $
Thanks to the sufficient integrability of the density, we may perform the limit $k \to \infty$ to obtain $\overline{\tilde\eta \ln(\tilde\eta)} = \tilde\eta \ln (\tilde\eta) $. 
Thus we conclude
\begin{equation}\label{58}
    \tilde\eta^{l_n} \to \tilde\eta\quad \mbox{strongly in \ \ } L^2((0,T)\times\mt) \mbox{ \ \ as \ \ } 
    n\to \infty,
\end{equation} 
which finishes the proof of Theorem \ref{thm:maindelta}.
\end{proof}

\subsection{The vanishing artificial pressure limit}
Inn this final subsection we conclude the proof of Theorem \ref{thm:main}.
Given initial data $(\varrho_0,\bfq_0)\in L^\gamma(\mt)\times L^{\frac{2\gamma}{\gamma+1}}(\mt)$ with $\varrho_0\geq0$ a.a. such that $\varrho_0^{-1/2}\bfq_0\in L^2(\mt)$ we consider their regularization $(\varrho_0^\delta,\bfq_0^\delta)$ such that $\varrho_0^\delta\in L^\Gamma(\mt)$ with $\varrho_0^\delta\geq0$ a.a., %and $\bfq_0^\delta:={\varrho_\delta}\bfu_0^\delta$ 
 $(\varrho_0^\delta)^{-1/2}\bfq_0^\delta\in L^2(\mt)$ and
\begin{align*}
\int_{\mt}\bigg(P_\delta(\varrho_0^\delta)+\frac{1}{2}\frac{|\bfq_0^\delta|^2}{\varrho_0^\delta}\bigg)\dx\rightarrow \int_{\mt}\bigg(P(\varrho_0)+\frac{1}{2}\frac{|\bfq_0|^2}{\varrho_0}\bigg)\dx
\end{align*}
as $\delta\rightarrow0$.
With Theorem \ref{thm:maindelta} at hand we obtain a sequence of martingale solutions $$\big((\Omega,\mf,(\mf^\delta_t)_{t\geq0},\prst),\eta_\delta,\bfv_\delta,W_\delta)$$
to  \eqref{i1Bdelta}--\eqref{i2Bdelta}. The energy
inequality applies and thus we have
\begin{align}\label{eq:endelta}
\sup_{I}\Big(\delta\|\eta_\delta\|_{L^\Gamma(\mt)}^\Gamma+\|\eta_\delta\|_{L^\gamma(\mt)}^\gamma
+\big\|\sqrt{\eta_\delta}\bfv_\delta\big\|^2_{L^2(\mt)}\Big)+\int_I\|\nabla^{\bfphi_\delta}\bfv_\delta\|^2_{L^2(\mt)}\dt&\leq\,C(\varrho_0,\bfq_0)
\end{align}
uniformly in the sample space and uniformly in $\delta$.  
Changing $\Gamma$ to $\gamma$ in the definition of $\mathfrak X_{\eta}$ 
we obtain analogously to Proposition \ref{skorokhod}:
\begin{Proposition} \label{skorokhoddelta}
There exists a nullsequence $(\delta_n)$ and a complete probability space  $(\Tilde{\Omega}, \Tilde{\FF},\Tilde{\p})$ with $\mathfrak{X}$-valued random variables  $[\tilde\eta_{\delta_n},
\tilde\eta_{\delta_n} \tilde\bfv_{\delta_n},\tilde\bfv_{\delta_n},  \tilde\bfphi_{\delta_{n}},\tilde{W}_{{\delta_n}},\tilde\nu_{\delta_n}]$, $n\in\N$, $[\tilde\eta,
\tilde\eta\tilde\bfv,\tilde\bfv, \tilde\bfphi, \tilde{W},\tilde\nu]$
such that
\begin{itemize}
    \item [(a)] For all $n \in \N$ the law of $[\tilde\eta_{\delta_n},
\tilde\eta_{\delta_n} \tilde\bfv_{\delta_n},\tilde\bfv_{\delta_n},  \tilde\bfphi_{\delta_{n}},\tilde{W}_{{\delta_n}},\tilde\nu_{\delta_n}]$ on $\mathfrak{X}$ coincides with the law of $$[ \eta_{\delta_n}
,\eta_{\delta_n} \bfv_{\delta_n},\bfv_{\delta_n}, \bfphi_{\delta_{n}}, {W}_{{\delta_n}},  \delta_{[\eta_{\delta_n}, \bfv_{\delta_n},\nabla\bfv_{\delta_n}]}] ;$$ 
%    \item[(b)] The law of $$(\Tilde{\varrho},\Tilde{\vec{m}}, \Tilde{ S},\Tilde{\vec U},\Tilde{P},\Tilde{C},\Tilde{Q}, \Tilde{W})$$ is a Radon measure on $(\mathfrak{X},\mathscr{B}_{\mathfrak{X}});$
    \item[(b)] The sequence $[\tilde\eta_{\delta_n},
\tilde\eta_{\delta_n} \tilde\bfv_{\delta_n},\tilde\bfv_{\delta_n},  \tilde\bfphi_{\delta_{n}},\tilde{W}_{{\delta_n}},\tilde\nu_{\delta_n}]$
 converges $\Tilde{\p}$-a.s.  to  $[\tilde\eta,
\tilde\eta\tilde\bfv,\tilde\bfv, \tilde\bfphi, \tilde{W},\tilde\nu]$ in the topology of $\mathfrak{X}$;
%, i.e.,
%    \begin{equation}\label{49}
%        \begin{cases}
%%        \Tilde{\varrho}_{0,h_m} \to \Tilde{\varrho}_0 \,\, \text{in}\, L^{\gamma}(\T), \\
%%        \Tilde{\varrho}_{0,h_m}\Tilde{\vec{u}}_{0,h_m} \to \Tilde{\vec{m}}_{0} \, \, \text{in}\, \, L^{\frac{2\gamma}{\gamma+1}}(\T),\\
%%        \Tilde{ S}_{0,h_m} \to \Tilde{S_0} \,\, \text{in }\,\, C([0,T];W^{-4,2})\cap C_w(0,T; L^{\gamma}(\T)),\\
%         %\color{red} \Tilde{ S}_{0,h_m} \to \overline{\Tilde{S_0}} \,\,
%        %\text{in} \,\, L_{w^*}^{\infty}(\T;\mathcal{P}(A))\\
%        \Tilde{\eta}_{\delta_n} \to \Tilde{\eta} \,\, \text{in }\,\,  C_w\big([0,T]; L^\gamma(\mt)\big)\cap
%C([0,T]; W^{-k,2}(\mt)),\\
%       \Tilde{\varsigma}_{{\delta_n},L} \rightharpoonup \Tilde{\varsigma}_L \,\, \text{in }\,\,L^{\gamma+\Theta}((0,T)\times \mt),\quad \text{for all }L\in\N,\\
%   \Tilde{\eta}_n\Tilde{\bfv}_n \to \Tilde{\eta}\Tilde{\bfv} \,\, \text{in }\,\,  C_w\big([0,T]; L^{2\frac{\gamma}{\gamma+1}}(\mt)\big)\cap
%C([0,T]; W^{-k,2}(\mt)),\\
%   \Tilde{\bfv}_{\delta_n} \rightharpoonup \Tilde{\bfv} \,\, \text{in }\,\,  L^2\big(0,T;W^{1,2}(\mt)),\\
%         \Tilde{W}^{\delta_n}\to \Tilde{W} \,\, \text{in} \,\,  C([0,T];R^{K}),\\
% \tilde{\mathfrak t}^{\delta_n}_L\rightarrow  \tilde{\mathfrak t}_L\,\,\text{in}\,\,\R,\quad \text{for all }L\in\N,
%        \end{cases}
%    \end{equation}
%    $\Tilde{\p}$-a.s.
    \item[(c)] For any Carath\'eodory function $H =H(t,x,\eta,\bfw, \bfW)$ where $(t,x)\in (0,T)\times\TN$, $ (\eta,\bfw,\bfW)\in \R^{1+N+N^2}$, satisfying for some $q_1,q_2>0$ the growth condition
    \[
    H(t,x,\eta,\bfv, \bfW)\lesssim 1 +|\eta|^{q_1}+|\bfw|^{q_2}+|\bfW|^{q_2}
    \]
    uniformly in $(t,x)$, we denote by $\overline{H(\tilde\eta,\tilde{\bfv}, \nabla\tilde{\bfv})}(t,x) =\langle \tilde\nu_{t,x},H \rangle$. Then the following
    \[
    H(   \tilde\eta_{\delta_{n}} ,\tilde{\bfv}_{\delta_n}, \nabla\tilde{\bfv}_{\delta_n}) \rightharpoonup \overline{H(   \Tilde{\eta} ,\tilde{\bfv}, \nabla\tilde{\bfv})}\quad\text{in}\,\,L^{k}((0,T)\times\TN)\quad\text{for all }L\in\N
    \]
    holds $\tilde{\p}$-a.s. as  $n \to \infty$ for all $1 < k\leq\frac{\gamma}{q_1}\wedge \frac{2}{q_2}$.
    
\end{itemize}
\end{Proposition}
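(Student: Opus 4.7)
The plan is to follow closely the strategy used for Proposition \ref{skorokhod}, noting that the only structural change is that $\mathfrak X_\eta$ now reflects the weaker integrability $L^\gamma$ instead of $L^\Gamma$, so no improved integrability from an analogue of the pressure estimate \eqref{49'} is available at this stage. Accordingly, we rely entirely on the uniform bounds provided by \eqref{eq:endelta} and the equations themselves.

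First, I would establish tightness of the joint laws of
\[
\bigl[\eta_{\delta},\eta_{\delta}\bfv_{\delta},\bfv_{\delta},\bfphi_{\delta},W_{\delta},\delta_{[\eta_{\delta},\bfv_{\delta},\nabla\bfv_{\delta}]}\bigr]
\]
on $\mathfrak X$. For the velocity, \eqref{eq:w12} combined with \eqref{eq:3010a} and \eqref{eq:endelta} yields a uniform moment bound for $\nabla \bfv_{\delta}$ in $L^{2}((0,T)\times\TN)$ after taking expectations, which together with the generalized Korn inequality (\cite[Chapter~11, Theorem~11.23]{FeNo6A}) gives tightness in $\mathfrak X_{\bfv}$. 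For the density and momentum, the uniform bounds in $L^\infty_t L^\gamma_x$ and $L^\infty_t L^{2\gamma/(\gamma+1)}_x$ from \eqref{eq:endelta}, together with equicontinuity in time in a sufficiently negative Sobolev space derived from the weak formulations \eqref{i1Bdelta}--\eqref{i2Bdelta}, yield tightness in the respective $C_{\mathrm{weak}}$ spaces by a standard Arzel\`a--Ascoli argument, as carried out in \cite[Section~4.5]{BrFeHobook}. Tightness of $\bfphi_\delta$ in $C^\alpha([0,T];C^2(\TN,\TN))$ follows from the moment bound \eqref{eq:3010a} together with Kolmogorov's continuity criterion applied to \eqref{eq:flowsde}; tightness of $W_\delta$ is standard; and tightness of the Young measures $\delta_{[\eta_\delta,\bfv_\delta,\nabla\bfv_\delta]}$ in the weak-$*$ topology on $L^\infty((0,T)\times\TN;\mathrm{Prob}(R^{1+N+N^2}))$ follows from the uniform $L^\infty$ bound on the masses together with the Banach--Alaoglu theorem, arguing as in \cite[Section~4.5]{BrFeHobook}.

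Second, having established tightness on the (not fully metrizable) product space $\mathfrak X$, I would invoke Jakubowski's generalization of the Skorokhod representation theorem \cite{jakubow} (see also \cite[Section~2.8]{BrFeHobook}), which applies since each component space of $\mathfrak X$ is a quasi-Polish space admitting a countable family of continuous real-valued functions that separate points. This directly produces a nullsequence $(\delta_n)$, a complete probability space $(\tilde\Omega,\tilde\FF,\tilde\p)$, and random variables $[\tilde\eta_{\delta_n},\tilde\eta_{\delta_n}\tilde\bfv_{\delta_n},\tilde\bfv_{\delta_n},\tilde\bfphi_{\delta_n},\tilde W_{\delta_n},\tilde\nu_{\delta_n}]$ with identical joint laws on $\mathfrak X$, converging $\tilde\p$-almost surely to a limit $[\tilde\eta,\tilde\eta\tilde\bfv,\tilde\bfv,\tilde\bfphi,\tilde W,\tilde\nu]$ in the topology of $\mathfrak X$. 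The identification of the limit of $\tilde\eta_{\delta_n}\tilde\bfv_{\delta_n}$ with $\tilde\eta\,\tilde\bfv$, namely that the tensor product of the weak limits coincides with the limit of the tensor products, is obtained from the compactness of $\tilde\eta_{\delta_n}$ in $C_{\mathrm{weak}}([0,T];L^\gamma)$ and the weak convergence of $\tilde\bfv_{\delta_n}$ in $L^2(0,T;W^{1,2})$ together with the Aubin--Lions type embedding $L^\gamma\cap W^{-1,2}\hookrightarrow W^{-1,2}$ (compact since $\gamma>N/2$).

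Finally, to prove (c), I would apply the fundamental theorem of Young measures in the form of \cite[Chapter~2]{BrFeHobook}. The growth hypothesis on $H$ combined with the uniform bounds on $\tilde\eta_{\delta_n}$ in $L^\infty_t L^\gamma_x$ and on $(\tilde\bfv_{\delta_n},\nabla\tilde\bfv_{\delta_n})$ in $L^2_{t,x}$ yields equi-integrability of $H(\tilde\eta_{\delta_n},\tilde\bfv_{\delta_n},\nabla\tilde\bfv_{\delta_n})$ in $L^k((0,T)\times\TN)$ for $1<k\leq \frac{\gamma}{q_1}\wedge\frac{2}{q_2}$. Combined with the $\tilde\p$-a.s.\ weak-$*$ convergence $\tilde\nu_{\delta_n}\to\tilde\nu$ in $\mathfrak X_{\nu}$, this forces the weak $L^k$-limit of the compositions to coincide with $\langle\tilde\nu_{t,x},H\rangle$, which is precisely the claim. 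The main obstacle to watch is the interplay between the weak topologies in $\mathfrak X$ and the validity of Jakubowski's theorem on them; this is overcome exactly as in \cite[Section~4.5]{BrFeHobook} by observing that each factor is quasi-Polish, so no topological complications beyond those already handled there appear.
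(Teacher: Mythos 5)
Your proposal is correct and follows essentially the same route as the paper, which simply invokes the uniform bounds \eqref{eq:endelta} together with \eqref{eq:w12}, \eqref{eq:3010a}, the generalized Korn inequality and the tightness/Jakubowski machinery of Proposition \ref{skorokhod} and \cite[Section~4.5]{BrFeHobook}, with $\Gamma$ replaced by $\gamma$ in $\mathfrak X_\eta$. Your additional remarks on the identification of the limit momentum and on part (c) are the standard details implicit in that reference.
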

We define again filtrations $(\tilde\mf^{\delta_n}_t)_{t\geq0}$ and $(\tilde\mf_t)_{t\geq0}$ as the natural ones to ensure the correct measurability. By Proposition
\ref{skorokhoddelta} we can pass to the limit in the continuity equation and the energy inequality and obtain similarly to \eqref{eq:momdWB}
\begin{align}\label{eq:momdWBdelta}
\begin{aligned}
\int_{\mt} \tilde\eta \tilde\bfv&\cdot \bfpsi\dx\bigg|_{s=0}^{s=t} -\int_0^{t}\int_{\mt}\tilde\eta \tilde\bfv\otimes \tilde\bfv:\nabla^{\tilde\bfphi} \bfpsi\dxs
\\
&+\int_0^{t}\int_{\mt}\mathbb S(\nabla^{\tilde\bfphi} \tilde\bfv):\nabla^{\tilde\bfphi} \bfpsi \dxs-\int_0^{t}\int_{\mt}
\overline{p(\tilde\eta)}\,\Div^{\tilde\bfphi} \bfpsi \dxs
=0
\end{aligned}
\end{align}  
$\Tilde{\mathbb P}$-a.s. for all $\bfpsi\in C^\infty(\mt,R^{N})$. 

Now we prove higher integrability of the pressure which requires a modification of the pathwise method from  Section \ref{sec:pressure}.  We decompose again the time interval into random time intervals $(t_i,t_{i+1})$ such that for all $i\in\{1,\dots,M\}$
\begin{equation}\label{eq:1012delta}
    \|\bfA_{\tilde{\tilde\bfphi}_\delta(t)}-\bfA_{\tilde{\bfphi}_\delta(t_i)}\|_{L^\infty(\mt)} \leq \varkappa \mbox{ \ \ for all \ \ }  t \in (t_i,t_{i+1}).
\end{equation}
For each $i$ we define
\begin{equation*}%\label{Bogdelta}
\tilde\Phi_{i,\delta}=-\nabla^{\tilde\bfphi_\delta(t_i)}(-\Delta^{\tilde\bfphi_\delta(t_i)})^{-1}\bigg(\tilde\eta^\Theta_\delta-\int_{\mt}\tilde\eta_\delta^\Theta\dx\bigg)
\end{equation*}
where $\Theta\leq \tfrac{2}{N}\gamma-1$. We get by standard elliptic theory
\begin{align}\label{eq:2311delta}
%\begin{aligned}
\|\nabla\tilde\Phi_{i,\delta}\|_{L^p(\mt)}&\leq\,c(L)\big(\|\tilde\eta_\delta^\Theta\|_{L^p(\mt)}+1\big)
%,\quad p<N,\\
%\|\Phi_{i,n}\|_{L^{\infty}(\mt)}&\leq\,c(L)\big(\|\eta_n\|_{L^p(\mt)}+1\big),\quad p>N,
%\end{aligned}
\end{align}
uniformly in $n$ for $t \in (t_i,t_{i+1})$.

We compute
\begin{align*}
\int_{t_i}^{t_{i+1}}\int_{\mt}p_\delta(\tilde\eta_n)\bigg(\eta^\Theta_\delta-\int_{\mt}\tilde\eta_\delta^\Theta\dx\bigg)\dxs&=\int_{t_i}^{t_{i+1}}\int_{\mt}p_\delta(\tilde\eta_n)\div^{\tilde\bfphi(t_i)} \tilde\Phi_{i,\delta} \dxs\\
&=\int_{t_i}^{t_{i+1}}\int_{\mt}p_\delta(\tilde\eta_n)\div^{\tilde\bfphi} \Phi_{i,\delta} \dxs\\&+\int_{t_i}^{t_{i+1}}\int_{\mt}p_\delta(\tilde\eta_n)\big(\div^{\tilde\bfphi(t_i)}-\div^{\tilde\bfphi}\big) \tilde\Phi_{i,\delta} \dxs,
\end{align*}
where the second term is controlled by
\begin{align*}
\sup_{(t_i,t_{i+1})}&\|\bfA_{\tilde\bfphi_\delta}-\bfA_{\tilde\bfphi_\delta(t_i)}\|_{L^\infty(\mt)}\int_{t_i}^{t_{i+1}}\|p(\tilde\eta_\delta)\|_{L^{\frac{\gamma+\Theta}{\gamma}}(\mt)}\|\nabla \tilde\Phi_{i,\delta} \|_{L^{\frac{\gamma+\Theta}{\Theta}}(\mt)}\ds\\
&+\delta\sup_{(t_i,t_{i+1})}\|\bfA_{\tilde\bfphi_\delta}-\bfA_{\tilde\bfphi_\delta(t_i)}\|_{L^\infty(\mt)}\int_{t_i}^{t_{i+1}}\|\tilde\eta_\delta^\Gamma\|_{L^{\frac{\Gamma+\Theta}{\Gamma}}(\mt)}\|\nabla \tilde\Phi_{i,\delta} \|_{L^{\frac{\Gamma+\Theta}{\Theta}}(\mt)}\ds\\
&\leq\,\varkappa c(L)\int_{t_i}^{t_{i+1}}\Big(\|\tilde\eta_\delta\|_{L^{\gamma+\Theta}(\mt)}^{\gamma+\Theta}+\delta\|\tilde\eta_\delta\|_{L^{\Gamma+\Theta}(\mt)}^{\Gamma+\Theta}+1\Big)\ds
\end{align*}
as a consequence \eqref{eq:1012delta} and \eqref{eq:2311delta}.
For $L>0$ given we will choose $\varkappa$ so small that this term can be absorbed.
For the first term we have
\begin{align*}%\label{eq:2711delta}
\int_{t_i}^{t_{i+1}}\int_{\mt}
p_\delta(\tilde\eta_\delta)\,\Div^{\tilde\bfphi_\delta} \tilde\Phi_{i,\delta} \dxs&=\int_{\mt}\tilde\eta_\delta \tilde\bfv_\delta\cdot \tilde\Phi_{i,\delta}\dx\bigg|_{s=t_i}^{s=t_{i+1}}\\\nonumber&+\int_{t_i}^{t_{i+1}}\int_{\mt}\tilde\eta_\delta \tilde\bfv_\delta\cdot\partial_t\tilde\Phi_{i,\delta}\dxt\\& -\int_{t_i}^{t_{i+1}}\int_{\mt}\tilde\eta_\delta \tilde\bfv_\delta\otimes \tilde\bfv_\delta:\nabla^{\tilde\bfphi_\delta}\tilde\Phi_{i,\delta}\dxs\nonumber
\\
&+\int_{t_i}^{t_{i+1}}\int_{\mt}\mathbb S(\nabla^{\tilde\bfphi_\delta} \tilde\bfv_\delta):\nabla^{\tilde\bfphi_\delta} \tilde\Phi_{i,\delta} \dxs\nonumber\\
&=:(\tt{I})^{i,\delta}+\dots+(\tt{IV})^{i,\delta}\nonumber
\end{align*} 
by the approximate momentum equation.
%\begin{multline}\label{II-3}
% \int_{t_i}^{t_{i+1}} \int     \partial_t ((\frac{1}{n}+[\eta]_l ) v) \Phi dxdt =\\
% \left. \int     ((\frac{1}{n}+[\eta]_l ) v) \Phi dx\right|^{t_{i+1}}_{t_i} -  \int_{t_i}^{t_{i+1}} \int      ((\frac{1}{n}+[\eta]_l ) v) \partial_t \Phi\,  dx dt.
%\end{multline}
By \eqref{eq:2311delta} we obtain, decreasing the value of $\Theta$ if needed,
\begin{align*}
(\tt{I})^{i,\delta}
%&=\int_{\mt}\big(\tfrac{1}{n}+ [\eta_n]_l\big) \bfv_n\cdot \Phi_{i,n}\dx\bigg|_{s=0}^{s=T\wedge\mathfrak t_L}\\
&\leq  \sup_{(t_{i},t_{i+1})}\| \sqrt{\tilde\eta_\delta} \bfv_\delta\|_{L^2(\mt)} \|\sqrt{\tilde\eta_\delta}\|_{L^{2\gamma}(\mt)}\|\tilde\Phi_{i,\delta}\|_{L^{\frac{2\gamma}{\gamma-1}}(\mt)}\\
&\leq c(L)\sup_{(t_{i},t_{i+1})}\| \sqrt{\tilde\eta_\delta} \tilde\bfv_\delta\|_{L^2(\mt)} \|\tilde\eta_\delta\|_{L^{2\gamma}(\mt)}^{\frac{1}{2}}\big(\|\tilde\eta_\delta\|_{L^\gamma(\mt)}+1\big),
\end{align*}
which is uniformly bounded by \eqref{eq:endelta}.

For the term $(\tt II)^{i,\delta}$ the renormalised continuity equation yields
\begin{align*}
\begin{aligned}
   \partial_t \tilde\Phi_{i,\delta}
&=- \nabla^{\tilde\bfphi_\delta(t_i)} (-\Delta^{\tilde\bfphi_\delta(t_i)})^{-1}\div^{\tilde\bfphi_\delta}(\tilde\eta_\delta^\Theta\tilde\bfv_\delta)\\
 &+(1-\Theta)\nabla^{\tilde\bfphi_\delta(t_i)} (-\Delta^{\tilde\bfphi_\delta(t_i)})^{-1}(\tilde\eta^\Theta_\delta\Div^{\tilde\bfphi_\delta}\tilde\bfv_\delta)
\end{aligned}
\end{align*}
in $(t_i,t_{i+1})$. Hence we obtain by elliptic theory for $q_N:=\frac{N\gamma}{2\gamma-N}$
\begin{align*}
({\tt II})^{i,\delta}&\leq\int_{t_i}^{t_{i+1}}\|\tilde\eta_\delta\|_{L^\gamma(\mt)}\|\tilde\bfv_\delta\|^2_{L^{\frac{2N}{N-2}}(\mt)}\|\tilde\eta_\delta^\Theta\|_{L^{q_N}(\mt)}\ds\\
&\leq\,c(L)\sup_{(t_i,t_{i+1})}\|\tilde\eta_\delta\|_{L^\gamma(\mt)}^2\int_{t_i}^{t_{i+1}}\|\nabla^{\tilde\bfphi_\delta}\tilde\bfv_\delta\|^2_{L^2(\mt)}\ds\leq\,c(L,\mathbb Q,\varrho_0,\bfq_0)
\end{align*}
using \eqref{eq:2311delta}.

We can estimate $({\tt III})^{i,\delta}$ along the same lines.
The estimates for $({\tt IV})^{i,\delta}$ are a direct consequence of \eqref{eq:endelta}, \eqref{eq:2311delta} and the choice $2\Theta\leq \gamma$.

Combining everything and summing with respect to $i$ we have shown
\begin{equation*}%\label{eq:0412}
    \int_0^{T} \int_{\mt} \big(\delta\tilde\eta_\delta^{\Gamma+\Theta}+\tilde\eta_\delta^{\gamma+\Theta}\big)\dx\ds \leq \,c(L,\mathbb Q,\varrho_0,\bfq_0)
\end{equation*}
uniformly in $\delta$, but with a right hand side that depends on $\tilde\omega$.

It remains to show strong convergence of $\tilde \eta_{\delta}$. 
We recall the definition of $T_k$ from \eqref{eq:Tk'}.
 By Proposition \ref{skorokhoddelta} we clearly have $\tilde{\mathbb P}$-a.s.
\begin{align*}
 T_k(\tilde\eta_{\delta_n})&\rightarrow \tilde{T}^{1,k}\quad\text{in}\quad C_{\rm weak}(I;L^p( \mt))\quad\forall p\in[1,\infty),\\%\label{eq:Tk1'}\\
\big(T_k'(\tilde\eta_{\delta_n})\tilde\eta_{\delta}-T_k(\tilde\eta_{\delta})\big)\Div^{\tilde\bfphi_{\delta_n}} \tilde\bfv_{\delta_n}&\rightharpoonup\tilde{T}^{2,k}
\quad\text{in}\quad L^2(I\times\mt),%\label{eq:Tk2'}
\end{align*}
for some limit functions $\tilde{T}^{1,k}$ and $\Tilde{T}^{2,k}$.
Now we have to show that 
\begin{align}\label{eq:flux'}
\begin{aligned}
\int_0^{\tau}\int_{\mt}&\big( p_\delta(\tilde\eta_{\delta_n})-(\lambda+2\mu)\Div^{\tilde\bfphi_{\delta_n}} \tilde\bfv_{\delta_n}\big)\,T_k(\tilde\eta_{\delta_n})\dxt\\&\longrightarrow\int_0^{\tau}\int_{\mt} \big( \overline{p(\tilde\eta)}-(\lambda+2\mu)\Div^{\tilde\bfphi} \tilde\bfv\big)\,\tilde T^{1,k}\dxt
\end{aligned}
\end{align}
for all $0<\tau\leq T.$
In order to prove \eqref{eq:flux'} we test the approximate momentum equation in $(t_i,t_{i+1})$ by $\tilde\Phi_{i,\delta_n}:=(-\nabla^{\tilde\bfphi_{\delta_n}(t_i)}\Delta^{\tilde\bfphi_{\delta_n}(t_i)})^{-1}T_k(\tilde\eta_{\delta_n})$, while \eqref{eq:momdWBdelta} is tested by
$\tilde\Phi_i:=(-\Delta^{\tilde\bfphi(t_i)})^{-1}\nabla^{\tilde\bfphi(t_i)}\tilde T^{1,k}$. We must show for $i=1,\dots,M$ that $\tilde\p$-a.s.
\begin{align*}%\label{eq:2711deltaa}
\lim_{n\rightarrow\infty}\int_{\mt}\tilde\eta_{\delta_n} \bfv_{\delta_n}\cdot \tilde\Phi_{i,{\delta_n}}\dx\bigg|_{s=t_i}^{s=t_{i+1}}
&=\int_{\mt}\tilde\eta \tilde\bfv\cdot \tilde\Phi_{i}\dx\bigg|_{s=t_i}^{s=t_{i+1}}\\\lim_{n\rightarrow\infty}\int_{t_i}^{t_{i+1}}\int_{\mt}\Tilde\eta_{\delta_n} \tilde\bfv_{\delta_n}\cdot\partial_t\tilde\Phi_{i,\delta_n}&
+\tilde\eta_{\delta_n} \tilde\bfv_{\delta_n}\otimes \tilde\bfv_{\delta_n}:\nabla^{\tilde\bfphi_{\delta_n}}\tilde\Phi_{i,{\delta_n}}\Big)\dxt\nonumber\\
=\int_{t_i}^{t_{i+1}}\int_{\mt}\Big(\tilde\eta \tilde\bfv\cdot\partial_t\tilde\Phi_{i}
&+\tilde\eta \tilde\bfv\otimes \tilde\bfv:\nabla^{\tilde\bfphi}\tilde\Phi_{i}\Big)\dxt,%\label{eq:2711deltab}
\end{align*} 
where $\tilde\Phi_i=\nabla^{\tilde\bfphi(t_i)}(-\Delta^{\tilde\bfphi})^{-1}(\tilde\eta-\int_{\mt}\tilde\eta^\Theta\dx)$.
The first relation follows directly from Proposition \ref{skorokhoddelta}. As for the second one we compute
\begin{align*}
\int_{t_i}^{t_{i+1}}&\int_{\mt}\Big(\tilde\eta_{\delta_n}\tilde\bfv_{\delta_n}\cdot\partial_t\tilde\Phi_{i,n}+\tilde\eta_{\delta_n} \tilde\bfv_{\delta_n}\otimes \tilde\bfv_{\delta_n}:\nabla^{\tilde\bfphi_{\delta_n}}\tilde\Phi_{i,n}\Big)\dxs\\
&=-\int_{t_i}^{t_{i+1}}\int_{\mt}\tilde\eta_{\delta_n} \tilde\bfv_{\delta_n}\cdot\nabla^{\tilde\bfphi_{\delta_n}(t_i)}(-\Delta^{\tilde\bfphi_{\delta_n}(t_i)})^{-1}\Div^{\tilde\bfphi_{\delta_n}}(\tilde\eta_{\delta_n}\tilde\bfv_{\delta_n})\dxs\\
&+\int_{t_i}^{t_{i+1}}\int_{\mt}\tilde\eta_{\delta_n} \tilde\bfv_{\delta_n}\otimes \tilde\bfv_{\delta_n}:\nabla^{\tilde\bfphi_{\delta_n}}\nabla^{\tilde\bfphi_{\delta_n}(t_i)}(-\Delta^{\tilde\bfphi_{\delta_n}(t_i)})^{-1}\tilde\eta_{\delta_n}\dxs\\
&=-\int_{t_i}^{t_{i+1}}\int_{\mt}\tilde\eta_{\delta_n} \tilde\bfv_{\delta_n}\cdot\nabla^{\tilde\bfphi_{\delta_n}(t_i)}(-\Delta^{\tilde\bfphi_{\delta_n}(t_i)})^{-1}\Div^{\tilde\bfphi_{\delta_n}(t_i)}(\tilde\eta_{\delta_n}\tilde\bfv_{\delta_n})\dxs\\
&+\int_{t_i}^{t_{i+1}}\int_{\mt}\tilde\eta_{\delta_n} \tilde\bfv_{\delta_n}\otimes \tilde\bfv_{\delta_n}:\nabla^{\tilde\bfphi(t_i)}\nabla^{\tilde\bfphi_{\delta_n}(t_i)}(-\Delta^{\tilde\bfphi_{\delta_n}(t_i)})^{-1}\tilde\eta_{\delta_n}\dxt\\
&+\int_{t_i}^{t_{i+1}}\int_{\mt}\tilde\eta_{\delta_n} \tilde\bfv_{\delta_n}\cdot\nabla^{\tilde\bfphi_{\delta_n}(t_i)}(-\Delta^{\tilde\bfphi_{\delta_n}(t_i)})^{-1}(\Div^{\tilde\bfphi_{\delta_n}(t_i)}-\Div^{\tilde\bfphi_{\delta_n}})(\tilde\eta_{\delta_n}\tilde\bfv_{\delta_n})\dxs\\
&+\int_{t_i}^{t_{i+1}}\int_{\mt}\tilde\eta_{\delta_n} \tilde\bfv_{\delta_n}\otimes \tilde\bfv_{\delta_n}:\big(\nabla^{\tilde\bfphi_{\delta_n}}-\nabla^{\tilde\bfphi_{\delta_n}(t_i)}\big)\nabla^{\tilde\bfphi_{\delta_n}(t_i)}(-\Delta^{\tilde\bfphi_{\delta_n}(t_i)})^{-1}\tilde\eta_{\delta_n}\dxt.
\end{align*}
On account of \eqref{eq:1012delta} in combination with Proposition \eqref{skorokhoddelta} the last two terms are arbitrarily small.
Finally, we can apply div-curl lemma Lemma~\ref{lem:divcurl} to pass to the limit in the first two terms. Comparing again
$\tilde\bfphi(t_i)$ and $\tilde\bfphi(t)$ in $(t_i,t_{i+1})$ by means of \eqref{eq:1012delta} we conclude with \eqref{eq:flux'}

The next aim is to prove that $\tilde\eta$ is a renormalized solution. Using \eqref{eq:ren:delta} with $\theta=T_k$ and passing to the limit $\delta\rightarrow0$ we arrive at
%\begin{align}\label{eq:ren}
%\begin{aligned}
%-\int_I\int_{\mt}\Big(\theta(\varrho)\partial_t\psi
%+\theta(\varrho) \bfu\cdot\nabla\psi\Big)\dxt&=\int_{\mt}\theta(\varrho_0) \psi(0)\dxt\\
%&-\int_I\int_{\mt}\big(\theta(\varrho)-\theta'(\varrho)\varrho\big)\,\Div\bfu\psi\dxt
%\\&+\int_I\int_{\mt}\theta(\varrho) \mathbb Q\cdot\nabla_x \psi\,\partial_t \bfW\dxt
%\end{aligned}
%\end{align} 
%In order to do so it suffices to use the continuity equation and \eqref{eq:flux'} again on the whole space.
%Following line by line the arguments from \cite[Subsection 7.2]{BrSc} we have
\begin{align}\label{eq:Tk''}
\partial_t \tilde T^{1,k}+\Div^{\tilde\bfphi}\big( \tilde T^{1,k}\tilde\bfv\big)+\tilde T^{2,k}= 0
\end{align}
in the sense of distributions in $I\times\mt$. 
The next step is to show $\tilde\p$-a.s.
\begin{align}\label{eq.amplosc''}
\limsup_{n\rightarrow\infty}\int_0^{T}\int_{\mt}|T_k(\tilde\eta_{\delta_n})-T_k(\tilde\eta)|^{\gamma+1}\dxt\leq C_L,
\end{align}
where $C_L$ is a deterministic constant which does not depend on $k$. The proof of \eqref{eq.amplosc''} follows exactly the arguments from the classical setting
 (see \cite{feireisl1}) using \eqref{eq:flux'} and the uniform bounds on $\tilde\bfv_{\delta_n}$, cf. Proposition \ref{skorokhoddelta}. 
In order to proceed we need a renormalised version of \eqref{eq:Tk''}. 

% For that purpose we return to the original coordinates obtaining
% \begin{align*}
% \dd (\tilde T^{1,k}\circ\tilde\bfphi^{-1})+\Div\big( \tilde T^{1,k}\tilde\bfu\circ\tilde\bfphi^{-1}\big)\dt+\tilde T^{2,k}\circ\tilde\bfphi^{-1}\dt= \Div\big(\theta(\tilde T^{1,k}\circ\tilde\bfphi^{-1}) \mathbb Q\big)\circ\dd \tilde W.
% \end{align*}
% This equation can be renormalised by means of a standard smoothing procedure obtaining
% \begin{align*}
% \dd (\theta(\tilde T^{1,k}\circ\tilde\bfphi^{-1}))&+\Div\big( \theta(\tilde T^{1,k}\circ\tilde\bfphi^{-1})\tilde\bfu\big)\dt+\theta(\tilde T^{1,k}\circ\tilde\bfphi^{-1})\tilde T^{2,k}\circ\tilde\bfphi^{-1}\dt\\&=-(\theta'(\tilde T^{1,k}\circ\bfphi^{-1})\tilde T^{1,k}\circ\bfphi^{-1}-\theta(\tilde T^{1,k}\circ\bfphi^{-1}))\div\tilde\bfu+ \Div\big(\theta(T^{1,k}\circ\tilde\bfphi^{-1}) \mathbb Q\big)\circ\dd \tilde W
% \end{align*}
% in the sense of distributions
% for all $\theta\in C^1([0,\infty))$ with $\theta'(z)=0$ for all $z\geq M_\theta$.
% After a further change of coordinates we have

We have
\begin{align*}%\label{eq:Tk''ren}
\partial_t \theta(\tilde T^{1,k})+\Div^{\tilde\bfphi}\big( \theta(\tilde T^{1,k})\tilde\bfv\big)+\theta'(\tilde T^{1,k})\Tilde T^{2,k}=(\theta(\tilde T^{1,k})-\theta'(\tilde T^{1,k})\tilde T^{1,k})\Div^{\tilde\bfphi}\tilde\bfv.
\end{align*}
Based on \eqref{eq.amplosc''}
one can now prove that $\tilde T^{1,k}\rightarrow \tilde\eta$
and $\theta'(T^{1,k})T^{2,k}\rightarrow0$ as $k\rightarrow\infty$.
This only requires uniform bounds on $\div^{\tilde\bfphi}\tilde\bfv$ in $L^2(I\times\mt)$ and on $\tilde\eta$ in $L^\gamma(I\times\mt)$. They hold uniformly with respect to the sample space. Consequently it is not necessary to pass to expectations. Hence we have proved the renormalised continuity equation for the limit objects, i.e.,
 \begin{align*}%\label{eq:Tk''renlimit}
\partial_t \theta(\tilde\eta)+\Div^{\tilde\bfphi}\big( \theta(\tilde\eta)\tilde\bfv\big)=(\theta(\tilde\eta)-\theta'(\tilde\eta)\tilde\eta)\Div^{\tilde\bfphi}\tilde\bfv.
\end{align*}
In particular, we have
\begin{align*}
\int_{\mt} L_k(\tilde\eta)\dx\bigg|_{s=0}^{s=t}+\int_0^t\int_{\mt}T_k(\tilde\eta)\,\Div^{\tilde\bfphi}\tilde\bfv\dx\ds=0,
\end{align*}
where $L_k$ is defined in \eqref{eq:Lk}.
Similarly, passing to the limit in the renormalised
continuity equation on the approximate level with the choice $\theta(\eta)=\eta L_k(\eta)$ yields
\begin{align*}
\int_{\mt} \overline{\tilde\eta L_k(\tilde\eta)}\dx\bigg|_{s=0}^{s=t}+\int_0^t\int_{\mt}\overline{T_k(\tilde\eta)\,\Div^{\tilde\bfphi}\tilde\bfv}\dx\ds=0.
\end{align*}
Comparing both and using \eqref{eq:flux'} proves 
\begin{align*}
\int_{\mt} \big(\overline{\tilde\eta L_k(\tilde\eta)}-\tilde\eta L_k(\tilde\eta)\big)(t)\dx\leq \int_0^t\int_{\mt}(\overline{T_k(\tilde\eta)}-T_k(\tilde\eta)\big)\div^{\tilde\bfphi}\tilde\bfv\dxs,
\end{align*}
where the right-hand side converges to 0 as $k\rightarrow\infty$ by \eqref{eq.amplosc''}.
Hence we have $\overline{\tilde\eta L_k(\tilde\eta)}(t)=\big(\tilde\eta L_k(\tilde\eta)\big)(t)$
and thus $\tilde\eta_{\delta_n}(t)\rightarrow\tilde\eta(t)$ in $L^1(\mt)$ by convexity of the mapping $\tilde\eta\mapsto \tilde\eta\log(\tilde\eta)$.  This proves $\overline{p(\tilde\eta)}=p(\tilde\eta)$.
This finishes the proof of Theorem \ref{thm:main}.

\section*{Compliance with Ethical Standards}\label{conflicts}

\smallskip
\par\noindent 
{\bf Funding}. 
D.B. has been funded by Grant BR 4302/3-1 (525608987) of the German Research Foundation (DFG) within the framework of the priority research program SPP 2410 and by Grant BR 4302/5-1 (543675748) of the German Research Foundation (DFG).

The research of E.F.~leading to these results 
has received funding from
the Czech Sciences Foundation (GA\v CR), Grant Agreement
24-11034S.
The Institute of Mathematics of the Academy of Sciences of
the Czech Republic is supported by RVO:67985840.

M.H. has received funding from the European Research Council (ERC)
under the European Union’s Horizon 2020 research and innovation programme (grant agreement
No. 949981).

P.B.M. has been partially supported by the National Science Centre grant no.  2022/45/B/ST1/03432 (Opus).

\smallskip
\par\noindent
{\bf Conflict of Interest}. The authors declare that they have no conflict of interest.

\smallskip
\par\noindent
{\bf Data Availability}. Data sharing is not applicable to this article as no datasets were generated or analysed during the current study.

\end{document}